\newcommand{\N}{{\mathbb N}}
\newcommand{\Z}{{\mathbb Z}}
\newcommand{\R}{{\mathbb R}}
\newcommand{\C}{{\mathbb C}}
\newcommand{\cM}{{\mathcal M}}
\newcommand{\cF}{{\mathcal F}}
\newcommand{\cL}{{\mathcal L}}
\newcommand{\cD}{{\mathcal D}}
\newcommand{\eli}{\ell_\infty(\mathbb{Z})}
\newcommand{\smt}{ S(\cM,\tau)}
\DeclareMathOperator{\tr}{tr}
\DeclareMathOperator{\rank}{rank}
 \newtheorem{thm}{Theorem}[section]
\newtheorem{cor}[thm]{Corollary}
\newtheorem{lem}[thm]{Lemma}
\theoremstyle{definition}
\newtheorem{defn}[thm]{Definition}
\theoremstyle{remark}
\newtheorem{rem}[thm]{Remark}
\numberwithin{equation}{section}
\title[Pietsch correspondence]{Pietsch correspondence for symmetric functionals on Calkin operator spaces associated with  semifinite von Neumann algebras}
\author[Galina Levitina {\protect \and} Alexandr Usachev]{Galina Levitina {\protect
\and} Alexandr Usachev}
\address{Galina Levitina, School of Mathematics and Statistics, The University of New South Wales, 2052, NSW, Australia.}
\email{g.levitina@unsw.edu.au}
\address{Alexandr Usachev, Central South University, Hunan, China.}
\email{a.usachev.@csu.edu.cn}
\date{\today}
\begin{document}
\begin{abstract}
In this paper we extend the Pietsch correspondence for ideals of compact operators and traces on them to the semifinite setting. We prove that a shift-monotone space $E(\Z)$ of sequences indexed by $\Z$ defines a Calkin space $E(\cM,\tau)$ of $\tau$-measurable operators affiliated with a semifinite von Neumann algebra $\cM$ equipped with a faithful normal semifinite trace $\tau$. Furthermore, we show that shift-invariant functionals on $E(\Z)$ generate symmetric functionals on $E(\cM,\tau)$. In the special case, when the algebra $\cM$ is atomless or atomic with atoms of equal trace, the converse also holds and we have a bijective correspondence between all shift-monotone spaces $E(\Z)$ and Calkin spaces $E(\cM,\tau)$ as well as a bijective correspondence between shift-invariant functionals on $E(\Z)$ and symmetric functionals on $E(\cM,\tau)$. The bijective correspondence $E(\Z)\leftrightarrows E(\cM,\tau)$ extends to a correspondence between complete symmetrically $\Delta$-normed spaces $E(\cM,\tau)$ and complete $\Delta$-normed shift-monotone spaces $E(\Z)$. 
\end{abstract}

\maketitle

\section{Introduction}

Let $H$ be a complex separable Hilbert space and let $\cM$ be an atomless (or atomic)  semifinite von Neumann algebra acting on $H$ equipped with a faithful normal semifinite trace $\tau$. Let $S(\cM,\tau)$ be the $*$-algebra of all $\tau$ measurable operators affiliated with $\cM$ (see Section \ref{prel} for the precise definition). In the special case when $\cM=B(H)$ is the type $I$ von Neumann algebra of all bounded operators on $H$ with $\tau$ given by the classical trace $\tr$, the algebra $S(\cM,\tau)$ coincides with the algebra $B(H)$ itself.

The classical Calkin correspondence states that there is a bijective correspondence $E(0,\infty)\leftrightarrows E(\cM,\tau)$ between commutative Calkin function spaces $E(0,\infty)$ on $(0,\infty)$ and noncommutative Calkin spaces $E(\cM,\tau)$ of $\tau$-measurable operators (see Definition \ref{Calkin_dfn}). Denoting by $\mu(A)$ the generalised singular value function of a $\tau$-measurable operator $A$, this correspondence is given by 
\begin{align*}
	E(\cM,\tau)&:=\{X\in \smt: \mu(X)\in E(0,\infty)\},\\
	E(0,\infty)&:=\{f\in S(0,\infty): \mu(f)=\mu(X) \text{ for some } X\in E(\cM,\tau)\},
\end{align*}
where $S(0,\infty)$ is the commutative analogue of $S(\cM,\tau)$ (see \eqref{S}).
In the special case, when $\cM=B(H)$ this correspondence was introduced in \cite{Calkin}.

For the case when $\cM=B(H)$, A. Pietsch suggested in \cite{P_trI} a new approach to the construction of (two-sided) ideals of compact operators and traces on these ideals. Namely, based on his ideas going back to 80th \cite{P90}, Pietsch introduced so-called shift-monotone sequence spaces $E(\Z_+)$, defined corresponding two-sided ideals by setting 
$$E(H):=\{X\in B(H): \{\mu(2^n, X)\}_{n\geq 0}\in E(\Z_+)\},$$
and showed that this correspondence between shift-monotone spaces and two-sided ideals of compact operators is bijective. 
A. Pietsch in \cite{P_trI} also showed that the correspondence $E(H)\leftrightarrows E(\Z_+)$ extends to a correspondence of quasi-norms on $E(H)$ and $E(\Z_+)$. It was also shown in \cite{LPSZ} that this correspondence preserves completeness. 

In the present paper we establish semifinite version of Pietsch correspondence for two-sided ideals and traces on them. 
In particular, we answer affirmatively  A. Pietsch's question from the conference `Singular traces and their application' (Luminy, 2012).
We show in Theorem \ref{thm_corr_nc} that there is a bijective correspondence between Calkin spaces $E(\cM,\tau)$ and shift-monotone spaces $E(\Z)$ of sequences indexed by $\Z$. 
We also prove semifinite version of the  correspondence of $\Delta$-norms on the associated spaces $E(\cM,\tau)\leftrightarrows E(\Z)$ (see Theorem \ref{thm_complete}) which preserves completeness too.

The primary aim of the present paper is the construction of traces and symmetric functionals on Calkin spaces. This story was instigated by J. Dixmier in 1966, who constructed the first example of singular (that is, non-normal) traces (that is, unitarily invariant linear functionals) on Lorentz spaces in $B(H)$ \cite{D}. We refer to the book \cite{LSZ} for detailed information. This traces (termed Dixmier traces) have found applications in the A.~Connes Noncommutative Geometry \cite{C_book}. They play the role of the integral in the noncommutative context and are used to recover the dimension, measure and other geometric characteristics of manifolds (see recent survey \cite{LSZ_survey}).

Calkin correspondence has been successfully used to construct symmetric functionals on $E(\cM,\tau)$. Recall that a linear functional $\phi$ on $E(\cM,\tau)$ is said to be symmetric if $\phi(X)=\phi(Y)$ for all $0\leq X,Y\in E(\cM,\tau)$ with $\mu(X)=\mu(Y)$. If $E(0,\infty)\leftrightarrows E(\cM,\tau)$ are associated spaces, then (see e.g. \cite[Theorem 4.4.1]{LSZ}) any symmetric functional $\lambda$ on $E(0,\infty)$ gives rise to a symmetric functional on $E(\cM,\tau)$ by the formula
\begin{equation}\label{intro_trace_construction}
	\phi(X)=\lambda(\mu(X)), \quad 0\leq X\in E(\cM,\tau).
\end{equation}
However, the proof that the functional $\phi$ is linear is highly nontrivial and relies on recently introduced notion of uniform submajorisation \cite{KS08b}. In the particular case, when $\cM=B(H)$, this construction gives all traces  on a two-sided ideal $E(H)$, since for this algebra any trace is necessarily symmetric functional. In general, this is not the case (see Remark \ref{rem_from_trace_not_shift} below).

A significant advantage of Pietsch construction is that any trace on $E(H)$ can be easily constructed from a shift-monotone functional on $E(\Z_+)$.
Namely, \cite{P_trIII} A. Pietsch showed that an operator $X\in B(H)$ belongs to an ideal $E(H)$ if and only if there exists so-called $E(\Z_+)$-dyadic representation $X=\sum_{k=0}^\infty X_k$, with $X_k$  being finite-rank operators with $\rank(X_k)\leq 2^k$ and
$\Big\{\Big\|X-\sum_{k=0}^nX_k\Big\|_\infty\Big\}_{n\in\Z_+}\in E(\Z)$. 
Then for a $\frac12 S_+$-invariant functional $\theta$ be on $E(\Z_+)$, where $S_+$ is the right-shift operator, setting
$$\phi(X)=\theta\Big\{\frac1{2^k}\tr(X_k)\Big\}_{k\in\Z_+}, \quad X\in E(H),$$
where $X=\sum_{k=0}^\infty X_k$ is an $E(\Z_+)$-dyadic representation of $X$, one can define a bijective correspondence between all $\frac12 S_+$-invariant functionals on $E(\Z_+)$ and all trace $\phi$ on $E(H)$. The proof of this result is elegant and relies only on classical tools of operator theory. 

A slight modification of A. Pietsch approach allows to correspond traces on the weak-trace class ideal in $B(\mathcal H)$ to shift invariant functionals $l_\infty$~\cite{SSUZ}. This approach enabled the solution of several open problems in the theory of singular traces.

In this paper we introduce a similar notion of $E(\Z)$-dyadic representation $X=\sum_{k\in\Z} X_k$ of an operator $X\in E(\cM,\tau)$ and prove in Corollary \ref{cor_one-to-one} that the formula 
$$\phi(X)=\theta\{\frac1{2^k} \tau(X_k)\}_{k\in\Z},$$
defines a bijective correspondence between all symmetric functionals $\phi$ on $E(\cM,\tau)$ and all $\frac 12 S_+$-invariant functionals $\theta$ on $E(\Z)$ (see Corollary \ref{cor_one-to-one} below).

In the case of an arbitrary von Neumann algebra there are two types of symmetric functionals: symmetric functionals supported at infinity and symmetric functionals supported at zero (see Definition \ref{SF}). The first examples of (Dixmier) symmetric functionals supported at zero were given in \cite{GI1995} (see also \cite{DPSS}). Whereas symmetric functionals supported at infinity are related to (local) dimension in the noncommutative formulation, the ones supported at zero are related to (asymptotic) dimension and Novikov-Shubin numbers \cite{GI2000}.

The advantage of our approach (compared to that of~\cite{DPSS} and \cite{GI1995}) is that we are able to construct all symmetric functionals by a single formula. Specialising this formula we obtain symmetric functionals supported at infinity and those supported at zero.

The structure of the paper is the following. In Section \ref{prel} we gather all necessary preliminaries on  the theory of noncommutative integration. In Section \ref{sec_Pietsch_corr} we introduce the notion of shift-monotone sequence spaces on $\Z$ and prove that there is a bijective correspondence between these spaces and Calkin spaces on a semifinite atomless (or atomic) von Neumann algebras. In Section \ref{sec_Pie_corr_traces} we prove bijective correspondence between all symmetric functionals on $E(\cM,\tau)$ and all $\frac12 S_+$-invariant functionals on $E(\Z)$ for associated spaces $E(\cM,\tau)\leftrightarrows E(\Z)$. In Section \ref{sec_norms} we extend the correspondence $E(\cM,\tau)\leftrightarrows E(\Z)$ to a correspondence of $\Delta$-norm of respective spaces and show that this correspondence preserves completeness.

\section{Preliminaries}\label{prel}

In this section, we recall main notions of the theory of noncommutative integration.

In what follows,  $H$ is a complex separable Hilbert space and $B(H)$ is the
$*$-algebra of all bounded linear operators on $H$, equipped with the uniform norm $\|\cdot\|_\infty$, and
$\mathbf{1}$ is the identity operator on $H$. Let $\mathcal{M}$ be
a semifinite von Neumann algebra on $H$, equipped with a faithful normal semifinite trace $\tau$. In the case when $\tau$ is  a finite trace, we assume that it is a state, that is $\tau(\mathbf1)=1$. 

For details on von Neumann algebra
theory, the reader is referred to e.g. \cite{Dix, KR1, KR2}
or \cite{Ta1}. General facts concerning measurable operators may
be found in \cite{Ne, Se} (see also \cite[Chapter
IX]{Ta2} and the forthcoming book \cite{DPS}). For the convenience of the reader, some of the basic definitions are recalled here.

\subsection{The algebra of $\tau$-measurable operators}

A linear operator $X:\mathfrak{D}\left( X\right) \rightarrow H $,
where the domain $\mathfrak{D}\left( X\right) $ of $X$ is a linear
subspace of $H$, is said to be {\it affiliated} with $\mathcal{M}$
if $YX\subseteq XY$ for all $Y\in \mathcal{M}^{\prime }$, where $\mathcal{M}^{\prime }$ is the commutant of $\mathcal{M}$. A linear
operator $X:\mathfrak{D}\left( X\right) \rightarrow H $ is termed
{\it measurable} with respect to $\mathcal{M}$ if $X$ is closed,
densely defined, affiliated with $\mathcal{M}$ and there exists a
sequence $\left\{ P_n\right\}_{n=1}^{\infty}$ in the lattice $P\left(\mathcal{M}\right)$  of all
projections of $\mathcal{M}$, such
that $P_n\uparrow \mathbf{1}$, $P_n(H)\subseteq\mathfrak{D}\left(X\right) $
and $\mathbf{1}-P_n$ is a finite projection (with respect to $\mathcal{M}$)
for all $n$. It should be noted that the condition $P_{n}\left(
H\right) \subseteq \mathfrak{D}\left( X\right) $ implies that
$XP_{n}\in \mathcal{M}$. The collection of all measurable
operators with respect to $\mathcal{M}$ is denoted by $S\left(
\mathcal{M} \right).$ It is a unital $\ast $-algebra
with respect to strong sums and products (denoted simply by $X+Y$ and $XY$ for all $X,Y\in S\left( \mathcal{M%
}\right) $).

Let $\mathcal{M}$ be a
semifinite von Neumann algebra equipped with a faithful normal
semifinite trace $\tau$.
An operator $X\in S\left( \mathcal{M}\right) $ is called $\tau-$measurable if there exists a sequence
$\left\{P_n\right\}_{n=1}^{\infty}$ in $P\left(\mathcal{M}\right)$ such that
$P_n\uparrow \mathbf{1},$ $P_n\left(H\right)\subseteq \mathfrak{D}\left(X\right)$ and
$\tau(\mathbf{1}-P_n)<\infty $ for all $n.$ The collection of all $\tau $-measurable
operators is a unital $\ast $-subalgebra of $S\left(
\mathcal{M}\right) $ and is denoted by $S\left( \mathcal{M}, \tau\right)
$. It is well known that an operator $X$ belongs to $S\left(
\mathcal{M}, \tau\right) $ if and only if $X\in S(\mathcal{M})$
and there exists $\lambda>0$ such that $\tau(E^{|X|}(\lambda,
\infty))<\infty$, where $E^{Y}(a,\infty), a\in\R,$ denotes the spectral projection of a self-adjoint operator $Y\in S(\cM,\tau)$. Alternatively, an operator $X$
affiliated with $\mathcal{M}$ is  $\tau-$measurable (see
\cite{FK}) if and only if
$$\tau\left(E^{|X|}\bigl(n,\infty\bigr)\right)\to 0,\quad n\to\infty.$$

Now, let us recall the definition of the measure topology on $S(\cM,\tau)$. For $0<\varepsilon ,\delta \in \mathbb{R}$, the set
$V\left( \varepsilon ,\delta \right)$ is defined as follows:
\begin{equation*}
	V\left( \varepsilon ,\delta \right) =\left\{ X\in S\left(\cM, \tau \right)
	:\exists \;P\in P\left( \mathcal{M}\right) \text{ s. t. }\left\Vert
	XP\right\Vert _{\infty}\leq \varepsilon ,\tau \left( P^{\bot
	}\right) \leq \delta \right\} \text{.}
\end{equation*}%
The sets $\left\{ V\left( \varepsilon ,\delta \right)
:\varepsilon ,\delta >0\right\} $ form neighbourhood base at zero for the
measure topology $t_\tau$ on $S(\cM,\tau)$. It is known, that $(S(\cM,\tau),t_\tau)$ is a complete Hausdorff topological $*$-algebra.

For any closed and densely defined linear operator $X$
the \emph{null projection} $n(X)=n(|X|)$ is the projection onto its kernel $\mbox{Ker} (X)$, the \emph{range projection } $r(X)$ is the projection onto the closure of its range $\mbox{Ran}(X)$ and the \emph{support projection} $s(X)$ of $X$ is defined by $s(X) ={\bf{1}} - n(X)$. Note that the support projections is the smallest projection in $\cM$, such that $Xs(X)=X.$  Note that for $X,Y\in \smt$, we have that $s(X+Y)\leq s(X)\vee s(Y)$, where $p\vee q$ denotes the supremum of two projections $p,q\in P(\cM)$.
The two-sided ideal $\cF(\cM,\tau)$ in $\cM$ consisting of all elements of $\tau$-finite rank is defined by setting
$$\cF(\cM,\tau) =\{X \in \cM: \tau(r(X)) <\infty \} =\{X \in \cM: \tau(s(X)) <\infty \}. $$

\subsection{Singular value function and its properties}
From now on we assume that $\mathcal M$ is a semifinite von Neumann  algebra $\mathcal M$ equipped
with a faithful normal semi-finite trace $\tau$.

The generalized singular value function $\mu(X):t\rightarrow \mu(t,X)$, $t>0$,  of an operator $X\in \smt$
is defined by setting
\begin{equation}\label{def_mu}
	\mu(t,X)=\inf\{\|XP\|:\ P=P^*\in\mathcal{M}\mbox{ is a projection,}\ \tau(\mathbf{1}-P)\leq t\}.
\end{equation}

There exists an equivalent definition which involves the
distribution function of the operator $X$. For every self-adjoint
operator $X\in S(\mathcal{M},\tau),$ setting
$$d_X(t)=\tau(E^{X}(t,\infty)),\quad t>0,$$
we have (see e.g. \cite{FK} and \cite{LSZ})
$$\mu(t,X)=\inf\{s\geq0:\ d_{|X|}(s)\leq t\}.$$
For $X,Y\in S(\cM,\tau)$ we have
\begin{equation}\label{sum_rear}
	\mu(2t,X+Y)\leq \mu(t,X)+\mu(t,Y), \quad t>0.
\end{equation}

In the special case, when $\mathcal{M}=L_\infty(0,\infty)$ is the von Neumann algebra of all
Lebesgue measurable essentially bounded functions on $(0,\infty)$ acting via multiplication on the Hilbert space
$\mathcal{H}=L_2(0,\infty)$, with the trace given by integration
with respect to Lebesgue measure $m$, the algebra $S(\cM,\tau)$ can be identified with the algebra
\begin{equation}\label{S}
	S(0,\infty)=\{f \text{ is measurable:} \ \exists A\in\Sigma, m((0,\infty)\setminus A)<\infty, f\chi_A\in L_\infty(0,\infty)\},
\end{equation}
where $\chi_{A}$ denotes the characteristic function of a set $A\subset(0,\infty)$.
In this case, the measure topology on $S(\cM,\tau)$ corresponds to the usual topology of convergence in measure on $S(0,\infty)$. The singular value function $\mu(f)$ defined above is precisely the
decreasing rearrangement $f^*$ of the function $f\in S(0,\infty)$ given by
$$f^*(t)=\inf\{s\geq0:\ m(\{|x|\geq s\})\leq t\}.$$

If $\mathcal{M}=B(H)$ and $\tau$ is the
standard trace ${\rm Tr}$, then it is not difficult to see that
$S(\mathcal{M})=S(\mathcal{M},\tau)=\mathcal{M}$ and the measure topology on $S(\cM)=\cM$ coincides with the operator norm topology on $\cM$. In addition,
for $X\in S(\mathcal{M},\tau)$ we have
$$\mu(t,X)=\mu(n,X),\quad t\in[n,n+1),\quad  n\geq0.$$
The sequence $\{\mu(n,X)\}_{n\geq0}$ is just the sequence of singular values of the operator $X.$

The
following definition, originally introduced in \cite{KS08b}, plays a
major role in our constructions.

\begin{defn}\label{uniform majorization def} Let $\mathcal{M}$ be a semifinite von Neumann algebra and let
	$X,Y\in S(\mathcal{M},\tau)$. We say that that $Y$ is uniformly majorized by $X$ (written $Y\lhd X$)
	if there exists $\lambda\in\mathbb{N}$ such that
	\begin{equation}\label{uniform majorization func}
		\int_{\lambda a}^b\mu(s,Y)ds\leq\int_a^b\mu(s,X)ds,\quad 0\leq\lambda a\leq b.
	\end{equation}
\end{defn}
We also use notation $X\lhd f$ if $\mu(X)\lhd \mu(f)$, $X\in S(\cM,\tau), \, f\in S(0,\infty)$.

For $s>0$ we define the dilation operator $\sigma_s$ on $S(0,\infty)$ by setting
$$(\sigma_s(f))(t)=f\bigl(\frac{t}{s}\bigr),\quad t>0.$$

\begin{lem}\cite[Lemma 3.5.4]{LSZ}\label{un_maj} Let 
	$X_k\in S(\mathcal{M},\tau)$, $k\in\mathbb{N}$ and assume that the series $\sum_{k=1}^{\infty}X_k$ converges in the measure topology on $S(\mathcal{M},\tau)$ and the series $\sum_{k=1}^{\infty}\sigma_{2^k}\mu(X_k)$ converges in measure in
	$S(0,\infty)$. Then, with $\lambda=2$ in the definition of uniform majorisation, we have
	$$\sum_{k=1}^{\infty}X_k\lhd2\sum_{k=1}^{\infty}\sigma_{2^k}\mu(X_k).$$
\end{lem}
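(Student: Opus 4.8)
The plan is to establish a pointwise bound on the singular value function of the partial sums that already encodes the dyadic dilations, and then to integrate it. Writing $f_k = \mu(X_k)$ and $S_N = \sum_{k=1}^N X_k$, the starting point is the subadditivity estimate \eqref{sum_rear}, namely $\mu(2t, A+B) \le \mu(t,A) + \mu(t,B)$, which I would apply repeatedly, peeling off one summand at a time from the front of the tail $R_j := \sum_{k=j}^N X_k$. Each application halves the parameter, since $\mu(t,R_j)=\mu(2\cdot\tfrac{t}{2},X_j+R_{j+1})\le\mu(\tfrac t2,X_j)+\mu(\tfrac t2,R_{j+1})$, so after $N$ steps the telescoping estimate
$$\mu(2t, S_N) \le \sum_{k=1}^N \mu\Big(\tfrac{t}{2^{k-1}}, X_k\Big), \quad t>0,$$
emerges. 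Since each $f_k$ is non-increasing and $2^{k-1} \le 2^k$, I may replace $t/2^{k-1}$ by $t/2^k$ to produce exactly the dilation operators, giving $\mu(2t, S_N) \le \sum_{k=1}^N (\sigma_{2^k} f_k)(t)$.

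Next I would integrate this inequality. Because each $\sigma_{2^k} f_k$ is non-increasing and non-negative, the function $X_N := 2\sum_{k=1}^N \sigma_{2^k} f_k$ equals its own singular value function, so the previous bound reads $\mu(2t, S_N) \le \tfrac12 \mu(t, X_N)$. For $0 \le 2a \le b$, the substitution $s = 2t$ then yields
$$\int_{2a}^b \mu(s, S_N)\,ds \le \int_a^{b/2} \mu(s, X_N)\,ds \le \int_a^b \mu(s, X_N)\,ds,$$
which is precisely the uniform majorisation inequality (with $\lambda=2$) at the level of the partial sums.

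Finally I would pass to the limit $N \to \infty$. On the right-hand side, $X_N \uparrow X := 2\sum_{k=1}^\infty \sigma_{2^k} f_k$ pointwise, so monotone convergence gives $\int_a^b \mu(s, X_N)\,ds \to \int_a^b \mu(s, X)\,ds$; the hypothesis that $\sum_k \sigma_{2^k}\mu(X_k)$ converges in measure guarantees that $X \in S(0,\infty)$ and that this limit is finite. On the left-hand side, the hypothesis that $S_N \to \sum_k X_k$ in the measure topology on $S(\cM,\tau)$ implies $\mu(S_N) \to \mu\big(\sum_k X_k\big)$ in measure on $(0,\infty)$; extracting an a.e.-convergent subsequence and applying Fatou's lemma gives $\int_{2a}^b \mu(s, \sum_k X_k)\,ds \le \liminf_N \int_{2a}^b \mu(s, S_N)\,ds$. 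Combining the two limits produces $\int_{2a}^b \mu(s, \sum_k X_k)\,ds \le \int_a^b \mu(s, X)\,ds$ for all $0 \le 2a \le b$, which is exactly $\sum_k X_k \lhd 2\sum_k \sigma_{2^k}\mu(X_k)$.

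The routine part is the algebraic iteration of \eqref{sum_rear}; the main obstacle is the limit passage in the last step. It rests on the (standard but non-trivial) continuity of the generalised singular value function under convergence in measure, namely $A_n \to A$ in $t_\tau$ implies $\mu(A_n) \to \mu(A)$ in measure, together with the Fatou argument that converts this into the integral inequality in the limit. Care is also needed to confirm that both sides remain finite, which is where the two convergence hypotheses are genuinely used.
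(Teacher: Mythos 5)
This lemma is not proved in the paper at all: it is imported verbatim with the citation \cite[Lemma 3.5.4]{LSZ}, so there is no internal proof to compare against. Your argument is correct and is essentially the standard proof of this fact: iterating \eqref{sum_rear} dyadically to get $\mu(2t,S_N)\leq\sum_{k=1}^N\mu(t/2^{k-1},X_k)\leq\sum_{k=1}^N(\sigma_{2^k}\mu(X_k))(t)$, integrating with the substitution $s=2t$ to obtain the uniform majorisation inequality for partial sums, and then passing to the limit via monotone convergence on the right and Fatou on the left.

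One small imprecision in the limit passage: convergence $S_N\to\sum_k X_k$ in $t_\tau$ does not literally give $\mu(S_N)\to\mu\bigl(\sum_k X_k\bigr)$ ``in measure on $(0,\infty)$'' (the underlying measure space is infinite); what the Fack--Kosaki theory gives is $\mu(t,S_N)\to\mu\bigl(t,\sum_k X_k\bigr)$ at every point of continuity of $\mu\bigl(\cdot,\sum_k X_k\bigr)$, hence almost everywhere, since a decreasing function has at most countably many discontinuities. This follows from the two-sided estimates $\mu(t+s,S_N)\leq\mu(t,\sum_k X_k)+\mu(s,S_N-\sum_k X_k)$ and $\mu(t+s,\sum_k X_k)\leq\mu(t,S_N)+\mu(s,\sum_k X_k-S_N)$. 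Almost-everywhere convergence is exactly what your Fatou step needs (no subsequence extraction is even required), so the proof goes through; you should just cite or state the continuity property in this a.e.\ form rather than as convergence in measure. Similarly, on the right-hand side one should note that the increasing partial sums $\sum_{k=1}^N\sigma_{2^k}\mu(X_k)$ converge a.e.\ to their measure-limit (monotone sequences converging in measure converge a.e.\ to the same limit), which justifies the monotone convergence step and the identification $\mu\bigl(2\sum_{k=1}^\infty\sigma_{2^k}\mu(X_k)\bigr)=2\sum_{k=1}^\infty\sigma_{2^k}\mu(X_k)$ a.e.
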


\subsection{Calkin spaces and symmetric functionals}

Following \cite[Section 2.4]{LSZ} we introduce the notion of Calkin spaces.

\begin{defn}\label{Calkin_dfn}
	A linear subspace $E(\cM,\tau)$ of $\smt$ is called a \emph{Calkin space} if $B\in E(\cM,\tau)$, $A\in\smt$ and $\mu(A)\leq \mu(B)$ implies that $A\in E(\cM,\tau)$. In the special case when $\cM=L_\infty(0,\infty)$ or $\cM=L_\infty(0,1)$ (respectively, $\cM=\ell_\infty(\N)$) we use the term Calkin function (respectively, sequence) space instead  denote $E(\cM,\tau)$ by $E(0,\infty)$ (respectively, $E(0,1),$ or $E(\N)$). 
\end{defn}

Note that any Calkin space $E(\cM,\tau)$ is an $\cM$-bimodule, that is, if $X\in E(\cM,\tau)$, $A,B\in \cM$, then $AXB\in E(\cM,\tau)$. 

As an example of a Calkin space one can take the space $S_0(\cM,\tau)$ of $\tau$-compact operators, defined as 
$$S_0(\cM,\tau) = \{X\in S(\cM,\tau) :  \ \mu(\infty, X)=\lim_{t\to\infty} \mu(t,X)=0\}.$$
Another example of Calkin spaces are the noncommutative $\cL_p$-spaces, defined as 
$$\cL_p(\cM,\tau):=\{X\in \smt: \mu(X)\in L_p(0,\infty)\},\quad 0<p<\infty,$$
where $L_p(0,\infty)$ are the classical Lebesgue $L_p$-spaces.

Next we introduce the main objects of interest in the present paper, traces and symmetric functionals on Calkin spaces. 

\begin{defn}Let $E(\cM,\tau)$ be a Calkin space. A linear functional $\phi$ on $E(\cM,\tau)$ is said to be
	\begin{enumerate}
		\item  a \emph{trace}, if $\phi(UXU^*)=\phi(X)$ for any $X\in E(\cM,\tau)$ and any unitary $U\in \cM$. 
		\item a \emph{symmetric functional}, if $\phi(X)=\phi(Y)$ for any $0\leq X,Y\in E(\cM,\tau)$ with $\mu(X)=\mu(Y).$
	\end{enumerate}
\end{defn}

Is is clear that any symmetric functional on a Calkin space is a trace. However, in general, there are traces, which are not symmetric functionals (see e.g. Remark \ref{rem_from_trace_not_shift}). In the special case, when $\cM=B(H)$ any trace is necessarily a symmetric functional (see e.g. \cite[Lemma 4.5]{GI1995} and  \cite[Lemma 2.7.4]{LSZ}).


\section{Pietsch correspondence for Calkin spaces}\label{sec_Pietsch_corr}

In this section, following \cite{P_trI}, we introduce shift-monotone spaces of sequences, indexed by $\Z$ and show that there is a bijective correspondence between all Calkin spaces and all shift-monotone spaces. 

\subsection{Shift-monotone sequence spaces} 
Let $\eli$ be the algebra of all bounded sequences indexed by $\Z$ and let $S(\Z)$ be the algebra of all two-sided sequences bounded at $+\infty$, that is, all sequences $\{x_n\}_{n\in\Z}$ such that we have $\sup_{k\geq n}|x_k|<\infty$ for all $n\in\Z$. Similarly, we can define the spaces $S(\Z_-)$ and $S(\Z_+)$. We have $S(\Z_+)=\ell_\infty(\Z_+).$

Following the definition of A.Pietsch \cite{P_trI}, for $x\in S(\Z)$ we define the \emph{ordering number} $o_n(x)$, $n\in\Z$, by setting
\begin{equation}
	o_n(x)=\sup_{k\geq n}|x_k|,\quad n\in\Z,
\end{equation}
and use the notation $o(x)=\{o_n(x)\}_{n\in\Z}$.
If $x\in S(\Z)$ is a positive decreasing sequence, then $o(x)=x$.

On the space $S(\Z)$ we define the (right) shift operator $S_+:S(\Z)\to S(\Z)$, by setting
$$(S_+x)_n=x_{n-1},\quad x=\{x_n\}_{n\in\Z}\in S(\Z).$$
Similarly for the spaces $S(\Z_-)$ and $S(\Z_+)$ we define the corresponding right shift operators as follows 
$$S_+\{x_n\}_{n\in\Z_+}=\{0, x_1,x_2,\dots\}, \quad S_+\{x_n\}_{n\in\Z_-}=\{\dots, x_{n-1}, \dots, x_{-3}, x_{-2}\}.$$

We want to associate with a Calkin space $E(\cM,\tau)$ on a von Neumann algebra a linear subspace in $S(\Z)$. To this end, we introduce the notion of shift-monotone spaces on $\Z$.

\begin{defn}\label{def_si}
	A linear subspace $E(\Z)\subset S(\Z)$ is called a \emph{shift-monotone space} if
	\begin{enumerate}
		\item $x\in S(Z), y\in E(\Z)$ and $o(x)\leq o(y)$ implies that $x\in E(\Z).$ 
		\item $S_+x\in E(\Z)$ for any $x\in E(\Z)$. 
	\end{enumerate}
	Similarly, one can define shift-monotone spaces on $\Z_-=\{n\in\Z, z\leq 0\}$ and on $\Z_+=\{n\in\Z:n\geq 0\}$ (in the latter case, one recovers the original definition due to A.Pietsch \cite{P_trI}).
\end{defn}

We note that any shift-monotone space $E(\Z)$ is an $\ell_\infty(\Z)$-bimodule with respect to the pointwise operations in $S(\Z)$, since for any $a, b\in\ell_\infty(\Z), x\in E(\Z)$ we have
$$o_n(axb)=\sup_{k\geq n}|a_nx_nb_n|\leq \|a\|_\infty\|b\|_\infty o_n(x),\quad n\in\Z.$$

In particular, any shift-monotone space $E(\Z)$ is a solid subspace of $S(\Z)$, that is if $x\in E(\Z)$, $y\in S(\Z)$ and $|y|\leq |x|$, then $y\in E(\Z)$.

\subsection{Pietsch correspondence for commutative case}

In order to introduce the Pietsch correspondence between Calkin function spaces in $S(0,\infty)$
and shift-monotone spaces in $S(\Z)$
we define the Pietsch operator  $D:S(\Z)\to S(0,\infty)$
by setting
\begin{equation}\label{def_D}
	Dx=\sum_{n\in\Z} x_n\chi_{[2^n,2^{n+1})},\quad x\in S(\Z).
\end{equation}
The operator $D$ is well-defined since any $x\in S(\Z)$ is bounded at $+\infty$ and so  $Dx\cdot \chi_{[1,\infty)}$ is bounded.
It is clear that $D$ is a linear and positive from $S(\Z)$ into $S(0,\infty)$.

Recall (see e.g. \cite{Kalton}), that the decreasing rearrangement $f^*$ of a function $f\in S(0,\infty)$ may be also defined by
\begin{equation}\label{f^*_equiv_def}
	f^*(t)=\inf_{m(B)< t}\sup_{s\notin B}|f(s)|.
\end{equation}

The following lemma is analogous to \cite[Lemma 4.4]{P_trI}.
\begin{lem}\label{order_vs_rear}
	For any $x\in S(\Z)$
	we have that
	$$o_{n}(x)=(Dx)^*(2^n),\quad n\in\Z.$$
	In particular,
	$$(Dx)^*\leq D(o(x))\leq \sigma_2 (Dx)^*,\quad x\in S(\Z).$$
\end{lem}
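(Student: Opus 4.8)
The plan is to compute $(Dx)^*(2^n)$ directly from the inf--sup description \eqref{f^*_equiv_def} of the decreasing rearrangement, exploiting that $Dx$ is constant on each dyadic block $I_k=[2^k,2^{k+1})$, which has Lebesgue measure $2^k$. Since both $o_n(x)$ and $(Dx)^*$ depend on $x$ only through $|x|$, and $D|x|=|Dx|$, I would first reduce to the case $x\ge 0$. The single arithmetic identity driving everything is $\sum_{k<n}2^k=2^n=m\big([0,2^n)\big)$, which is exactly why the threshold $2^n$ is the correct one; I would then prove the two inequalities $(Dx)^*(2^n)\ge o_n(x)$ and $(Dx)^*(2^n)\le o_n(x)$ separately.

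For the lower bound, fix any measurable $B$ with $m(B)<2^n$. For every $k\ge n$ the block $I_k$ has measure $2^k\ge 2^n>m(B)$, so a positive-measure subset of $I_k$ lies outside $B$, on which $|Dx|=x_k$; hence $\sup_{s\notin B}|Dx(s)|\ge x_k$. Taking the supremum over $k\ge n$ gives $\sup_{s\notin B}|Dx(s)|\ge o_n(x)$, and since $B$ was arbitrary the infimum in \eqref{f^*_equiv_def} is $\ge o_n(x)$.

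For the upper bound, since $x_k\le o_n(x)$ for all $k\ge n$, the super-level set $\{\,|Dx|>o_n(x)\,\}$ is contained in $[0,2^n)$, so choosing $B$ to be (an approximation of) this set gives $\sup_{s\notin B}|Dx|\le o_n(x)$, while the identity $\sum_{k<n}2^k=2^n$ controls $m(B)$. This is the delicate step and the main obstacle: the measure of $\{\,|Dx|>o_n(x)\,\}$ can equal, rather than fall strictly below, $2^n$ precisely when every block with $k<n$ exceeds $o_n(x)$, so the strict condition $m(B)<2^n$ in \eqref{f^*_equiv_def} must be arranged by shrinking $B$ slightly and passing to the limit, using the one-sided (left-continuous) representative produced by \eqref{f^*_equiv_def} at the dyadic break point $t=2^n$. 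Matching the index-tail supremum $o_n(x)$ with the measure-tail of the rearrangement at this jump is where essentially all the care goes.

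Finally, the ``in particular'' sandwich follows formally from the pointwise identity together with the monotonicity of $(Dx)^*$ and the dilation $\sigma_2$. On a block $t\in I_n$ one has $D(o(x))(t)=o_n(x)=(Dx)^*(2^n)$; since $(Dx)^*$ is non-increasing and $t\ge 2^n$, this yields $(Dx)^*(t)\le (Dx)^*(2^n)=D(o(x))(t)$, hence $(Dx)^*\le D(o(x))$. For the reverse inequality, $\sigma_2(Dx)^*(t)=(Dx)^*(t/2)$ with $t/2\in[2^{n-1},2^n)$, so $(Dx)^*(t/2)\ge (Dx)^*(2^n)=o_n(x)=D(o(x))(t)$, giving $D(o(x))\le \sigma_2(Dx)^*$. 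As the blocks $I_n$ exhaust $(0,\infty)$, both inequalities hold for every $t>0$.
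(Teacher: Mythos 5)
Your strategy coincides with the paper's: both proofs compute $(Dx)^*(2^n)$ from the inf--sup formula \eqref{f^*_equiv_def}, prove the inequalities $\ge o_n(x)$ and $\le o_n(x)$ separately, and deduce the sandwich from monotonicity of $(Dx)^*$ together with $D(o(x))(t)=o_n(x)$ on $[2^n,2^{n+1})$. Your lower bound is the paper's argument plus the justification the paper leaves unsaid (each block $[2^k,2^{k+1})$ with $k\ge n$ has measure $2^k\ge 2^n>m(B)$, hence meets the complement of $B$ in positive measure), and your derivation of the two displayed inequalities is essentially verbatim the paper's.

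The step you flag as delicate is, however, a genuine gap, and your proposed repair (shrink $B$ slightly and pass to the limit) cannot close it. Take $x_k=1$ for $k<0$ and $x_k=0$ for $k\ge 0$, so that $Dx=\chi_{(0,1)}$ and $o_0(x)=0$. Every $B$ with $m(B)<1$ misses a positive-measure subset of $(0,1)$, so $\sup_{s\notin B}|Dx|=1$ for every admissible $B$; hence the inf--sup in \eqref{f^*_equiv_def} at $t=2^0$ equals $1\ne 0=o_0(x)$, and no limiting procedure changes this. Be aware, though, that the paper's own proof founders on the same rock: its upper bound is obtained by inserting $B=(0,2^n)$, a set of measure exactly $2^n$, which is not admissible in the infimum over $m(B)<2^n$. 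The underlying problem is that \eqref{f^*_equiv_def} with strict inequality produces the left-continuous rearrangement, whereas the standard $f^*$ (and $\mu$) is right-continuous, and the exact identity $o_n(x)=(Dx)^*(2^n)$ fails at jump points under \emph{every} convention: the sequences $x$ above and $y$ given by $y_0=1$, $y_k=0$ ($k\ne 0$) have equimeasurable images $Dx=\chi_{(0,1)}$ and $Dy=\chi_{[1,2)}$, yet $o_0(x)=0\ne 1=o_0(y)$, and a rearrangement sees only the distribution. So the obstruction you honestly identified is a defect of the lemma as stated, shared by the paper's proof, rather than something the paper resolves and you missed. What is true under the standard right-continuous convention --- and what suffices for the ``in particular'' sandwich and for the later applications --- is the two-sided estimate $(Dx)^*(2^n)\le o_n(x)\le (Dx)^*(t)$ for every $t<2^n$, from which both inequalities $(Dx)^*\le D(o(x))\le\sigma_2(Dx)^*$ follow exactly as in your last paragraph.
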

\begin{proof}
	Let $n\in \Z$ and let $B$ be a measurable subset of $(0,\infty)$ with $m(B)< 2^n$.  We have
	$$\sup_{s\notin B}|Dx|(s)=\sup_{s\notin B}\sum_{k\in\Z}|x_k|\chi_{[2^k,2^{k+1})}(s)\geq \sup_{k\geq n}|x_k|=o_{n}(x).$$
	Therefore, using \eqref{f^*_equiv_def}, we infer
	$$(Dx)^*(2^n)=\inf_{m(B)< 2^n}\sup_{s\notin B}|Dx|(s)\geq o_{n}(x).$$
	
	On the other hand,
	\begin{align*}
		(Dx)^*(2^n)&=\inf_{m(B)< 2^n}\sup_{s\notin B}|Dx|(s)\leq \sup_{s\notin (0,2^n)}\sum_{k\in\Z}|x_k|\chi_{[2^k,2^{k+1})}(s)\\
		&=\sup_{k\geq n}|x_k|=o_n(x),
	\end{align*}
	which proves that
	$$(Dx)^*(2^n)= o_{n}(x).$$
	
	Further, since
	\begin{equation*}\label{f^*_sigmaf^*}
		f^*(t)\leq f^*(2^n)\leq \sigma_2(f^*(t)), \quad f\in S(0,\infty)
	\end{equation*}
	for all $t\in [2^n, 2^{n+1})$ and $D(o(x))(2^n)=o_n(x)$ we have that
	$$(Dx)^*(t)\leq (Dx)^*(2^n)= o_n(x)=D(o(x))(t), \ t\in [2^n, 2^{n+1})$$
	and
	$$D(o(x))(t)=o_n(x)=(Dx)^*(2^n)\leq (Dx)^*(t/2)=\sigma_2(Dx)^*(t), \ t\in [2^n, 2^{n+1}).$$
	Since the estimates above hold for an arbitrary $n\in \Z$, we infer that
	$$(Dx)^*\leq D(o(x))\leq \sigma_2 (Dx)^*.$$
	
\end{proof}

So far we introduced the operator $D$ which maps sequences into functions. We introduce now `almost' inverse $\Phi:S(0,\infty)\to S(\Z)$ of this operator by setting
$$\Phi f:=\{f^*(2^n)\}_{n\in\Z}, \quad f\in S(0,\infty).$$
Note that inequality \eqref{sum_rear} implies that for any $f,g\in S(0,\infty)$ we have 
\begin{align}\label{eq_Phi_shift}
	\Phi(f+g)=\{(f+g)^*(2^n)\}\leq \{f^*(2^{n-1})\}+\{g^*(2^{n-1})\}=S_+(\Phi f +\Phi g).
\end{align}

The following lemma shows that $\Phi$ is indeed `almost'  inverse of $D$.

\begin{lem}\label{exp_f_f}
	\begin{enumerate}
		\item Let $E(0,\infty)$ be a Calkin space. 
		A  function $f\in S(0,\infty)$ belongs to $E(0,\infty)$ if and only if $D\Phi f\in E(0,\infty)$. 
		
		\item 
		For every $x\in S(\Z)$ we have
		$\Phi Dx=o(x).$ In particular, if $E(\Z)$ is a shift-monotone space, then $x\in S(\Z)$ belongs to $E(\Z)$ if and only if $\Phi Dx\in E(\Z).$
	\end{enumerate}
\end{lem}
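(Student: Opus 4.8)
The plan is to derive both parts from the solidity that is built into the two notions of space, using Lemma~\ref{order_vs_rear} as the sole bridge between sequences and functions; the only genuinely nontrivial input will be the dilation invariance of Calkin spaces. Part (ii) is essentially immediate. By definition $\Phi Dx=\{(Dx)^*(2^n)\}_{n\in\Z}$, and Lemma~\ref{order_vs_rear} identifies $(Dx)^*(2^n)=o_n(x)$, so $\Phi Dx=o(x)$. For the membership assertion I would use that $o(x)$ is a positive decreasing sequence, whence $o(o(x))=o(x)$; then axiom (i) of Definition~\ref{def_si}, applied once with the admissible pair $\bigl(o(x),x\bigr)$ and once with $\bigl(x,o(x)\bigr)$, gives $x\in E(\Z)\iff\Phi Dx\in E(\Z)$ in both directions.

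For part (i) I would first record the pointwise sandwich
$$\mu(f)\le D\Phi f\le\sigma_2\mu(f),\qquad f\in S(0,\infty).$$
Here $D\Phi f=\sum_{n\in\Z}f^*(2^n)\chi_{[2^n,2^{n+1})}$ is itself decreasing (as $f^*(2^n)$ decreases in $n$), so $\mu(D\Phi f)=D\Phi f$; the left inequality holds because $f^*(t)\le f^*(2^n)$ for $t\in[2^n,2^{n+1})$, and the right one because $f^*(2^n)\le f^*(t/2)$ for such $t$. The implication $D\Phi f\in E(0,\infty)\Rightarrow f\in E(0,\infty)$ is then immediate from the left inequality and the Calkin solidity axiom of Definition~\ref{Calkin_dfn}.

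The forward implication $f\in E(0,\infty)\Rightarrow D\Phi f\in E(0,\infty)$ is where the sandwich reduces everything to showing $\sigma_2\mu(f)\in E(0,\infty)$, i.e.\ that Calkin spaces are invariant under the dilation $\sigma_2$, and this I expect to be the main obstacle. I would establish it as a separate step: if $0\le g=\mu(g)\in E(0,\infty)$ then $\sigma_2 g\in E(0,\infty)$. The argument is a disjoint-copies computation. Choose disjointly supported $F_1,F_2\in S(0,\infty)$, each a measure-preserving copy of $g$ (for instance $F_1(t)=g(t-k)$ on $[2k,2k+1)$ and $F_2(t)=g(t-k-1)$ on $[2k+1,2k+2)$), so that $\mu(F_1)=\mu(F_2)=g$ and, by disjointness, $d_{F_1+F_2}(\lambda)=2\,d_g(\lambda)$, which yields $\mu(F_1+F_2)=\sigma_2 g$. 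Solidity places $F_1,F_2$ in $E(0,\infty)$, linearity places $F_1+F_2$ there, and a final appeal to solidity via $\mu(\sigma_2 g)=\mu(F_1+F_2)$ gives $\sigma_2 g\in E(0,\infty)$. Applying this to $g=\mu(f)$, which lies in $E(0,\infty)$ since $\mu(\mu(f))=\mu(f)$, and combining with the sandwich and solidity, completes part (i). The remaining work is the routine verification of the three pointwise inequalities and of $o(o(x))=o(x)$, neither of which presents difficulty.
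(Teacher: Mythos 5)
Your proposal is correct and follows essentially the same route as the paper's proof: part (i) rests on the sandwich $\mu(f)\leq D\Phi f\leq\sigma_2\mu(f)$ together with Calkin solidity, and part (ii) is exactly the identification $(\Phi Dx)_n=(Dx)^*(2^n)=o_n(x)$ from Lemma~\ref{order_vs_rear} followed by the shift-monotone axiom. The only point where you go beyond the paper is your explicit disjoint-copies proof that $\sigma_2\mu(f)\in E(0,\infty)$: the paper's step ``$D\Phi f\leq\sigma_2 f$, and so $D\Phi f\in E(0,\infty)$'' invokes this dilation invariance of Calkin spaces silently, so your supplying it is a welcome completion of the same argument rather than a different approach.
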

\begin{proof}

	(i) Without loss of generality $f=f^*\in S(0,\infty)$. 
	
	Since $f$ is decreasing, it follows that 
	\begin{equation}\label{f_via_expec}
		f\leq \sum_{n\in \Z}f(2^n)\chi_{[2^n,2^{n+1})}=D\Phi f.
	\end{equation}
	Hence, if $D\Phi f\in E(0,\infty)$, then $f\in E(0,\infty)$.
	
	Conversely, assume $f\in E(0,\infty)$. We have 
	\begin{equation}\label{expec_via_f}
		D\Phi f=\sum_{n\in \Z}f(2^n)\chi_{[2^n,2^{n+1})}=\sum_{n\in \Z}\sigma_2 f(2^{n+1})\chi_{[2^n,2^{n+1})}\leq \sigma_2 f,
	\end{equation}
	and so $D\Phi f\in E(0,\infty)$. 
	
	(ii) For every $x\in S(\Z)$ we have
	$$(\Phi Dx)_n=(Dx)^*(2^n)=o_n(x),$$
	by Lemma \ref{order_vs_rear}. The assertion follows from the fact that $E(\Z)$ is shift-monotone space.

\end{proof}

\begin{thm}[Pietsch correspondence in commutative case]\label{thm_corr_commutative}
	The rule 
	\begin{align*}
		\pi:E(0,\infty)\to E(\Z):=\{x\in S(\Z): Dx\in E(0,\infty)\},
	\end{align*}
	defines a one-to-one correspondence between the Calkin function spaces on $(0,\infty)$ and shift-monotone sequence spaces in $S(\Z)$. The inverse is given by 
	\begin{align*}
		\pi^{-1}:E(\Z)\to E(0,\infty):=\{f\in S(0,\infty): \Phi f\in E(\Z)\},
	\end{align*}
\end{thm}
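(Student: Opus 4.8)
The plan is to verify the three assertions packaged in the statement separately: first that $\pi(E(0,\infty))$ is a shift-monotone space for every Calkin function space $E(0,\infty)$, second that $\pi^{-1}(E(\Z))$ is a Calkin function space for every shift-monotone space $E(\Z)$, and finally that $\pi$ and $\pi^{-1}$ are mutually inverse. The whole argument runs on the two comparison lemmas already established: Lemma~\ref{order_vs_rear}, which identifies $o_n(x)$ with $(Dx)^*(2^n)$ and sandwiches $D(o(x))$ between $(Dx)^*$ and $\sigma_2(Dx)^*$, and Lemma~\ref{exp_f_f}, which says that membership in a Calkin space is insensitive to passing between $f$ and $D\Phi f$, and that $\Phi Dx=o(x)$.

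For the first assertion, linearity of $\pi(E(0,\infty))=\{x\in S(\Z):Dx\in E(0,\infty)\}$ is immediate since $D$ is linear and $E(0,\infty)$ is a subspace. For the solidity axiom~(i), I would take $y\in\pi(E(0,\infty))$ and $x\in S(\Z)$ with $o(x)\le o(y)$; since $\Phi Dy=o(y)$, Lemma~\ref{exp_f_f}(i) turns $Dy\in E(0,\infty)$ into $D(o(y))\in E(0,\infty)$, and because $o(y)$ is a positive decreasing sequence, $D(o(y))$ is decreasing with $\mu(D(o(y)))=D(o(y))$. Lemma~\ref{order_vs_rear} then gives $\mu(Dx)=(Dx)^*\le D(o(x))\le D(o(y))$, and solidity of the Calkin space $E(0,\infty)$ yields $Dx\in E(0,\infty)$, i.e. $x\in\pi(E(0,\infty))$.

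The shift axiom~(ii) is the genuinely substantive point, and I expect it to be the main obstacle. A direct computation from \eqref{def_D} shows $D(S_+x)=\sigma_2(Dx)$, so closure of $\pi(E(0,\infty))$ under $S_+$ is exactly closure of the Calkin space $E(0,\infty)$ under the dilation $\sigma_2$ — a property that does not follow from solidity alone and forces one to use linearity. Starting from $x\in\pi(E(0,\infty))$, as above I obtain $D(o(x))\in E(0,\infty)$, so it suffices to show $\sigma_2 D(o(x))\in E(0,\infty)$; by Lemma~\ref{order_vs_rear} applied to $S_+x$ (noting $o(S_+x)=S_+o(x)$) this dominates $\mu(DS_+x)$, and solidity then finishes. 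To prove the dilation stays inside the space I would split each dyadic block in half: writing $D(o(x))=\sum_n o_n(x)\chi_{[2^n,2^{n+1})}$, set $u=\sum_n o_{n-1}(x)\chi_{[2^n,3\cdot 2^{n-1})}$ and $v=\sum_n o_{n-1}(x)\chi_{[3\cdot 2^{n-1},2^{n+1})}$. Then $\sigma_2 D(o(x))=u+v$, and a short rearrangement count shows $\mu(u)=\mu(v)=D(o(x))$, so $u,v\in E(0,\infty)$ by solidity and $\sigma_2 D(o(x))=u+v\in E(0,\infty)$ by linearity.

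For the second assertion, the delicate point is linearity of $\pi^{-1}(E(\Z))=\{f\in S(0,\infty):\Phi f\in E(\Z)\}$, since $\Phi$ is not additive. Here I would invoke the subadditivity estimate \eqref{eq_Phi_shift}: if $\Phi f,\Phi g\in E(\Z)$ then $\Phi f+\Phi g\in E(\Z)$, hence $S_+(\Phi f+\Phi g)\in E(\Z)$ by axiom~(ii), and since $\Phi(f+g)$ is positive decreasing with $\Phi(f+g)\le S_+(\Phi f+\Phi g)$, axiom~(i) gives $\Phi(f+g)\in E(\Z)$; homogeneity is clear from $\Phi(cf)=|c|\Phi f$. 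Solidity is immediate, as $f^*\le g^*$ forces $\Phi f\le\Phi g$ pointwise between positive decreasing sequences, so axiom~(i) transfers membership. Finally, the two composition identities are read directly off Lemma~\ref{exp_f_f}: $f\in\pi^{-1}(\pi(E(0,\infty)))\iff D\Phi f\in E(0,\infty)\iff f\in E(0,\infty)$ by part~(i), and $x\in\pi(\pi^{-1}(E(\Z)))\iff \Phi Dx=o(x)\in E(\Z)\iff x\in E(\Z)$ by part~(ii), giving $\pi^{-1}\circ\pi=\mathrm{id}$ and $\pi\circ\pi^{-1}=\mathrm{id}$, hence the claimed bijection.
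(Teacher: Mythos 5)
Your proposal is correct, and its overall skeleton coincides with the paper's proof: linearity of $\pi(E(0,\infty))$ from linearity of $D$, solidity via Lemma \ref{order_vs_rear}, the identity $D(S_+x)=\sigma_2(Dx)$ for the shift axiom, linearity of $\pi^{-1}(E(\Z))$ via \eqref{eq_Phi_shift} together with the shift and solidity axioms, and the two composition identities read off Lemma \ref{exp_f_f}. The one place where you genuinely diverge is the treatment of dilation. The paper handles both axioms of Definition \ref{def_si} by the chain $(Dy)^*\leq D(o(y))\leq D(o(x))\leq\sigma_2(Dx)^*$ and the identity $D(S_+x)=\sigma_2(Dx)$, and at both points simply \emph{asserts} that $\sigma_2(Dx)\in E(0,\infty)$, i.e.\ it takes for granted the standard but nontrivial fact that Calkin function spaces are closed under the dilation $\sigma_2$ --- which, as you correctly emphasise, does not follow from solidity alone. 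Your write-up supplies this missing ingredient: for solidity you bypass $\sigma_2$ entirely by using Lemma \ref{exp_f_f}(i) to place $D(o(y))$ in $E(0,\infty)$ and then dominating $(Dx)^*$ by it, and for the shift axiom you prove $\sigma_2 D(o(x))\in E(0,\infty)$ directly by splitting each dyadic block in half and checking that the two halves $u,v$ are each equimeasurable with $D(o(x))$ (your count of the distribution functions is right), so solidity plus linearity finishes. This makes your argument more self-contained than the paper's at its only non-routine step, at the cost of some extra length; it is worth noting that Lemma \ref{exp_f_f}(i), which you cite, itself silently uses the same dilation fact in its proof (via the bound $D\Phi f\leq\sigma_2 f$), so your splitting decomposition is in effect the only place in either treatment where closure under $\sigma_2$ is actually established rather than assumed.
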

\begin{proof}
	We firstly show that $\pi$ is well-defined. Let $E(0,\infty)$ be a Calkin function space.  The fact that $E(\Z)$ is a linear subspace of $S(\Z)$ follows from the linearity of the operator $D$. 
	
	Let $x\in E(\Z)$ and $y\in S(\Z)$ be such that $o(y)\leq o(x)$ for all $n\in \Z$.
	By Lemma~\ref{order_vs_rear} we have
	\begin{equation}\label{Dy_Dox}
		(Dy)^*\leq D(o(y))\leq D(o(x))\leq \sigma_2(Dx)^*.
	\end{equation}
	Since 
	$\sigma_2(Dx)^*\in E(0,\infty)$, it follows that $Dy\in E(0,\infty)$, or equivalently $y\in  E(\Z)$.
	
	Further, for $x\in  E(\Z)$, we have 
	$$D(S_+x)=\sum_{n\in\Z}x_{n-1}\chi_{[2^n,2^{n+1})}=\sigma_2 \Big(\sum_{n\in\Z}x_{n-1}\chi_{[2^{n-1},2^{n})}\Big)=\sigma_2(Dx).$$
	Since $\sigma_2(Dx)\in E(0,\infty)$, it follows that  $D(S_+x)\in E(0,\infty)$, and so $S_+x\in  E(\Z)$. 
	Thus, $E(\Z)$ is a shift-monotone sequence space.

	Next, we show that the rule $\pi^{-1}$ is also well-defined. Let $E(\Z)$ be a shift-monotone sequence space. We claim that the set $E^*(0,\infty)=\{f\in S(0,\infty): \Phi f\in E(\Z)\}$ is a Calkin space.  Using the inequality \eqref{eq_Phi_shift} for  $f,g\in S(0,\infty)$, we have $\Phi(f+g)\leq S_+(\Phi f+\Phi g).$ Since $E(\Z)$ is shift-monotone, it follows that $\Phi(f+g)\in E(\Z)$, that is $f+g\in E^*(0,\infty)$. Thus, $E^*(0,\infty)$ is linear subspace of $\smt$. 
	Now, let $f\in S(0,\infty)$, $g\in E^*(0,\infty)$ and $f^*\leq g^*$. Definition of $\Phi$ immediately implies that $\Phi f \leq \Phi g$, and so $\Phi f\in E(\Z)$, or, equivalently, $f\in E^*(0,\infty)$.  Thus,  $E^*(0,\infty)$ is a Calkin function space. 
	
	Finally, we show that $\pi^{-1}$ is indeed the inverse of $\pi$. If $E(0,\infty)\xrightarrow{\pi} E(\Z)\xrightarrow{\pi^{-1}} E^*(0,\infty)$, then by Lemma \ref{exp_f_f} we have 
	\begin{align*}
		E^*(0,\infty)&=\{f\in S(0,\infty): \Phi f\in  E(\Z)\}\\
		&=\{f\in S(0,\infty): D\Phi f\in E(0,\infty)\}=E(0,\infty).
	\end{align*}
	Thus, $\pi^{-1}$ is the left inverse of $\pi$ on the collection of all Calkin  function spaces. 
	
	On the other hand, if  $E(Z)\xrightarrow{\pi^{-1}}E^*(0,\infty)\xrightarrow{\pi} E^*(\Z),$ 
	then by Lemma \ref{exp_f_f} (ii) we have
	\begin{align*}
		E^*(\Z)&=\{x\in S(\Z): Dx\in E^*(0,\infty)\}=\{x\in S(\Z): \Phi Dx\in E(\Z)\}\\
		&=\{x\in S(\Z): o(x)\in E(\Z)\}=E(\Z).
	\end{align*}
	Thus, $\pi^{-1}$ is also the right inverse of $\pi$, as required.

\end{proof}

\begin{rem}\label{rem_different_approach}
	Similar assertions hold for Calkin function spaces on $(0,1)$ (respectively,  Calkin sequence spaces on $\N$) and shift-monotone spaces on $\Z_-$ (respectively, on $\Z_+$).

\end{rem}

\subsection{Noncommutative case}

Now, let $\cM$ be an atomless (or atomic) von Neumann algebra equipped with a faithful normal semifinite trace $\tau$. The noncommutative analogue of Theorem \ref{thm_corr_commutative} follows from the classical Calkin correspondence (see e.g. \cite[Chapter 2]{LSZ}).  However, for clarity of the bijective coorespondnce of symmetric functionals on Calkin spaces and $\frac12 S_+$-invariant functionals on the corresponding shift-invariant sequence space, stated in Corollary \ref{cor_one-to-one}  we prefer a self-contained exposition. 

We recall (see \cite{P_trI}) that ideals of compact operators on a Hilbert space $H$, corresponding to a shift-monotone ideal on $\Z_+$, are constructed via diagonal operator with respect to a fixed orthonormal basis in $H$. For atomless von Neumann algebra there is an analogue of diagonal operator given in the following theorem.

\begin{thm}\cite[Theorem 2.3.11]{LSZ}, \cite[Lemma 1.3]{CKS}
	\label{atomless_diagonal} Let $\cM$ be an atomless (or atomic) von Neumann algebra equipped with a faithful normal semifinite trace $\tau$ and let $0\leq A\in S_0(\cM,\tau)$. There exits a $\sigma$-finite commutative
	subalgebra $\cM_0$ of $\cM$ such that the restriction $\tau|_{\cM_0}$ is semifinite. 
	
	Moreover, there exists a trace preserving $*$-isomorphism $\iota$ from $S(0,\tau(\mathbf{1}))$ onto $S(\cM_0,\tau|_{\cM_0})$ (respectively, $\ell_\infty\to \cM_0$), such
	that $\mu(\iota(X))=\mu(X)$ for any $X\in S(0,\tau(\mathbf{1}))$ and $\iota(\mu(A))=A.$
\end{thm}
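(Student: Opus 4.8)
The plan is to realise $A$ as a \emph{multiplication operator}: I would construct inside $\cM$ a strongly continuous increasing chain of projections $\{e(t)\}_{t\in[0,\tau(\mathbf{1}))}$ with $\tau(e(t))=t$ that is adapted to the spectral decomposition of $A$, take $\cM_0$ to be the abelian von Neumann algebra generated by this chain, and show that $\cM_0$ is isomorphic to $L_\infty(0,\tau(\mathbf{1}))$ with the Lebesgue trace, in such a way that $A$ corresponds to its own singular value function $\mu(A)$. Since $A$ is self-adjoint positive this abelian algebra will automatically commute with $A$, and the remaining assertions ($\sigma$-finiteness, semifiniteness, $\mu(\iota(X))=\mu(X)$, $\iota(\mu(A))=A$) will follow from the trace-preserving nature of the isomorphism.

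First I would record the distribution data of $A$. Because $0\le A\in S_0(\cM,\tau)$, we have $d_A(s)=\tau(E^A(s,\infty))<\infty$ for every $s>0$, the family $\{E^A(s,\infty)\}_{s>0}$ is decreasing with $E^A(s,\infty)\uparrow\mathbf{1}$ as $s\downarrow 0$, and $\mu(t,A)=\inf\{s\ge 0:d_A(s)\le t\}$ is the decreasing right-continuous inverse of $d_A$. The obstruction to reading a chain of trace $t$ directly off $\{E^A(s,\infty)\}$ is that $d_A$ may jump, precisely when $A$ has an eigenvalue $s_0$ with $\tau(E^A(\{s_0\}))>0$. I would repair this using atomlessness: at each such $s_0$ the reduced algebra $q\cM q$ with $q=E^A(\{s_0\})$ is again atomless, so one can choose an increasing strongly continuous family of subprojections of $q$ whose traces sweep out the whole interval $[0,\tau(q)]$, and inserting these into $\{E^A(s,\infty)\}$ at every jump yields an increasing, strongly continuous family $\{e(t)\}$ with $e(0)=0$, $e(t)\uparrow\mathbf{1}$, $\tau(e(t))=t$, and $e(d_A(s))=E^A(s,\infty)$ for all $s$. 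This jump-filling, together with the verification that the resulting family is genuinely strongly continuous so that the algebra it generates is all of $L_\infty(0,\tau(\mathbf{1}))$ rather than a proper subalgebra, is the technical heart of the argument. By construction $A$ commutes with every $e(t)$ and $A=\int_0^{\tau(\mathbf{1})}\mu(t,A)\,de(t)$.

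Next, letting $\cM_0$ be the abelian von Neumann algebra generated by $\{e(t)\}$, the order-isomorphic strongly continuous chains $\chi_{[0,t)}\mapsto e(t)$ generate $L_\infty(0,\tau(\mathbf{1}))$ on one side and $\cM_0$ on the other; since $\tau(e(t))=t=m([0,t))$, this extends to a normal trace-preserving $*$-isomorphism $\iota\colon L_\infty(0,\tau(\mathbf{1}))\to\cM_0$, and hence to $\iota\colon S(0,\tau(\mathbf{1}))\to S(\cM_0,\tau|_{\cM_0})$. In particular $\cM_0\cong L_\infty(0,\tau(\mathbf{1}))$ is $\sigma$-finite and $\tau|_{\cM_0}$ is semifinite. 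Because $\iota$ is trace-preserving and sends spectral projections to spectral projections, it preserves distribution functions, so $\mu(\iota(X))=\mu(X)$ for all $X$. Finally, the multiplication operator by the decreasing function $\mu(A)$ has spectral family $\chi_{\{\mu(A)>s\}}=\chi_{[0,d_A(s))}$, which $\iota$ carries to $e(d_A(s))=E^A(s,\infty)$; thus $\iota$ maps the spectral family of $\mu(A)$ onto that of $A$, giving $\iota(\mu(A))=A$.

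For the atomic case the construction is identical in spirit, with the continuous chain $\{e(t)\}$ replaced by a discrete increasing sequence of projections whose successive trace increments equal the common trace of the atoms; matching these with the standard coordinate projections of $\ell_\infty$ produces the trace-preserving isomorphism $\ell_\infty\to\cM_0$, and the spectral-family matching yields $\iota(\mu(A))=A$ exactly as above. I expect the only genuinely delicate point throughout to be the atomless chain construction of the previous paragraph; the identification of $\cM_0$ with the model algebra and the two verifications are then formal consequences of trace preservation.
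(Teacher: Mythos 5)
The paper itself offers no proof of this theorem---it is quoted from \cite[Theorem 2.3.11]{LSZ} and \cite[Lemma 1.3]{CKS}---and your construction (filling the jumps of the distribution function $d_A$ with trace-continuous chains of projections obtained from atomlessness, then extending $\chi_{[0,t)}\mapsto e(t)$ to a normal trace-preserving $*$-isomorphism and matching spectral families to get $\iota(\mu(A))=A$) is exactly the argument of those references, so your proposal is correct and follows the same route. One slip to repair: as $s\downarrow 0$ the projections $E^{A}(s,\infty)$ increase to the support projection $s(A)$, not to $\mathbf{1}$, so when $A$ has a kernel of positive trace you must also apply your jump-filling device at the eigenvalue $s_0=0$ (i.e., choose a continuous chain of subprojections of $n(A)$ sweeping $[0,\tau(n(A))]$); otherwise the chain only reaches $\tau(s(A))$, the algebra $\cM_0$ it generates is isomorphic to $L_\infty(0,\tau(s(A)))$ rather than $L_\infty(0,\tau(\mathbf{1}))$, and the claimed isomorphism from $S(0,\tau(\mathbf{1}))$ fails.
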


In the following we choose an operator $0\leq A\in S_0(\cM,\tau)$, and fix the commutative algebra $\cM_0$ and the isomorphism $\iota$.

Now, we shall extend the Pietsch correspondence to general Calkin operator spaces.
Similarly to the case of functions we define the operator $\Phi:S(\cM,\tau)\to S(\Z)$ by setting
$$\Phi X=\{\mu(2^n, X)\}_{n\in\Z}, \quad X\in S(\cM,\tau).$$
The operator $\Phi$ is well-defined since $\mu(2^n,X)\leq \mu(1,X)$ for any $n\in\N$, and so $\Phi X\in S(\Z)$.

The `almost' inverse $\cD:S(\Z)\to S(\cM,\tau)$ of $\Phi$ is defined as follows 
\begin{equation}\label{eq_def_diag}
	\cD x=\iota Dx,\quad x\in S(\Z).
\end{equation}
The mapping $\cD$ is well-defined, since both mappings $D:S(\Z)\to S(0,\tau(\mathbf{1}))$ and $\iota:S (0,\tau(\mathbf{1}))\to S(\cM,\tau)$ are well-defined.

\begin{rem}Note that the choice of an operator $0\leq A\in S_0(\cM,\tau)$, and hence the choice of the `diagonal' algebra $\cM_0$ and the isomorphism $\iota$ in the `diagonal operator' $\cD$ is irrelevant for our construction. 
\end{rem}

\begin{thm}[Pietsch correspondence]
	\label{thm_corr_nc}Let $\cM$ be a semifinite von Neumann algebra equipped with a faithful normal semifinite trace $\tau$. 
	
	\begin{enumerate}
		\item If $\cM$ is atomless  with $\tau(\mathbf{1})=\infty$, then 
		\begin{align*}
			E(\Z)&:=\{x\in S(\Z): \cD x\in E(\cM,\tau)\}\\
			&=\{x\in S(\Z): (Dx)^*=\mu(X) \text{ for some } X\in E(\cM,\tau)\}
		\end{align*}
		and 
		\begin{align*}
			E(\cM,\tau):=\{X\in S(\cM,\tau): \Phi X\in E(\Z)\}
		\end{align*}
		defines a one-to-one correspondence $E(\Z)\leftrightarrows E(\cM,\tau)$ between Calkin  spaces on $\cM$ and shift-monotone sequence spaces in $S(\Z)$. 
		
		\item If $\cM$ is atomless  with $\tau(\mathbf{1})=1$, then 
		\begin{align*}
			E(\Z_-)&:=\{x\in S(\Z_-): \cD x\in E(\cM,\tau)\}\\
			&=\{x\in S(\Z_-): (Dx)^*=\mu(X) \text{ for some } X\in E(\cM,\tau)\}
		\end{align*}
		and 
		\begin{align*}
			E(\cM,\tau):=\{X\in S(\cM,\tau): \Phi X\in E(\Z_-)\}
		\end{align*}
		defines a one-to-one correspondence $E(\Z_-)\leftrightarrows E(\cM,\tau)$ between Calkin operator spaces on $\cM$ and shift-monotone sequence spaces in $S(\Z_-)$. 
		
		\item If $\cM$ is atomic (with all atoms of trace $1$), then 
		\begin{align*}
			E(\Z_+)&:=\{x\in S(\Z_+): \cD x\in E(\cM,\tau)\}\\
			&=\{x\in S(\Z_+): (Dx)^*=\mu(X) \text{ for some } X\in E(\cM,\tau)\}
		\end{align*}
		and 
		\begin{align*}
			E(\cM,\tau):=\{X\in S(\cM,\tau): \Phi X\in E(\Z_+)\}
		\end{align*}
		defines a one-to-one correspondence $E(\Z_+)\leftrightarrows E(\cM,\tau)$ between Calkin operator spaces  on $\cM$ and shift-monotone sequence spaces in $S(\Z_+)$. 
	\end{enumerate}
	
\end{thm}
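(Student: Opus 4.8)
The plan is to carry the proof of Theorem~\ref{thm_corr_commutative} over to the operator setting, transporting every commutative step through the two \emph{bridge identities}
$$\mu(\cD x)=(Dx)^*\quad(x\in S(\Z)),\qquad \Phi X=\{\mu(X)^*(2^n)\}_{n\in\Z}\quad(X\in S(\cM,\tau)).$$
The first follows from $\mu\circ\iota=\mu$ (Theorem~\ref{atomless_diagonal}) together with $\mu(Dx)=(Dx)^*$, so that $\mu(\cD x)=\mu(\iota Dx)=\mu(Dx)=(Dx)^*$; the second holds because $\mu(X)$ is already decreasing, whence $\mu(2^n,X)=\mu(X)^*(2^n)$ and the operator $\Phi$ coincides with the commutative $\Phi$ applied to the function $\mu(X)$. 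The two descriptions of $E(\Z)$ in the statement then agree: if $\cD x\in E(\cM,\tau)$ take $X=\cD x$ to get $(Dx)^*=\mu(X)$, while conversely $(Dx)^*=\mu(X)$ with $X\in E(\cM,\tau)$ forces $\mu(\cD x)=\mu(X)$, hence $\cD x\in E(\cM,\tau)$ by solidity of the Calkin space. Throughout I treat case (i); cases (ii) and (iii) are identical after replacing Lemma~\ref{order_vs_rear} and Lemma~\ref{exp_f_f} by their $(0,1)\leftrightarrows\Z_-$ and $\N\leftrightarrows\Z_+$ analogues (Remark~\ref{rem_different_approach}) and using the isomorphism $\iota$ of Theorem~\ref{atomless_diagonal} appropriate to $S(0,1)$, resp. $\ell_\infty$.

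First I would check that $E(\Z):=\{x:\cD x\in E(\cM,\tau)\}$ is shift-monotone. Linearity is immediate from linearity of $\cD$. For shift-invariance, the direct computation $D(S_+x)=\sigma_2(Dx)$ from \eqref{def_D} yields $\mu(\cD S_+x)=\sigma_2(Dx)^*=\sigma_2\mu(\cD x)$, so $S_+x\in E(\Z)$ once $E(\cM,\tau)$ is known to contain an operator with singular value function $\sigma_2\mu(\cD x)$. For solidity, if $o(y)\le o(x)$ with $x\in E(\Z)$, then Lemma~\ref{order_vs_rear} gives $\mu(\cD y)=(Dy)^*\le D(o(y))\le D(o(x))\le\sigma_2(Dx)^*=\sigma_2\mu(\cD x)$, and again solidity of $E(\cM,\tau)$ finishes the argument provided $\sigma_2\mu(\cD x)$ is realised in $E(\cM,\tau)$.

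Both steps reduce to the point I expect to be the main obstacle: \textbf{Calkin spaces are dilation invariant}, i.e.\ if $Z\in E(\cM,\tau)$ then some $W\in E(\cM,\tau)$ satisfies $\mu(W)=\sigma_2\mu(Z)$. This is exactly where the structural hypothesis on $\cM$ enters. Since $\cM$ is atomless (resp.\ atomic with atoms of equal trace) one may split the support into two orthogonal pieces carrying disjoint copies of $Z$, producing $A,B\in S(\cM,\tau)$ with $\mu(A)=\mu(B)=\mu(Z)$ (hence $A,B\in E(\cM,\tau)$ by solidity) and $\mu(A+B)=\sigma_2\mu(Z)$; linearity and solidity then give $W=A+B\in E(\cM,\tau)$. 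This is the operator form of the fact used tacitly in Theorem~\ref{thm_corr_commutative}. Conversely, $E(\cM,\tau):=\{X:\Phi X\in E(\Z)\}$ is a Calkin space: solidity is clear since $\mu(A)\le\mu(B)$ forces $\Phi A\le\Phi B$ with both sequences decreasing, so $o(\Phi A)\le o(\Phi B)$; and for linearity the submajorisation estimate \eqref{sum_rear} gives, exactly as in \eqref{eq_Phi_shift}, $\Phi(X+Y)\le S_+(\Phi X+\Phi Y)$, whence shift-invariance and solidity of $E(\Z)$ yield $\Phi(X+Y)\in E(\Z)$.

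Finally I would verify that the two assignments are mutually inverse, reproducing Lemma~\ref{exp_f_f}. Since $\Phi X=\Phi(\mu(X))$ with $\mu(X)$ decreasing, the inequalities \eqref{f_via_expec} and \eqref{expec_via_f} give $\mu(X)\le\mu(\cD\Phi X)\le\sigma_2\mu(X)$; combined with solidity and dilation invariance this shows $\{X:\cD\Phi X\in E(\cM,\tau)\}=E(\cM,\tau)$, which is the identity in the Calkin-to-sequence-and-back direction. In the other order, the bridge identity and Lemma~\ref{order_vs_rear} give $\Phi\cD x=\{(Dx)^*(2^n)\}_{n\in\Z}=o(x)$, so $\{x:\Phi\cD x\in E(\Z)\}=\{x:o(x)\in E(\Z)\}=E(\Z)$ by shift-monotonicity (Lemma~\ref{exp_f_f}(ii)). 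This establishes the bijective correspondence in case (i), and the same scheme, with the modifications noted above, gives (ii) and (iii).
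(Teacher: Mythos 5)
Your proposal is correct, but it follows a more self-contained route than the paper. The paper's own proof is a two-step composition: it sets $E(0,\infty):=\iota^{-1}\bigl(E(\cM,\tau)\cap S(\cM_0,\tau|_{\cM_0})\bigr)$, identifies this with $\{f\in S(0,\infty): f^*=\mu(X)\text{ for some }X\in E(\cM,\tau)\}$ by invoking the classical semifinite Calkin correspondence (the proof of \cite[Theorem 2.5.3]{LSZ}), and then applies the commutative Theorem~\ref{thm_corr_commutative} to obtain $E(\Z)\leftrightarrows E(0,\infty)$; all the genuinely operator-theoretic work (in particular, that the operator space attached to a function space is \emph{linear}) is thereby outsourced to the cited literature. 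You instead rerun the commutative argument directly at the operator level through the bridge identities $\mu(\cD x)=(Dx)^*$ and $\Phi X=\Phi(\mu(X))$, proving linearity of $\{X:\Phi X\in E(\Z)\}$ from $\Phi(X+Y)\le S_+(\Phi X+\Phi Y)$, and you isolate the one operator-theoretic ingredient this requires: \emph{dilation invariance} of Calkin operator spaces (if $Z\in E(\cM,\tau)$ then some $W\in E(\cM,\tau)$ has $\mu(W)=\sigma_2\mu(Z)$). This is a genuine merit of your write-up: the same fact is used tacitly in the paper's proof of Theorem~\ref{thm_corr_commutative} (the unjustified step ``$\sigma_2(Dx)^*\in E(0,\infty)$'') and is buried inside the cited Calkin correspondence. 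The paper's route buys brevity; yours buys a self-contained argument in which the structural hypothesis on $\cM$ (atomless, or atomic with atoms of equal trace) visibly does its work.

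One step needs repair in wording, though the idea is sound. The phrase ``split the support [of $Z$] into two orthogonal pieces carrying disjoint copies of $Z$'' cannot be taken literally: compressing $Z$ to two halves of its support does not produce operators equimeasurable with $Z$. What you need (and clearly intend) is a pair of \emph{disjointly supported} operators, each equimeasurable with $Z$, and the clean way to get them is the diagonal algebra of Theorem~\ref{atomless_diagonal}: choose disjointly supported $f,g\in S(0,\infty)$ with $f^*=g^*=\mu(Z)$ (interleave $\mu(Z)$ over alternating intervals; in the atomic case interleave the sequence, which is where equality of the atoms' traces is used), and set $A=\iota f\ge0$, $B=\iota g\ge0$. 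Then $AB=0$, so $d_{A+B}=d_{A}+d_{B}=2d_{|Z|}$ and hence $\mu(A+B)=\sigma_2\mu(Z)$, while solidity of $E(\cM,\tau)$ gives $A,B\in E(\cM,\tau)$ and linearity gives $W:=A+B\in E(\cM,\tau)$, exactly as you claim. With that made precise, the remaining steps (shift-monotonicity of $\{x:\cD x\in E(\cM,\tau)\}$, the Calkin property of $\{X:\Phi X\in E(\Z)\}$, and the two inverse identities $\Phi\cD x=o(x)$ and $\mu(X)\le\mu(\cD\Phi X)\le\sigma_2\mu(X)$) all check out.
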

\begin{proof}
	We give a proof of the case (i) only. Case (ii) can be proved similarly. Case (iii) was proved in \cite[Theorem 4.7]{P_trI}.

	As in the proof of \cite[Theorem 2.5.3]{LSZ} we set $$E(0,\infty):=\iota^{-1}\big(E(\cM,\tau)\cap S(\cM_0,\tau|_{\cM_0})\big).$$ It is clear that $E(0,\infty)$ is a
	Calkin function space.
	In addition, $E(0,\infty)$ can be described as
	$$E(0,\infty)=\{f\in S(0,\infty):f^*=\mu(X) \text{ for some } X\in E(\cM,\tau)\}.$$
	%
	
	Theorem \ref{thm_corr_commutative} yields a bijective correspondence between shift-monotone space $E(\Z)$ and Calkin function space $E(0,\infty)$ with 
	$$E(\Z)=\{x\in S(\Z): Dx\in E(0,\infty)\}.$$
	
	Since 
	\begin{align*}
		&\{x\in S(\Z): \cD x\in E(\cM,\tau)\}\\
		&= \{x\in S(\Z): (Dx)^*=\mu(X) \text{ for some } X\in E(\cM,\tau)\},
	\end{align*}
	it follows that
	\begin{align*}
		E(\Z)&=\{x\in S(\Z): \cD x\in E(\cM,\tau)\}\\
		&= \{x\in S(\Z): (Dx)^*=\mu(X) \text{ for some } X\in E(\cM,\tau)\},
	\end{align*}
	is a shift-monotone sequence space. The converse follows from  Theorem \ref{thm_corr_commutative}.
\end{proof}

\begin{rem}\label{rem_diff_types}
	\begin{enumerate}
		\item 
		In the following we do not distinguish between atomless (both finite and infinite trace) and atomic cases and simply use notations $E(\Z)$ for all three cases in Theorem \ref{thm_corr_nc} making agreement that $S(\Z_-),S(\Z_+)\subset S(\Z)$ using natural identification.

		\item If $\cM$ is an arbitrary (not necessarily atomless or atomic) von Neumann algebra, equipped with a faithful normal semifinite trace $\tau$, one can construct Calkin spaces from shift-monotone spaces in the same way. Namely, if $E(\Z)$ is a shift-monotone space, then 
		$$E(\cM,\tau)=\{X\in \smt:\Phi X\in E(\Z)\},$$
		defines a Calkin space (with the argument verbatim to the proof of Theorem \ref{thm_corr_commutative}). The lack of bijectivity in the Pietsch correspondence in the nontomic, nonatomless case stems from the absence, in general, of a `diagonal' algebra (the algebra $\cM_0$ of Theorem \ref{atomless_diagonal}). 
	\end{enumerate}
\end{rem}

\section{Construction of traces}\label{sec_Pie_corr_traces}

In this section we adapt the method suggested by A. Pietsch \cite{P_trIII} for construction of traces on ideals of compact operator on a Hilbert space to the setting of a semifinite von Neumann algebra. From now on we assume that $E(\Z)\leftrightarrows E(\cM,\tau)$ are associated spaces via Theorem \ref{thm_corr_nc}.

We start with the dyadic representation of an operator (see \cite[Section 4]{P_trIII}). 
\begin{defn}Let $E(\Z)$ be a shift-monotone sequence space and let $X\in \smt$. An infinite series representation 
	$$X=\sum_{k\in\Z} X_k,$$
	where the convergence is understood in the measure topology, 
	is called an $E(\Z)$-dyadic representation of $X$ if $X_k\in \cF(\cM,\tau),$ $\tau(s(X_k))\leq 2^k$ and $$\Big\{\big\| X-\sum_{k=-\infty}^nX_k\big\|\Big\}_{n\in\Z}\in E(\Z).$$
\end{defn}

\begin{lem}Let $X\in \smt$. Then $X\in E(\cM,\tau)$ if and only if there exists an $E(\Z)$-dyadic representation of $X$. 
\end{lem}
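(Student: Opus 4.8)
The plan is to prove both implications through the correspondence $E(\Z)\leftrightarrows E(\cM,\tau)$ of Theorem \ref{thm_corr_nc}, recalling that $X\in E(\cM,\tau)$ is equivalent to $\Phi X=\{\mu(2^n,X)\}_{n\in\Z}\in E(\Z)$, and cutting the dyadic blocks out of the spectral decomposition of $|X|$.

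\emph{The sufficiency (existence of a representation implies membership)} is the easy direction and uses only the quasi-triangle inequality \eqref{sum_rear}. Suppose $X=\sum_{k\in\Z}X_k$ is an $E(\Z)$-dyadic representation, and set $F_n=\sum_{k=-\infty}^nX_k$ and $R_n=X-F_n$. Since $s(F_n)\le\bigvee_{k\le n}s(X_k)$, subadditivity of $\tau$ on projections gives $\tau(s(F_n))\le\sum_{k\le n}2^k=2^{n+1}$; taking $P=\mathbf{1}-s(F_n)$ in \eqref{def_mu} shows $\mu(2^{n+1},F_n)=0$. Applying \eqref{sum_rear} to $X=F_n+R_n$ at $t=2^{n+1}$ yields $\mu(2^{n+2},X)\le\mu(2^{n+1},F_n)+\mu(2^{n+1},R_n)\le\|R_n\|_\infty$, so that $\Phi X\le S_+^2\{\|R_n\|_\infty\}_{n\in\Z}$ pointwise. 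As $\{\|R_n\|_\infty\}\in E(\Z)$ and $E(\Z)$ is shift-monotone, the right-hand side lies in $E(\Z)$; since $\Phi X$ is decreasing, condition (i) of Definition \ref{def_si} gives $\Phi X\in E(\Z)$, i.e. $X\in E(\cM,\tau)$.

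\emph{The necessity (construction of a representation)} is where the real work lies. Given $X\in E(\cM,\tau)$, write $X=U|X|$ and put $p_n=E^{|X|}(\mu(2^n,X),\infty)$. The description of $\mu$ through the distribution function gives $\tau(p_n)\le2^n$, the family $\{p_n\}$ increases in $n$, and $\tau(p_n)\le2^n\to0$ forces $p_n\downarrow0$ as $n\to-\infty$ by faithfulness and normality of $\tau$. Define $X_k=X(p_k-p_{k-1})$; then $X_k(p_k-p_{k-1})=X_k$ forces $s(X_k)\le p_k-p_{k-1}$, whence $X_k\in\cF(\cM,\tau)$ and $\tau(s(X_k))\le\tau(p_k)\le2^k$. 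The partial sums telescope, $\sum_{k=-\infty}^nX_k=Xp_n$, so $R_n=X(\mathbf{1}-p_n)$ with $\mathbf{1}-p_n=E^{|X|}[0,\mu(2^n,X)]$, giving $\|R_n\|_\infty\le\mu(2^n,X)=(\Phi X)_n$. Since $\{\|R_n\|_\infty\}$ is decreasing and dominated by $\Phi X\in E(\Z)$, solidity of $E(\Z)$ yields $\{\|R_n\|_\infty\}\in E(\Z)$, which is the tail condition in the definition of a dyadic representation.

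\emph{The main obstacle} is verifying that $\sum_{k\in\Z}X_k=X$ in the measure topology, i.e. that $\sum_{k=-m}^nX_k=X(p_n-p_{-m-1})\to X$. The left tail is harmless: $Xp_{-m}$ has support of trace $\le2^{-m}$, so $\mu(t,Xp_{-m})=0$ once $t\ge2^{-m}$, whence $Xp_{-m}\to0$ in measure. The right tail is delicate, since $\|R_n\|_\infty\le\mu(2^n,X)\to\mu(\infty,X)$ and $R_n\to0$ (in $\|\cdot\|_\infty$, hence in measure) exactly when $\mu(\infty,X)=0$, that is when $X\in S_0(\cM,\tau)$. This reflects a genuine constraint: any measure-convergent series of operators in $\cF(\cM,\tau)$ has its limit in $\overline{\cF(\cM,\tau)}=S_0(\cM,\tau)$, so a dyadic representation can only exist for $\tau$-compact $X$. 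I would therefore record at the outset that the statement is read within $S_0(\cM,\tau)$, after which $p_n\uparrow s(|X|)$ gives $Xp_n\to X$ and closes the argument; dealing cleanly with this compactness point, rather than the algebraic estimates, is the step I expect to require the most care.
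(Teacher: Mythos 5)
Your proof is correct and, in the necessity direction, takes a genuinely different (and cleaner) route than the paper. The sufficiency half is essentially the paper's argument: both bound $\tau\bigl(s\bigl(\sum_{k\le n}X_k\bigr)\bigr)$ by $2^{n+1}$ via subadditivity of $\tau$ on suprema of supports, and deduce that $\Phi X$ is dominated by a shift of the tail sequence (the paper gets $\mu(2^{n+1},X)\le\bigl\|X-\sum_{k\le n}X_k\bigr\|$ directly from \eqref{def_mu} rather than through \eqref{sum_rear}, which saves one shift, but this is immaterial since $E(\Z)$ is shift-monotone). For necessity the paper does not use spectral projections: it extracts, by a contradiction argument from \eqref{def_mu}, projections $P_k$ with $\tau(P_k)\le 2^k$ and $\|X(\mathbf{1}-P_k)\|\le 2\mu(2^k,X)$ (taking $P_k=s(X)$ when $\mu(2^k,X)=0$), and sets $X_k=X(P_{k-1}-P_{k-2})$; these $P_k$ need not be nested, but the sum still telescopes to $XP_{n-1}$. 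Your choice $p_k=E^{|X|}(\mu(2^k,X),\infty)$ buys nested projections, honest differences $p_k-p_{k-1}$, and the constant $1$ instead of $2$ in the remainder estimate, at the price of invoking the standard fact $d_{|X|}(\mu(t,X))\le t$ coming from right-continuity of the distribution function. Both constructions are legitimate.

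The most valuable part of your write-up is the point the paper glosses over: neither its construction nor yours can converge in the measure topology to $X$ unless $\mu(\infty,X)=0$, since every partial sum lies in $\cF(\cM,\tau)\subset S_0(\cM,\tau)$ and $S_0(\cM,\tau)$ is closed in that topology. The paper's proof never verifies the convergence required by the definition of a dyadic representation, and indeed the statement as written fails for, e.g., $X=\mathbf{1}\in E(\cM,\tau)=\cM$ when $\tau(\mathbf{1})=\infty$. Your repair --- reading the lemma within $S_0(\cM,\tau)$ --- is exactly the standing hypothesis $E(\cM,\tau)\subset S_0(\cM,\tau)$ that the paper imposes in Theorem \ref{thm_Pietsch_trace} and declares non-restrictive in the remark following it, so your version is the one consistent with how the lemma is actually used.
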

\begin{proof}Suppose firstly that $X\in E(\cM,\tau)$.
	We claim that there exists a sequence $\{P_k\}_{k\in\Z}\subset P(\cM)$, such that 
	$$\|X(\textbf{1}-P_k)\|\leq 2\mu(2^k, X),\quad \tau(P_k)\leq 2^k.$$

	Fix $k\in \Z$ and suppose firstly that $\mu(2^k,X)>0$. Assume to the  contrary that for any projection $Q$ with $\tau(Q)\leq 2^k$, we have the following inequality: $\|X(\mathbf{1}-Q)\|> 2\mu(2^k,X)$. Then, by the definition of the singular value function (see \eqref{def_mu}), we have that 
	$$\mu(2^k, X)=\inf \{\|X(\mathbf{1}-Q)\|: Q\in P(\cM), \tau(Q)\leq 2^k\}\geq 2\mu(2^k,X).$$
	Since $\mu(2^k,X)> 0$, we arrive at the contradiction.  On the other hand, if $\mu(2^k,X)=0$, then $\tau(s(X))\leq 2^k$. Therefore, one can take $P_k=s(X)$ since $X(\mathbf{1}-s(X))=0.$ Thus, the existence of the required sequence $\{P_k\}$ is guaranteed.

	We set 
	$$X_k=X(P_{k-1}-P_{k-2}).$$ Since $\tau(P_k)\leq 2^k,$ it follows that $\tau(s(X_k))\leq \tau(P_{k-1})+\tau(P_{k-2})\leq 2^k$, in particular, $X_k\in \cF(\cM,\tau)$. Furthermore, 
	\begin{align*}
		\big\|X-\sum_{k=-\infty}^n X_k\big\|&=\big\|X-\sum_{k=-\infty}^n X(P_{k-1}-P_{k-2})\big\|\\
		&=\big\|X-XP_{n-1}\big\|\leq 2\mu(2^{n-1},X).
	\end{align*}
	
	By assumption $X\in E(\cM,\tau)$, and so, by Theorem \ref{thm_corr_nc} the sequence $\Phi X=\{\mu(2^n, X)\}_{n\in\Z}$ belongs to $E(\Z)$. Since $E(\Z)$ is shift-monotone, it follows that $\{\mu(2^{n-1}, X)\}_{n\in\Z}\in E(\Z)$ and so,
	$$\Big\{\big\| X-\sum_{k=-\infty}^nX_k\big\|\Big\}_{n\in\Z}\in E(\Z),$$
	as required.  
	
	Conversely, let $X=\sum_{k\in\Z} X_k$ be an $E(\Z)$-dyadic representation. 
	By definition of the support projection, we have that $s(\sum_{k=-\infty}^n X_k)\leq \sup_{k\leq n} s(X_k).$ Therefore, the assumption $\tau(s(X_k))\leq 2^k$ implies that 
	$$\tau(s(\sum_{k=-\infty}^n X_k))\leq \tau(\sup_{k\leq n} s(X_k))=\sum_{k\leq n} 2^k\leq 2^{n+1}.$$ 
	Hence, by definition of the singular value function we infer that 
	$\Phi X=\mu(2^n, X)\leq \|X-\sum_{k=-\infty}^n X_k\|,$ and so $\Phi X\in E(\Z)$. By Theorem \ref{thm_corr_nc}, we obtain that $X\in E(\cM,\tau)$. 
	
\end{proof}

The following result is a semifinite analogue of \cite[Lemma2]{P_trIII}.

\begin{lem}\label{lem_sum_dyadic}
	Let $X,Y\in E(\cM,\tau)$  with $E(\Z)$-dyadic representations
	$$X=\sum_{k\in\Z} X_k, \quad Y=\sum_{k\in\Z} Y_k.$$
	Then the operator $X+Y$ has an $E(\Z)$-dyadic representation 
	$$X+Y=\sum_{k\in\Z} X_{k-1}+Y_{k-1}.$$
\end{lem}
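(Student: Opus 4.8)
The plan is to verify directly that the proposed series is an $E(\Z)$-dyadic representation by checking the three defining conditions for the terms $Z_k:=X_{k-1}+Y_{k-1}$, together with convergence in measure. The only genuinely load-bearing idea is the index shift: pairing $X_{k-1}$ with $Y_{k-1}$ (rather than $X_k$ with $Y_k$) is exactly what is needed to keep the support bound at the level $2^k$.

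First I would check the finite-rank and support conditions. Since $X_{k-1},Y_{k-1}\in\cF(\cM,\tau)$, their sum $Z_k$ again lies in $\cF(\cM,\tau)$, and by the subadditivity of support projections recalled in Section~\ref{prel} (namely $s(A+B)\le s(A)\vee s(B)$) together with $\tau(p\vee q)\le\tau(p)+\tau(q)$, I would estimate
\[
\tau(s(Z_k))\le\tau\big(s(X_{k-1})\vee s(Y_{k-1})\big)\le\tau(s(X_{k-1}))+\tau(s(Y_{k-1}))\le 2^{k-1}+2^{k-1}=2^k .
\]
This is where the shift is forced: without it the bound would only give $2^{k+1}$.

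Next I would handle the tail condition by reindexing the partial sums. Writing $\sum_{k=-\infty}^n Z_k=\sum_{k=-\infty}^{n-1}(X_k+Y_k)$, linearity gives
\[
X+Y-\sum_{k=-\infty}^n Z_k=\Big(X-\sum_{k=-\infty}^{n-1}X_k\Big)+\Big(Y-\sum_{k=-\infty}^{n-1}Y_k\Big),
\]
so that, setting $a_n:=\big\|X-\sum_{k=-\infty}^{n}X_k\big\|$ and $b_n:=\big\|Y-\sum_{k=-\infty}^{n}Y_k\big\|$, the triangle inequality yields
\[
\Big\|X+Y-\sum_{k=-\infty}^n Z_k\Big\|\le a_{n-1}+b_{n-1},\qquad n\in\Z .
\]
By hypothesis $\{a_n\}_{n\in\Z},\{b_n\}_{n\in\Z}\in E(\Z)$, hence $\{a_{n-1}+b_{n-1}\}_{n\in\Z}=S_+\{a_n\}+S_+\{b_n\}\in E(\Z)$ because $E(\Z)$ is a linear subspace closed under $S_+$. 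Since the tail sequence $c_n:=\big\|X+Y-\sum_{k=-\infty}^n Z_k\big\|$ is nonnegative and dominated pointwise by $a_{n-1}+b_{n-1}$, we get $o(c)\le o(S_+\{a_n\}+S_+\{b_n\})$, and the solidity part (i) of Definition~\ref{def_si} gives $\{c_n\}_{n\in\Z}\in E(\Z)$, as required.

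Finally I would note that convergence of $\sum_{k\in\Z}Z_k$ to $X+Y$ in the measure topology follows from the convergence of the two given representations and the continuity of addition in $t_\tau$. The main obstacle is essentially bookkeeping rather than a deep difficulty: one must keep the reindexing consistent so that the support estimate lands on $2^k$ and the tail sequence comes out as a shift of $\{a_n\}+\{b_n\}$; once the shift is placed correctly, both the support-additivity bound and the shift-monotonicity of $E(\Z)$ do the rest.
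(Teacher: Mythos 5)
Your proof is correct and takes essentially the same route as the paper's: the same index shift giving the support bound $\tau(s(X_{k-1}+Y_{k-1}))\leq \tau(s(X_{k-1}))+\tau(s(Y_{k-1}))\leq 2^k$, the same tail estimate identifying the dominating sequence as $S_+\{a_n\}+S_+\{b_n\}\in E(\Z)$, and the same appeal to $(S(\cM,\tau),t_\tau)$ being a topological algebra for the convergence of the reindexed series. Your explicit invocation of the ordering numbers and solidity (Definition \ref{def_si}(i)) at the end merely spells out a step the paper leaves implicit.
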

\begin{proof}
	The equality $X+Y=\sum_{k\in\Z} X_{k-1}+Y_{k-1}$ follows from the fact that\\ $(\smt, t_\tau)$ is a topological algebra. Since 
	$$\tau(s(X_{k-1}+Y_{k-1}))\leq \tau(s(X_{k-1}) \vee s(Y_{k-1}))\leq \tau(s(X_{k-1}))+\tau(s(Y_{k-1}))\leq 2^k,$$
	and 
	\begin{align*}
		\|X+Y&-\sum_{k=\infty}^n (X_{k-1}+Y_{k-1})\|\}_{n\in\Z}\\
		&\leq \{\|X-\sum_{k=-\infty}^{n-1}X_k\|\}_{n\in\Z}+\{\|Y-\sum_{k=-\infty}^{n-1}Y_k\|\}_{n\in\Z}\\
		&=S_+\{\|X-\sum_{k=-\infty}^{n}X_k\|\}_{n\in\Z}+S_+\{\|Y-\sum_{k=-\infty}^{n}Y_k\|\}_{n\in\Z}\in E(\Z),
	\end{align*}
	it follows that $X+Y=\sum_{k\in\Z} X_{k-1}+Y_{k-1}$ is indeed $E(\Z)$-dyadic representation on $X+Y$. 
\end{proof}

In the following theorem we show that any $\frac12 S_+$-invariant functional on $E(\Z)$ gives rise to a symmetric functional on the corresponding Calkin space $E(\cM,\tau)$. This provides a semifinite version of \cite[Theorem 4]{P_trIII}. 
\begin{thm}\label{thm_Pietsch_trace}Let $E(\Z)\rightleftarrows E(\cM,\tau)$ be associated space such that $E(\cM,\tau)\subset S_0(\cM,\tau)$. Assume that $\theta$ is a $\frac12 S_+$-invariant functional on $E(\Z)$. Define 
	\begin{equation}\label{eq_Pie_trace_from_shift}
		\phi (X)=\theta\Big(\{ \frac1{2^k}\tau(X_k)\}_{k\in \Z}\Big),
	\end{equation}
	where $X=\sum_{k\in\Z} X_k$ is an $E(\Z)$-dyadic representation of $X\in E(\cM,\tau)$. Then $\phi$ is a symmetric functional on the corresponding Calkin space $E(\cM,\tau)$. 
\end{thm}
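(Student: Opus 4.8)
The plan is to establish, in order, that (1) for any $E(\Z)$-dyadic representation $X=\sum_{k\in\Z}X_k$ the sequence $t^X:=\{2^{-k}\tau(X_k)\}_{k\in\Z}$ lies in $E(\Z)$, so that $\phi(X)$ is a well-defined scalar; (2) this scalar is independent of the chosen representation; (3) $\phi$ is linear; and (4) $\phi$ depends on $X$ only through $\mu(X)$. For (1), write $R_n=X-\sum_{k\le n}X_k$ and $r_n=\|R_n\|_\infty$, so that $\{r_n\}_{n\in\Z}\in E(\Z)$ by definition and $X_k=R_{k-1}-R_k$. Since $X_k\in\cF(\cM,\tau)$ with $\tau(s(X_k))\le 2^k$, we get $2^{-k}|\tau(X_k)|\le\|X_k\|_\infty\le r_{k-1}+r_k$, whence $o(t^X)\le 2S_+o(\{r_n\})$; as $E(\Z)$ is shift-monotone (hence stable under $o(\cdot)$ and $S_+$) and solid, this forces $t^X\in E(\Z)$ and $\theta(t^X)$ to be defined.

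The main obstacle is step (2), and this is exactly where $\tfrac12 S_+$-invariance of $\theta$ is consumed. Given two representations $X=\sum_k X_k=\sum_k X_k'$, set $Z_k=X_k-X_k'$ and $W_n=\sum_{k\le n}Z_k$. Then $W_n$ is the difference of the two tails, so $\|W_n\|_\infty\le r_n+r_n'\in E(\Z)$, while $\tau(s(W_n))\le\sum_{k\le n}\tau(s(Z_k))\le 2^{n+2}$ shows $W_n\in\cF(\cM,\tau)$ and, by solidity, $w:=\{2^{-k}\tau(W_k)\}_{k\in\Z}\in E(\Z)$ (using $|w_k|\le 4\|W_k\|_\infty$). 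The telescoping identity $\tau(Z_k)=\tau(W_k)-\tau(W_{k-1})$ rewrites as $t^X-t^{X'}=\{2^{-k}\tau(Z_k)\}=w-\tfrac12 S_+w=(I-\tfrac12 S_+)w$, so by $\tfrac12 S_+$-invariance
\[
\theta(t^X)-\theta(t^{X'})=\theta\big((I-\tfrac12 S_+)w\big)=\theta(w)-\theta(\tfrac12 S_+w)=0 .
\]
Hence $\phi(X)$ is well-defined. I expect the measure-topology convergence of $W_n$ and the membership $w\in E(\Z)$ to be the only delicate bookkeeping here.

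Linearity then comes cheaply from well-definedness. Homogeneity is immediate, since $cX=\sum_k cX_k$ is again a dyadic representation and $\theta$ is linear. For additivity I would invoke Lemma \ref{lem_sum_dyadic}: using the representation $X+Y=\sum_k(X_{k-1}+Y_{k-1})$ and the identity $2^{-k}\tau(X_{k-1})=\tfrac12(S_+t^X)_k$, the defining sequence of $X+Y$ equals $\tfrac12 S_+(t^X+t^Y)$, so $\tfrac12 S_+$-invariance yields $\phi(X+Y)=\theta(t^X+t^Y)=\phi(X)+\phi(Y)$.

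For symmetry it suffices, by well-definedness, to produce for each $0\le X\in E(\cM,\tau)$ a single dyadic representation whose trace data are functions of $\mu(X)$ alone. Choosing the trace-preserving $*$-isomorphism $\iota$ of Theorem \ref{atomless_diagonal} adapted to $A=X$, so that $\iota(\mu(X))=X$ (this is where $X\in S_0(\cM,\tau)$ is used), set $X_k=\iota\big(\mu(X)\,\chi_{[2^{k-1},2^k)}\big)$. Then $\tau(s(X_k))\le 2^{k-1}$, the tails satisfy $\|X-\sum_{k\le n}X_k\|_\infty=\mu(2^n,X)$ so that $\{\mu(2^n,X)\}_{n\in\Z}=\Phi X\in E(\Z)$, and $\tau(X_k)=\int_{2^{k-1}}^{2^k}\mu(s,X)\,ds$. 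Consequently
\[
\phi(X)=\theta\Big(\Big\{\tfrac1{2^k}\int_{2^{k-1}}^{2^k}\mu(s,X)\,ds\Big\}_{k\in\Z}\Big),
\]
which depends on $X$ only through $\mu(X)$; thus $0\le X,Y$ with $\mu(X)=\mu(Y)$ give $\phi(X)=\phi(Y)$. The same construction applies verbatim in the atomless finite and atomic cases of Theorem \ref{thm_corr_nc}.
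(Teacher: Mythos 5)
Your proof is correct and follows essentially the same route as the paper: the same telescoping argument writing the difference of the two trace sequences as $(I-\tfrac12 S_+)w$ (the paper's sequence $b$ is exactly your $w$) to prove well-definedness, the same use of Lemma \ref{lem_sum_dyadic} together with $\tfrac12 S_+$-invariance for additivity, and the same construction of a dyadic representation through the trace-preserving isomorphism of Theorem \ref{atomless_diagonal} for symmetry. The only differences are cosmetic: you make explicit the preliminary check that $\{2^{-k}\tau(X_k)\}_{k\in\Z}\in E(\Z)$, which the paper leaves implicit, and you deduce symmetry from a single formula for $\phi(X)$ in terms of $\mu(X)$ rather than comparing the two representations $\iota_X(\mu(X)\chi_{[2^n,2^{n+1})})$ and $\iota_Y(\mu(X)\chi_{[2^n,2^{n+1})})$ directly.
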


\begin{rem}The assumption that $E(\cM,\tau)\subset S_0(\cM,\tau)$ is not restrictive. Indeed, if $E(\cM,\tau)$ contains an operator, which is not $\tau$-compact, then since $E(\cM,\tau)$ is a Calkin space, it contains the whole algebra $\cM$ and the trace $\tau$ is necessarily infinite. As shown in \cite[Theorem 5]{SZ_Crelle} there are no symmetric functionals on $\cM$ with infinite trace $\tau$. In particular, it means that there are no symmetric functionals on $E(\cM,\tau)$. 
\end{rem}

\begin{proof}
	We firstly show that $\phi:E(\cM,\tau)\to \C$ is well-defined. Let $X\in E(\cM,\tau)$ be fixed and let  $X=\sum_{k\in\Z} X_k=\sum_{k\in\Z} Y_k,$ be two $E(\Z)$-dyadic representations of $X$. Let $a=\{\frac1{2^k}\tau(X_k-Y_k)\}_{k\in\Z}$. It is sufficient to show that $a\in \mathrm {Range}(1-\frac12 S_+)$.

	We claim firstly that the sequence 
	$$b=\Big\{\frac1{2^k} \tau\Big(\sum_{n=-\infty}^k (X_n-Y_n)\Big)\Big\}_{k\in\Z}$$ 
	belongs to $S(\Z)$. Since $\cL_1(\cM,\tau)$ is an $\cM$-bimodule it is sufficient to show that for every fixed $k\in\Z$, the operator $\sum_{n=-\infty}^k (X_n-Y_n)$ is bounded and has finite support. 
	
	Fix $k\in \Z$. 
	Since $\tau(s(X_k))\leq 2^k$, it follows that
	$$\tau\Big(s\big(\sum_{n=-\infty}^k X_n\big)\Big)\leq \tau\Big(\sup_{n\leq k} s(X_n)\Big)\leq \sup_{n\leq k} 2^n=2^k<\infty,$$

	and similarly, $\tau\Big(s\big(\sum_{n=-\infty}^k Y_n\big)\Big)=2^k<\infty.$ This guarantees that
	\begin{equation}\label{eq_b_fin_supp}
		\tau\left(s\Big(\sum_{n=-\infty}^k (X_n-Y_n)\Big)\right)\leq 2^{k+1}<\infty
	\end{equation}
	Furthermore, since 
	$$\Big\|\sum_{n=-\infty}^k (X_n-Y_n)\Big\|\leq \Big\|X-\sum_{n=-\infty}^k X_n\Big\|+\Big\|X-\sum_{n=-\infty}^k Y_n\Big\|<\infty,$$
	it follows that 
	$\sum_{n=-\infty}^k(X_n-Y_n)\in \cL_1(\cM,\tau)$. Thus, $b$ is well-defined element in $S(\Z)$.

	Moreover, we have estimate
	\begin{align*}
		|b|&= \Big\{\frac1{2^k} \Big|\tau \big(\sum_{n=-\infty}^k (X_n-Y_n)\big)\Big|\Big\}_{k\in\Z}\\
		&\leq \Big\{\frac1{2^k} \Big\|\sum_{n=-\infty}^k (X_n-Y_n)\Big\|\cdot  \tau\Big(s\big(\sum_{n=-\infty}^k X_n-Y_n)\big)\Big)\Big\}_{k\in\Z}\\
		&\stackrel{\eqref{eq_b_fin_supp}}{\leq} 2\Big( \Big\{  \big\|X-\sum_{n=-\infty}^k X_n\big\|\Big\}_{k\in\Z}+\Big\{\big\|X-\sum_{n=-\infty}^k Y_n\big\|\Big\}_{k\in\Z}\Big)\in E(\Z),
	\end{align*}
	that is, $b\in E(\Z)$. Trivial computation shows that $a=b-\frac12 S_+ b$. Hence, 
	$$\theta(a)=\theta(b)-\theta(\frac12 S_+ b)=0,$$
	as required. Thus, $\theta(\{\frac1{2^k}\tau(X_k)\}_{k\in\Z})=\theta(\{\frac1{2^k}\tau(Y_k)\}_{k\in\Z})$, so that $\phi(X)$ is well-defined.

	Next, we show that $\phi$ is indeed a symmetric functional.
	To prove linearity of $\phi$ we note that for any $X,Y\in E(\cM,\tau)$, Lemma \ref{lem_sum_dyadic} implies that $X+Y=\sum_{k\in\Z} (X_{k-1}+Y_{k-1})$ is $E(\Z)$-representation of $X+Y$. Therefore, since $\theta$ is $\frac12 S_+$-invariant, we have  
	\begin{align*}
		\phi(X+Y)&=\theta\Big(\big\{\frac1{2^k}\tau(X_{k-1}+Y_{k-1})\big\}_{k\in\Z}\Big)\\
		&=\theta\Big(\frac 12 S_+\big( \big\{\frac1{2^k}\tau(X_k)\big\}+\big\{\frac1{2^k}\tau(Y_k)\big\}\big)\Big)\\
		&=\theta\Big(\big\{\frac1{2^k}\tau(X_k)\big\}\Big)+\theta\Big(\big\{\frac1{2^k}\tau(Y_k)\big\}\Big)=\phi(X)+\phi(Y).
	\end{align*}
	Thus, $\phi:E(\cM,\tau)\to \C$ is a linear mapping.

	Now, let $0\leq X,Y\in E(\cM,\tau)$ be such that $\mu(X)=\mu(Y)$. Since $E(\cM,\tau)\subset S_0(\cM,\tau)$, it follows that $\mu(\infty, X)=\mu(\infty, Y)=0$. Let $\iota_X$ and $\iota_Y$ be $*$-isomorphisms of Theorem \ref{atomless_diagonal}, such that $\iota_X(\mu(X))=X$ and $\iota_Y(\mu(Y))=Y$. Note that $\mu(X)=\sum_{n\in\Z} \mu(X)\chi_{[2^n, 2^{n+1})}$ is a $E(\Z)$-dyadic decomposition of $\mu(X)\in E(0,\infty)$, since 
	\begin{align*}
		\Big\{\Big\|\mu(X)&-\sum_{n=-\infty}^k \mu(X)\chi_{[2^n, 2^{n+1})}\Big\|\Big\}_{k\in \Z}=\{\|\mu(X)\chi_{[2^{k+1}, \infty)}\|\}_{k\in\Z}\\
		&\leq \{\mu(2^{k+1}, X)\}_{k\in\Z}\in E(\Z).
	\end{align*}
	We set
	$$X_n=\iota_X(\mu(X)\chi_{[2^n,2^{n+1})}), \quad Y_n=\iota_Y (\mu(X)\chi_{[2^n,2^{n+1})}).$$
	Since both $\iota_X$ is trace-preserving, it follows that $$\tau(s(X_n))\leq \tau(s(\iota_X\chi_{[2^n,2^{n+1})}))=2^n.$$ Furthermore, 
	\begin{align*}
		\Big\|X-\sum_{n=-\infty}^k X_n\Big\|&=\mu\Big(0, X-\sum_{n=-\infty}^k X_n\Big)\\
		&=\mu\Big(0, \iota_X\big(\mu(X)-\sum_{n=-\infty}^k \mu(X)\chi_{[2^n, 2^{n+1})}\big)\Big)\\
		&=\mu\Big(0,\mu(X)-\sum_{n=-\infty}^k \mu(X)\chi_{[2^n, 2^{n+1})}\Big)\\
		&=\Big\|\mu(X)-\sum_{n=-\infty}^k \mu(X)\chi_{[2^n, 2^{n+1})}\Big\|,
	\end{align*}
	which implies that $\Big\{\Big\|X-\sum_{n=-\infty}^k X_n\Big\|\Big\}\in E(\Z)$. Thus, $X=\sum_{n\in\Z} X_n$ is a $E(\Z)$-dyadic decomposition of $X$. Similarly, $Y=\sum_{n\in\Z} Y_n$ is  a $E(\Z)$-dyadic decomposition of $Y$.
	
	Using again the fact that both $\iota_X$ and $\iota_Y$ are trace-preserving, it follows that 
	\begin{align*}
		\phi(X)&=\theta\Big(\{\frac1{2^k} \tau(X_n)\}\Big)=\theta\Big(\{\frac1{2^k} \tau(\iota_X(\mu(X)\chi_{[2^n,2^{n+1})})\}\Big)\\
		&=\theta\Big(\{\frac1{2^k} \tau(\iota_Y(\mu(X)\chi_{[2^n,2^{n+1})})\}\Big)=\theta\Big(\{\frac1{2^k} \tau(Y_n)\}\Big)=\phi(Y),
	\end{align*}
	which proves that $\phi$ is symmetric.

\end{proof}

Next we prove the converse of Theorem \ref{thm_Pietsch_trace}, that is when a symmetric functional on $E(\cM,\tau)$ generates a $\frac12 S_+$-invariant functional on the corresponding shift-monotone sequence space $E(\Z)$. We recall, that the operator $\cD$ is defined in \eqref{eq_def_diag}.

\begin{thm}\label{thm_Pie_shift_from_trace}Assume that $E(\Z)\leftrightarrows E(\cM,\tau)$ are associated spaces and let $\phi$ be a symmetric functional on $E(\cM,\tau)$. 
	
	Then the mapping  $\theta:E(\Z)\to \C$, defined by
	\begin{equation}\label{Pie_from_trace}
		\theta(x)=\phi(\cD x), \quad x\in E(\Z)
	\end{equation}
	is a $\frac12 S_+$-invariant functional on $E(\Z)$. 
\end{thm}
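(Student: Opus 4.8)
The plan is to verify three things in turn: that $\theta$ is well defined, that it is linear, and that it is $\frac12 S_+$-invariant; only the last point carries any content.

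First, $\theta$ is well defined because for $x\in E(\Z)$ Theorem \ref{thm_corr_nc} gives $\cD x\in E(\cM,\tau)$, so $\phi(\cD x)$ is meaningful. Linearity is immediate: $\cD=\iota D$ is linear (both $D$ and the $*$-isomorphism $\iota$ are), and $\phi$ is linear, so $\theta(\alpha x+\beta y)=\phi(\cD(\alpha x+\beta y))=\alpha\phi(\cD x)+\beta\phi(\cD y)=\alpha\theta(x)+\beta\theta(y)$.

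For the invariance I must show $\theta(\tfrac12 S_+x)=\theta(x)$ for every $x\in E(\Z)$. Since $E(\Z)$ is solid, every $x$ is a linear combination of four positive sequences lying in $E(\Z)$ (the positive and negative parts of its real and imaginary components, each dominated by $|x|$), and both $\theta$ and $\tfrac12 S_+$ are linear; hence it suffices to treat $0\le x\in E(\Z)$. Recall from the proof of Theorem \ref{thm_corr_commutative} that $D(S_+x)=\sigma_2(Dx)$, whence $\cD(S_+x)=\iota\sigma_2(Dx)$ by \eqref{eq_def_diag}. Writing $A=\cD x=\iota Dx$ and $B=\tfrac12\cD(S_+x)=\tfrac12\iota\sigma_2(Dx)$, both are positive elements of $E(\cM,\tau)$, and the goal reduces to $\phi(B)=\phi(A)$.

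The key step is to split $\sigma_2(Dx)$, inside the commutative algebra $\cM_0\cong S(0,\tau(\mathbf{1}))$, into two disjointly supported pieces, each equidistributed with $Dx$. Concretely, $\sigma_2(Dx)=\sum_n x_n\chi_{[2^{n+1},2^{n+2})}$ assigns the value $x_n$ to an interval of measure $2^{n+1}$; halving each such interval produces functions $g_1,g_2\in S(0,\infty)$ with $g_1+g_2=\sigma_2(Dx)$ and with $g_i$ taking the value $x_n$ on a set of measure $2^n$. Since $Dx$ itself takes the value $x_n$ on $[2^n,2^{n+1})$, also of measure $2^n$, the functions $g_i$ and $Dx$ have the same distribution, so $g_i^{*}=(Dx)^{*}$. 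Setting $B_i=\tfrac12\iota g_i$ we obtain $B=B_1+B_2$ with $0\le B_i\in E(\cM,\tau)$ (the membership follows from $\mu(B_i)=\tfrac12(Dx)^{*}\le\mu(A)$ and the Calkin property), and $\mu(2B_i)=g_i^{*}=(Dx)^{*}=\mu(A)$. As $\phi$ is symmetric, $\phi(2B_i)=\phi(A)$, hence $\phi(B_i)=\tfrac12\phi(A)$ by linearity, and therefore $\phi(B)=\phi(B_1)+\phi(B_2)=\phi(A)$. This yields $\theta(\tfrac12 S_+x)=\phi(B)=\phi(A)=\theta(x)$, as required.

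The main obstacle is precisely this splitting: one must realise $\tfrac12\sigma_2(Dx)$ as a sum of two positive operators, each with the same singular value function as $\tfrac12\,Dx$, so that the symmetry of $\phi$—which a priori only compares positive operators having equal $\mu$—can be invoked. Working in the commutative model makes $B=B_1+B_2$ an honest identity of single elements (no series-convergence subtlety, since $\iota$ is an algebra $*$-isomorphism). The halving of the relevant dyadic intervals is available in the atomless case, and in the atomic case as well because every interval being halved has even integer length; the finite-trace case is identical with $\Z$ replaced by $\Z_-$.
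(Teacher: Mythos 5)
Your proof is correct and follows essentially the same route as the paper: the paper likewise halves each dyadic interval of $\cD(S_+x)=\sum_{n}x_{n-1}\iota\chi_{[2^n,2^{n+1})}$ into $\iota\chi_{[2^n,\frac32 2^n)}+\iota\chi_{[\frac32 2^n,2^{n+1})}$, observes that each piece is equidistributed with $\cD x$, and concludes by symmetry and linearity of $\phi$, exactly as in your splitting $B=B_1+B_2$. Your added remarks (reduction to positive $x$ via solidity, and the even-length check in the atomic case) are harmless refinements of details the paper leaves implicit.
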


\begin{proof}
	
	Let $\phi$ be a symmetric functional  on $E(\cM,\tau)$. It is clear that the mapping $\theta:E(\Z)\to \C$ defined by \eqref{Pie_from_trace} is linear.
	
	Furthermore, for any $0\leq x\in E(\Z)$ we can write
	\begin{align*}
		\theta(\frac12 S_+ x)&=\frac12\phi(\cD S_+x)= \frac12\phi\left(\sum_{n\in \Z} x_{n-1} \iota\chi_{[2^n, 2^{n+1})}\right)\\
		&=\frac12\phi\left(\sum_{n\in \Z} x_{n-1} (\iota \chi_{[2^n, \frac32 2^{n})}+\iota\chi_{[\frac32 2^{n}, 2^{n+1})})\right).
	\end{align*}
	
	Since rearrangements of the functions $\sum_{n\in \Z} x_{n-1} \chi_{[2^n, \frac32 2^{n})}$ 
	and $\sum_{n\in \Z} x_{n-1} \chi_{[\frac32 2^{n}, 2^{n+1})}$ coincide, it follows that $\mu\Big(\sum_{n\in \Z} x_{n-1} \iota \chi_{[2^n, \frac32 2^{n})}\Big)$ and $\mu\Big(\sum_{n\in \Z} x_{n-1} \iota\chi_{[\frac32 2^{n}, 2^{n+1})}\Big)$ are equal. Therefore,  we have
	$$\theta(\frac12 S_+x)=\phi\left(\sum_{n\in \Z} x_{n-1} \iota\chi_{[2^n, \frac32 2^{n})}\right).$$
	
	Similarly, since the rearrangements of the functions $\sum_{n\in \Z} x_{n-1} \chi_{[2^n, \frac32 2^{n})}$ and $\sum_{n\in \Z} x_{n-1} \chi_{[2^{n-1}, 2^{n})}$ coincide, it follows that
	$$\theta(\frac12S_+x) = \phi\left( \sum_{n\in \Z} x_{n-1} \iota \chi_{[2^{n-1}, 2^{n})}\right)=\phi(\cD x)=\theta(x).$$
	Hence, $\theta$ is an $\frac12S_+$-invariant linear functional on $E(\Z)$.
\end{proof}

\begin{rem}\label{rem_from_trace_not_shift}
	We note that, in general, the assertion of Theorem \ref{thm_Pie_shift_from_trace} does not hold if one replaces the assumption that $\phi$ is symmetric functional by the assumption that $\phi$ is a trace on $E(\cM,\tau)$. Indeed, let $\cM=L_\infty(0,\infty)$, $E(\cM,\tau)=L_1(0,\infty)$. Consider the functional  $\phi:L_1(0,\infty)\to \C$ given by 
	$$\phi(f)=\int_0^\infty \frac{ f(t)dt}{(1+t)^2}.$$
	Since the algebra $L_\infty(0,\infty)$, it trivially follows that $\phi$ is a trace. At the same time, it is clear that $\phi$ is not a symmetric functional. 
	The functional $\theta$, defined as in \eqref{Pie_from_trace}, is then given by
	$$\theta(x)=\sum_{n\in\Z}x_n \frac{2^n}{(1+2^{n+1})(1+2^n)}.$$
	One can easily check that $\theta$ is not $\frac12S_+$-invariant functional. 
\end{rem}

We now show that the correspondence given in Theorem \ref{thm_Pietsch_trace} is, in fact, a bijective correspondence between symmetric functionals on a Calkin space $E(\cM,\tau)$ and $\frac12 S_+$-invariant functionals on the associated shift-monotone sequence space $E(\Z)$. We start with auxiliary lemmas.

%
%

Recall that  for a Calkin space $E(0,\infty)$ the center $Z_E$ is defined as a linear hull of the set
$\{ f_1 - f_2 : 0\le f_1, f_2 \in E, f_1^* = f_2^* \}.$

\begin{lem}\label{inv}
	Let $E(0,\infty)$ be a Calkin space on $(0,\infty)$ and $E(\Z)$ be the corresponding shift-monotone space on $\Z$. We have that 
	$$ f- D \Phi_{\rm av}(f) \in Z_E, \ \forall \ 0 \le f \in E(0,\infty),$$
	where $\Phi_{\rm av}(f)=\{\frac1{2^n}\int_{2^n}^{2^{n+1}}f^*(s)ds\}_{n\in\Z}.$
\end{lem}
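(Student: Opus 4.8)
The plan is to reduce to the case $f=f^*$ and then to exhibit $f-D\Phi_{\rm av}(f)$ as a single difference of two equimeasurable nonnegative elements of $E$. First I would note that $\Phi_{\rm av}(f)$ depends only on $f^*$, and that $f-f^*\in Z_E$ (both are nonnegative, lie in $E(0,\infty)$ by solidity of the Calkin space since $\mu(f^*)=f^*=\mu(f)$, and they are equimeasurable). Hence it suffices to treat $f=f^*$ and to prove $f^*-g\in Z_E$, where $g:=D\Phi_{\rm av}(f)$. Writing $I_n=[2^n,2^{n+1})$, the value of $g$ on $I_n$ is the average $a_n=\frac1{2^n}\int_{I_n}f^*$, so $g$ is exactly the conditional expectation $\expec{f^*}$ of $f^*$ onto the $\sigma$-algebra generated by $\{I_n\}_{n\in\Z}$. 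In particular $g$ is again decreasing (consecutive averages of a decreasing function decrease) and $\int_{I_n}g=\int_{I_n}f^*$ for every $n$, so each block difference $w^{(n)}:=(f^*-g)\chi_{I_n}$ is supported on $I_n$ and has integral zero.

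The key step is a block-local construction: for every $n$ I would realise the mean-zero function $w^{(n)}$ as $w^{(n)}=p_n-q_n$ with $0\le p_n,q_n$ supported on $I_n$ and \emph{equimeasurable}. Since $w^{(n)}=f^*\chi_{I_n}-a_n\chi_{I_n}$ is decreasing on $I_n$ with a single sign change, its positive and negative parts $w^{(n)}_{\pm}$ have equal integral but are carried by subintervals of unequal length, so the naive choice $p_n=w^{(n)}_+$, $q_n=w^{(n)}_-$ is not equimeasurable. Instead I would add a common nonnegative pedestal $s_n$ supported on $I_n$ and set $p_n=s_n+w^{(n)}_+$, $q_n=s_n+w^{(n)}_-$; this preserves the difference, and one chooses $s_n$ so that $p_n$ and $q_n$ acquire the same distribution function. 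Existence of such an $s_n$ rests on the nonatomicity of Lebesgue measure on $I_n$ together with $\int w^{(n)}_+=\int w^{(n)}_-$, via the standard fact that a mean-zero integrable function on a nonatomic finite measure space is a difference of two equimeasurable nonnegative functions. I would carry out the matching of level sets explicitly while keeping $0\le p_n,q_n\le C f^*(2^n)\chi_{I_n}$ for an absolute constant $C$.

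Finally I would assemble the blocks using their disjointness. Because the supports $I_n$ are pairwise disjoint, the functions $p:=\sum_n p_n$ and $q:=\sum_n q_n$ are well defined pointwise and have distribution functions $d_p=\sum_n d_{p_n}$ and $d_q=\sum_n d_{q_n}$; as $d_{p_n}=d_{q_n}$ for each $n$, these agree, so $p$ and $q$ are equimeasurable, while $p-q=\sum_n w^{(n)}=f^*-g$. The uniform bound gives $0\le p,q\le C\sum_n f^*(2^n)\chi_{I_n}=C\,D\Phi f$, and $D\Phi f\in E(0,\infty)$ by Lemma~\ref{exp_f_f}(i), so solidity yields $p,q\in E(0,\infty)$. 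Thus $f^*-g=p-q$ is a difference of two equimeasurable nonnegative elements of $E$, i.e. $f^*-g\in Z_E$, and adding $f-f^*\in Z_E$ finishes the argument. The main obstacle is the block-local equimeasurable decomposition of the second paragraph: upgrading the equality of the integrals of $w^{(n)}_+$ and $w^{(n)}_-$ to an honest equality of distributions after inserting the pedestal $s_n$, with $s_n$ controlled by $f^*(2^n)$; everything else is bookkeeping made clean by the disjointness of the dyadic blocks.
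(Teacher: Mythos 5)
Your proposal is sound and shares the paper's skeleton --- reduce to $f=f^*$ using $f-f^*\in Z_E$, then exploit that $f^*-D\Phi_{\rm av}(f)$ integrates to zero over every dyadic block --- but it executes the key step by a genuinely different, localized route. The paper passes from the vanishing block integrals to the formal identity $\int_0^\infty\left(f-D\Phi_{\rm av}(f)\right)dz=0$ and invokes Kwapie\'{n}'s theorem \cite{Kwapien} once, globally, to write the whole function as a single difference of two equimeasurable functions. You instead perform a mean-zero decomposition on each block $[2^n,2^{n+1})$ separately, with the quantitative bound $0\le p_n,q_n\le Cf^*(2^n)$, and reassemble by disjointness of supports. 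Your localization buys two things that the paper's one-line citation glosses over: first, $f-D\Phi_{\rm av}(f)$ need not be integrable on $(0,\infty)$ (for $f(t)=\min(1,1/t)$ every block $[2^n,2^{n+1})$ with $n\ge1$ contributes the same positive amount to $\int_0^\infty|f-D\Phi_{\rm av}(f)|$), so a theorem about mean-zero integrable functions on finite nonatomic measure spaces does not apply verbatim to the global function; second, membership in $Z_E$ demands that the two equimeasurable functions be nonnegative and lie in $E$, a size control which the cited decomposition does not supply, whereas your domination $0\le p,q\le C\,D\Phi f$ combined with Lemma \ref{exp_f_f}(i) and solidity of $E(0,\infty)$ settles it. The price of your route is the block lemma you yourself flag as the main obstacle and only sketch: that a bounded mean-zero function on a finite nonatomic interval is the difference of two equimeasurable nonnegative functions bounded by an absolute multiple of its sup-norm. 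This is true, and it is exactly a bounded, local form of what the paper outsources to \cite{Kwapien}; one concrete proof for your monotone block function $w^{(n)}$ is to construct a measure-preserving transformation $T$ of the block whose Birkhoff sums $\sum_{k<m}w^{(n)}\circ T^k$ are uniformly bounded (a Kakutani-type tower alternating between $\{w^{(n)}>0\}$ and $\{w^{(n)}<0\}$ keeps them in $\bigl[-2\|w^{(n)}\|_\infty,\,2\|w^{(n)}\|_\infty\bigr]$), and then to take $p_n=\sup_{m\ge0}\bigl(\sum_{k<m}w^{(n)}\circ T^k\bigr)_+$ and $q_n=p_n-w^{(n)}=p_n\circ T$, which are equimeasurable because $T$ preserves measure. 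So your argument is correct in strategy and, on the points above, more careful than the paper's own proof; to be complete it must either prove this block lemma along such lines or support it by a reference, just as the paper supports its global step by \cite{Kwapien}.
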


\begin{proof} 
	For every positive $f \in E(0,\infty)$, we have $f-f^* \in Z_E$. 
	Therefore, since $Z_E$ is a linear space it is sufficient to prove the statement for $f=f^*$. 
	
	It is easy to see that for every $k\in\Z$ we have
	$$\int_{2^k}^{2^{k+1}} \left(f(z)- D \Phi_{\rm av}(f)(z)\right) dz=0$$
	and so 
	$$\int_0^\infty \left(f(z)- D \Phi_{\rm av}(f)(z)\right) dz=0.$$
	
	It follows from \cite{Kwapien} that the function $f- D \Phi_{\rm av}(f)$ can be written as a difference of two functions, such that their rearrangements coincide. Hence, $f - D \Phi_{\rm av} f \in Z_E$.
	
	
\end{proof}

\begin{lem}\label{lem_suf_dyadic}
	Let $E(\cM,\tau)\subset S_0(\cM,\tau)$ be a Calkin space. For any symmetric functional $\phi$ on  $E(\cM,\tau)$ we have 
	$$\phi(A)=\phi(\cD\Phi_{\rm av} A), \quad A\geq 0,$$
	where $\Phi_{\rm av}(A):=\{\frac1{2^n}\int_{2^n}^{2^{n+1}}\mu(s,A)ds\}_{n\in\Z}$.
\end{lem}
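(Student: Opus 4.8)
The plan is to transfer the problem to the commutative Calkin function space $E(0,\infty)$ associated with $E(\cM,\tau)$, where Lemma \ref{inv} already tells us that $f-D\Phi_{\rm av}(f)$ lies in the center $Z_E$, and then to push this identity back up to $E(\cM,\tau)$ using the symmetry of $\phi$ together with the isomorphism $\iota$ of Theorem \ref{atomless_diagonal}. The guiding observation is that a symmetric functional annihilates the image of the center under $\iota$: by definition $Z_E$ is the linear span of differences $f_1-f_2$ with $0\le f_1,f_2\in E(0,\infty)$ and $f_1^*=f_2^*$, and for each such difference the positive operators $\iota f_1,\iota f_2\in E(\cM,\tau)$ satisfy $\mu(\iota f_1)=f_1^*=f_2^*=\mu(\iota f_2)$, whence $\phi(\iota f_1)=\phi(\iota f_2)$.

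First I would record two consequences of symmetry. Writing $f:=\mu(A)\in E(0,\infty)$, the operator $\iota f$ is positive with $\mu(\iota f)=f^*=f=\mu(A)$, so $\phi(A)=\phi(\iota f)$; this is the step where symmetry frees us from the particular diagonal algebra producing $A$. Next, by Lemma \ref{inv} applied to the positive function $f$ we have $f-D\Phi_{\rm av}(f)\in Z_E$, and since $\Phi_{\rm av}(f)_n=\frac1{2^n}\int_{2^n}^{2^{n+1}}f^*(s)\,ds=\frac1{2^n}\int_{2^n}^{2^{n+1}}\mu(s,A)\,ds=\Phi_{\rm av}(A)_n$, this reads $f-D\Phi_{\rm av}(A)\in Z_E$, and in particular $D\Phi_{\rm av}(A)\in E(0,\infty)$. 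Applying the linear isomorphism $\iota$ and the annihilation-of-center observation term by term in the finite center decomposition yields $\phi(\iota f)=\phi(\iota D\Phi_{\rm av}(A))=\phi(\cD\Phi_{\rm av}(A))$, where the last equality is just the definition $\cD=\iota D$. Chaining these identities gives $\phi(A)=\phi(\iota f)=\phi(\cD\Phi_{\rm av}(A))$, as desired.

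Along the way I would verify the routine well-definedness points: that $f\in E(0,\infty)$ (immediate from $A\in E(\cM,\tau)$ and the description of the associated function space), that $\iota$ maps $E(0,\infty)$ into $E(\cM,\tau)$ so all arguments of $\phi$ genuinely lie in its domain (the center terms $f_1,f_2$ belong to $E(0,\infty)$ by definition of $Z_E$, and $D\Phi_{\rm av}(A)\in E(0,\infty)$ as noted), and that $\cD\Phi_{\rm av}(A)\in E(\cM,\tau)$, which follows since $\mu(\cD\Phi_{\rm av}(A))=(D\Phi_{\rm av}(A))^*=D\Phi_{\rm av}(A)\in E(0,\infty)$, the function being already decreasing because the dyadic averages of the decreasing function $\mu(A)$ decrease in $n$. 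I do not expect a serious obstacle here: the analytic heart, namely expressing a mean-zero function as a difference of equimeasurable functions, was already dispatched in Lemma \ref{inv} via \cite{Kwapien}. The only genuinely delicate point is conceptual rather than computational, the twofold use of symmetry: once to replace $A$ by the diagonalised operator $\iota\mu(A)$ so that the specific diagonal algebra underlying $A$ becomes irrelevant, and once to discard each equimeasurable difference coming from the center.
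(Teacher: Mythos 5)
Your proof is correct and essentially the same as the paper's: both reduce to the commutative function space, invoke Lemma \ref{inv} to place $\mu(A)-D\Phi_{\rm av}\mu(A)$ in the center $Z_E$, and then use symmetry of $\phi$ to annihilate, term by term, the image of the center under a $\mu$-preserving isomorphism. The only cosmetic difference is the direction of alignment: you move $A$ into the fixed diagonal algebra via $\phi(A)=\phi(\iota\mu(A))$, whereas the paper moves $\cD\Phi_{\rm av}A$ into the algebra of the isomorphism $\iota_A$ adapted to $A$ (with $\iota_A\mu(A)=A$); these are mirror images of the same argument.
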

\begin{proof}
	Let $0\leq A\in E(\cM,\tau)$ be fixed and let $\iota_A$ be the isomorphism as in Theorem~\ref{atomless_diagonal}. Since $\phi$ is a symmetric functional and both $\iota$ and $\iota_A$ preserve $\mu(\cdot)$, it follows that 
	$$\phi(\cD\Phi_{\rm av} A)=\phi(\iota D\Phi_{\rm av} A)=\phi(\iota_A D\Phi_{\rm av} A).$$
	
	By the construction on $\iota_A$ we have $\phi(A-\iota_A D\Phi_{\rm av} A)=\phi(\iota_A \mu(A)-\iota_A D\Phi_{\rm av} \mu(A)).$ By Lemma \ref{inv} we have that $\mu(A)-D\Phi_{\rm av} \mu(A)\in Z_E$. 
	Since every symmetric vanishes on the center, it follows that 
	$\phi(A-\iota_A D\Phi_{\rm av} A)=\phi(\iota_A f-\iota_A g)=0,$ as required.
\end{proof}

\begin{cor}\label{cor_one-to-one}
	Let $\cM$ be an atomless (or atomic) von Neumann algebra and let $E(\cM,\tau)$ and $E(\Z)$ be associated spaces, such that $E(\cM,\tau)\subset S_0(\cM,\tau)$. Then the rule 
	\begin{align*}
		\theta&\mapsto \phi,\\
		\phi(X)&=\theta(\{\frac1{2^k}\tau(X_k)\}_{k\in\Z}), \quad X\in E(\cM,\tau),
	\end{align*}
	where $X=\sum_{k\in\Z} X_K$ is an $E(\Z)$-dyadic representation of $X\in E(\cM,\tau)$, gives a bijective correspondence between all symmetric functionals on $E(\cM,\tau)$ and $\frac12 S_+$-invariant functionals $\theta$ on $E(\Z)$. The inverse of this rule  is given by \eqref{Pie_from_trace}. 
\end{cor}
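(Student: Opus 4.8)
The statement packages the two constructions already in hand: Theorem \ref{thm_Pietsch_trace} shows that the rule $\theta\mapsto\phi$ sends a $\frac12 S_+$-invariant functional to a symmetric functional, while Theorem \ref{thm_Pie_shift_from_trace} shows that the rule $\phi\mapsto\theta$, given by $\theta(x)=\phi(\cD x)$, sends a symmetric functional to a $\frac12 S_+$-invariant one. Thus both maps already land in the correct target classes, and the plan is simply to verify that the two compositions are the identity.

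For the composition ``shift $\to$ trace $\to$ shift'', I would fix a $\frac12 S_+$-invariant functional $\theta$, let $\phi$ be the associated symmetric functional, and compute $\phi(\cD x)$ for $x\in E(\Z)$ (the value is defined since $\cD x\in E(\cM,\tau)$ by Theorem \ref{thm_corr_nc}). The idea is to feed $\cD x$ its canonical dyadic representation $\cD x=\sum_{k\in\Z}x_k\,\iota\chi_{[2^k,2^{k+1})}$. I would check this is genuinely an $E(\Z)$-dyadic representation: the $k$-th term has $\tau$-support trace exactly $m([2^k,2^{k+1}))=2^k$, while the tail norms equal $\{\sup_{j>n}|x_j|\}_{n\in\Z}$, a decreasing sequence dominated pointwise by $o(x)\in E(\Z)$, hence in $E(\Z)$ by solidity. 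Since $\iota$ is trace preserving, $\frac1{2^k}\tau\bigl(x_k\,\iota\chi_{[2^k,2^{k+1})}\bigr)=x_k$, and the definition of $\phi$, which is representation independent by Theorem \ref{thm_Pietsch_trace}, gives $\phi(\cD x)=\theta(\{x_k\}_{k\in\Z})=\theta(x)$.

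For the composition ``trace $\to$ shift $\to$ trace'', I would fix a symmetric functional $\phi$, let $\theta(x)=\phi(\cD x)$, and let $\phi'$ be the symmetric functional it induces. By linearity it suffices to prove $\phi'(A)=\phi(A)$ for $0\le A\in E(\cM,\tau)$. Here I would use the specific dyadic representation $A_n=\iota_A(\mu(A)\chi_{[2^n,2^{n+1})})$ constructed in the proof of Theorem \ref{thm_Pietsch_trace}, which is available since $E(\cM,\tau)\subset S_0(\cM,\tau)$ forces $\mu(\infty,A)=0$. Because $\iota_A$ is trace preserving, $\frac1{2^n}\tau(A_n)=\frac1{2^n}\int_{2^n}^{2^{n+1}}\mu(s,A)\,ds=(\Phi_{\rm av}A)_n$, so that $\phi'(A)=\theta(\Phi_{\rm av}A)=\phi(\cD\Phi_{\rm av}A)$. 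Lemma \ref{lem_suf_dyadic} then yields $\phi(\cD\Phi_{\rm av}A)=\phi(A)$, completing the identity.

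The genuinely nontrivial direction is the second composition, and its weight rests entirely on Lemma \ref{lem_suf_dyadic}: recognising that the trace weights $\frac1{2^n}\tau(A_n)$ of the canonical representation are exactly the averaged sequence $\Phi_{\rm av}A$, and that $A$ and $\cD\Phi_{\rm av}A$ differ by an element of the centre (Lemma \ref{inv}) on which every symmetric functional vanishes. The first composition, by contrast, is essentially bookkeeping, the only subtlety being to confirm that the natural expansion of $\cD x$ meets the support-trace and tail requirements of an $E(\Z)$-dyadic representation.
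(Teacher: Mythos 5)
Your proposal is correct and follows essentially the same route as the paper's proof: well-definedness of both maps comes from Theorems \ref{thm_Pietsch_trace} and \ref{thm_Pie_shift_from_trace}, the identity $\phi(\cD x)=\theta(x)$ is obtained from the canonical dyadic representation $\cD x=\sum_{k\in\Z}x_k\,\iota\chi_{[2^k,2^{k+1})}$ together with trace preservation of $\iota$, and the reverse composition rests on Lemma \ref{lem_suf_dyadic} combined with the observation that the trace weights of the canonical representation of $A$ are exactly $\Phi_{\rm av}A$. The only cosmetic difference is that you evaluate the induced functional directly on $A$ via the $\iota_A$-representation and invoke Lemma \ref{lem_suf_dyadic} once (for $\phi$), whereas the paper applies that lemma to both functionals and compares them on the diagonal operators $\cD\Phi_{\rm av}X$; the underlying computation is identical.
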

\begin{proof}By Theorems \ref{thm_Pietsch_trace} and \ref{thm_Pie_shift_from_trace} the rule $\theta\mapsto \phi$ and its inverse are well-defined. Therefore, we only need to show that this rule is one-to-one.

	Let $\theta$ be $\frac12 S_+$-invariant functional on $E(\Z)$ and let $\phi$ be a symmetric functional on $E(\cM,\tau)$, defined by \eqref{eq_Pie_trace_from_shift}. Note that, since $\iota$ is a trace-preserving isomorphism, it follows that $\cD x=\sum_{k\in\Z} D_k,$ where the operators $D_k=x_k\iota \chi_{[2^k, 2^{k+1})}$ form a $E(\Z)$-dyadic representation of $\cD x$. Therefore, using again the fact that $\iota$ is trace preserving, we obtain 
	\begin{align*}
		\phi(\cD x)=\theta(\{\frac1{2^k}\tau(x_k \iota \chi_{[2^k, 2^{k+1})})\}_{k\in\Z})&=\theta(\{x_k\frac1{2^k} m([2^k, 2^{k+1}))\}_{k\in\Z})\\
		&=\theta(\{x_k\}_{k\in\Z}).
	\end{align*}
	Thus, $\theta\mapsto \phi$ is one-to-one. 
	
	Let $\phi$ be a symmetric functional on $E(\cM,\tau)$ and let $\theta_\phi$ be $\frac12 S_+$-invariant functional defined by \eqref{Pie_from_trace}. Denote by $\phi_1$ the symmetric functional on $E(\cM,\tau)$ defined by $\theta_\phi$ via \eqref{eq_Pie_trace_from_shift}. We claim that $\phi=\phi_1$. By Lemma  \ref{lem_suf_dyadic} it is sufficient to show that 
	$\phi(\cD\Phi_{\rm av} X)=\phi_1(\cD\Phi_{\rm av} X)$ for any $0\leq X\in E(\cM,\tau)$. 
	By definition of $\phi_1$ and $\theta_\phi$ we have \begin{align*}
		\phi_1(\cD \Phi_{\rm av} X)&=\theta_\phi\Big(\Big\{\frac1{2^n} \tau(\frac1{2^n}\int_{2^n}^{2^{n+1}}\mu(s,X)ds \cdot \iota \chi_{[2^n,2^{n+1})}\Big\}_{n\in\Z}\Big)\\
		&=\theta_\phi(\Phi_{\rm av}(X))=\phi(\cD\Phi_{\rm av} X).
	\end{align*}
	This proves the surjectivity of the correspondence $\theta\mapsto \phi$ given by \eqref{eq_Pie_trace_from_shift}. 
\end{proof}

\subsection{Classes of symmetric functionals}

We shall consider the following classes of symmetric functionals on Calkin spaces:

\begin{defn}\label{SF}Let $E(\cM,\tau)$ be a Calkin space. A symmetric functional $\phi$ on  $E(\cM,\tau)$ is called
	
	\begin{enumerate}
		\item supported at infinity if $\phi(X E^{|X|}(a,+\infty))=0$ for every $a>0$ and every $X \in E(\cM,\tau)$;
		\item supported at zero if $\phi(X E^{|X|}(0,a))=0$ for every $a>0$ and every $X \in E(\cM,\tau)$;
		\item normalised if $\phi(X)=1$ for every $X\in E(\cM,\tau)$ such that $\mu(X) = D\bf{1}$.
	\end{enumerate}
\end{defn}

Informally, symmetric functionals supported at infinity does not depend on ``large'' values of a generalised singular values function,
whereas symmetric functionals supported at zero does not depend on ``small'' values of a generalised singular values function.

We use an unusual normalisation to avoid unnecessary constants. This normalisation is natural in Pietsch-type constructions of symmetric functionals and first appeared in~\cite{P_PDO}.

Also we consider the various classes of $\frac12 S_+$-invariant functionals on shift-invariant spaces $E(\Z)$.
\begin{defn}Let $E(\Z)$ be a shift-invariant space.  A $\frac12 S_+$-invariant functional $\theta$ on $E(\Z)$ is called
	\begin{enumerate}
		\item supported at $+\infty$ if $\theta( \chi_{(-\infty,a)})=0$ for every $a\in \Z$;
		\item supported at $-\infty$ if $\theta( \chi_{(a,+\infty)})=0$ for every $a\in \Z$;
		\item normalised if $\theta(\chi_{\Z})=1$.
	\end{enumerate}
\end{defn}

\begin{rem}\label{spec_sf}
	The bijection between the set of all symmetric functionals $\phi$ on a Calkin space $E(\cM,\tau)$ and the set of all $\frac12 S_+$-invariant linear functionals $\theta$ on the corresponding shift-invariant space $E(\Z)$ described in Corollary \ref{cor_one-to-one} can be specified as follows:
	\begin{enumerate}
		\item  $\phi$ is positive if and only if $\theta$ is positive;
		\item $\phi$ is supported at infinity if and only if $\theta$ is supported at $+\infty$;
		\item $\phi$ is supported at zero if and only if $\theta$ is supported  at $-\infty$;
		\item  $\phi$ is normalised if and only if $\theta$ is normalised.
	\end{enumerate}
	
\end{rem}

\section{Pietsch correspondence and completeness} \label{sec_norms}

In this section we show that the Pietsch correspondence, given in Theorem \ref{thm_corr_nc} extends to a correspondence of symmetrically $\Delta$-normed operator space and $\Delta$-normed shift-monotone spaces. Thus, we extend \cite[Theorem 7.9]{P_trI} to the setting of an arbitrary semifinite von Neumann algebra.  Furthermore, we prove that this  correspondence preserves completeness, extending \cite{LPSZ}. Throughout this section we assume that $\cM$ is a semifinite atomless or atomic (with atoms of equal trace) von Neumann algebra equipped with a faithful normal semifinite trace $\tau$.

\subsection{Correspondence of $\Delta$-norms}
For convenience of the reader, we recall the definition of $\Delta$-norms (see e.g. \cite{KPR}). 

Let $\Omega$ be a vector space over the field $\mathbb{C}$.
A function $\|\cdot\|$ from $\Omega$ to $[0,\infty)$ is a $\Delta$-norm, if for all $x,y \in \Omega$ the following properties hold:
\begin{enumerate}
	\item $\|x\| \geqslant 0$, $\|x\| = 0 \Leftrightarrow x=0$;
	\item $\|\alpha x\| \leqslant \|x\|$ for all $|\alpha| \le1$;
	\item $\lim _{\alpha \rightarrow 0}\|\alpha x\| = 0$;
	\item $\|x+y\| \le C_\Omega \cdot (\|x\|+\|y\|)$ for a constant $C_\Omega\geq 1$ independent of $x,y$.
\end{enumerate}
The couple $(\Omega, \|\cdot\|)$ is called a $\Delta$-normed space.
It is well-known that every $\Delta$-normed space $(\Omega,\|\cdot\|)$ is a topological vector space with a metrizable topology \cite{KPR} and conversely every metrizable space can be equipped with a $\Delta$-norm (see e.g. \cite{Koethe}, \cite{KPR}).

\begin{defn}\label{opspace}
	Let $E(\cM,\tau)$ be a vector subspace of $S({\mathcal{M}, \tau})$
	equipped with a $\Delta$-norm $\|\cdot\|_{E}$. We say that
	$ E(\cM,\tau)$ is a \textit{symmetric $\Delta$-normed operator space} (on
	$\mathcal{M}$, or in $S({\mathcal{M}, \tau})$) if $X\in
	E(\cM,\tau)$ and every $Y\in S({\mathcal{M}, \tau})$ the
	assumption $\mu(Y)\leq \mu(X)$ implies that $Y\in E(\cM,\tau)$ and
	$\|Y\|_E\leq \|X\|_E$.
	In the case, when $(E(\cM,\tau), \|\cdot\|_E)$ is complete, we say that $(E(\cM,\tau), \|\cdot\|_E)$ is a complete symmetric $\Delta$-normed operator space. 
\end{defn}

Note that, as in the case of quasi-normed symmetric spaces  (see e.g. \cite{KS08b, SC} and references
therein), every symmetric $\Delta$-normed  operator space $E(\cM,\tau)$ is (an
absolutely solid) $\mathcal{M}$-bimodule of $S\left(\mathcal{M},
\tau \right)$, that is if $X\in E(\cM,\tau)$, $A,B\in \cM$, then $AXB\in E(\cM,\tau)$ and $$\|AXB\|_E\leq  \|A\|_\infty\|B\|_\infty \|X\|_E,$$ whenever $\|A\|_\infty,\|B\|_\infty\leq 1.$

\begin{rem}It is well-known that the algebra $\smt$ equipped with the measure topology is a complete metrizable topological algebra \cite{Ne}. In particular, $\smt$ can be equipped with a $\Delta$-norm, which generates the measure topology.  In the present paper whenever we consider a symmetric $\Delta$-normed operator space $E(\cM,\tau)$, we assume that it is a proper subspace of $\smt$, since completeness of $\smt$ is essential in the proof of Theorem \ref{thm_complete} below.
\end{rem}

We recall the following result \cite[Lemma 2.4]{HLS}.
\begin{lem}\label{embedding}
	If $E(\cM,\tau)\subseteq S\left(\cM, \tau \right) $ is a symmetric $\Delta$-normed space, then the embedding of $(E(\cM,\tau),\|\cdot\|_E)$ in $(S\left(\cM, \tau \right),t_\tau)$ is
	continuous, that is, if $\left\{ x_{n}\right\}
	_{n=1}^{\infty }$ is a sequence in $E(\cM,\tau)$ satisfying $\left\Vert
	x_{n}\right\Vert _{E}\rightarrow 0$, then $x_{n}\overset{t_\tau}{%
		\rightarrow }0$.
\end{lem}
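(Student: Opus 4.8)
The plan is to work directly with the neighbourhood base $\{V(\varepsilon,\delta)\}$ of the measure topology and to argue by contradiction. Suppose $\|x_n\|_E\to 0$ while $x_n\not\to 0$ in $t_\tau$; then there are $\varepsilon,\delta>0$ and a subsequence, still denoted $\{x_n\}$, with $x_n\notin V(\varepsilon,\delta)$ for every $n$. The first step is to translate membership in $V(\varepsilon,\delta)$ into a statement about $\mu$: directly from \eqref{def_mu} and the definition of $V(\varepsilon,\delta)$, the strict inequality $\mu(\delta,x)<\varepsilon$ produces a projection $P$ with $\tau(\mathbf{1}-P)\le\delta$ and $\|xP\|_\infty<\varepsilon$, whence $x\in V(\varepsilon,\delta)$. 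Taking contrapositives, $x_n\notin V(\varepsilon,\delta)$ forces $\mu(\delta,x_n)\ge\varepsilon$, and since $\mu(\cdot,x_n)$ is non-increasing this gives $\mu(t,x_n)\ge\varepsilon$ for all $0\le t\le\delta$ and all $n$.

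The second step is to manufacture a single fixed operator dominated by every $x_n$. Using that $\tau$ is semifinite, choose a nonzero projection $q\in\cM$ with $\tau(q)\le\delta$ and set $y:=\varepsilon q$, so that $\mu(y)=\varepsilon\chi_{[0,\tau(q))}$. Since $\mu(t,x_n)\ge\varepsilon$ for $t<\tau(q)\le\delta$ while $\mu(t,y)=0$ for $t\ge\tau(q)$, we obtain $\mu(y)\le\mu(x_n)$ for every $n$. Because $E(\cM,\tau)$ is a symmetric $\Delta$-normed operator space (Definition \ref{opspace}) and each $x_n\in E(\cM,\tau)$, this domination yields $y\in E(\cM,\tau)$ together with $\|y\|_E\le\|x_n\|_E$ for all $n$. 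Letting $n\to\infty$ forces $\|y\|_E=0$, whence $y=0$ by property (i) of a $\Delta$-norm, contradicting $y=\varepsilon q\ne 0$. This contradiction proves the claim; equivalently it exhibits, for each $\varepsilon,\delta$, an $E$-ball $\{\,x:\|x\|_E<\|y\|_E\,\}$ contained in $V(\varepsilon,\delta)$, which is precisely the continuity of the embedding.

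The one point that needs care — and which I expect to be the main obstacle — is the existence of the auxiliary projection $q$ with $0<\tau(q)\le\delta$. Such a $q$ always exists when $\cM$ has a nontrivial atomless part, or an atom of trace at most $\delta$, by semifiniteness of $\tau$. The remaining case is when $\cM$ is atomic and every atom has trace strictly larger than $\delta$; then the only projection $P$ with $\tau(\mathbf{1}-P)\le\delta$ is $P=\mathbf{1}$, so that $V(\varepsilon,\delta)=\{\,x\in\smt:\|x\|_\infty\le\varepsilon\,\}$ and $x_n\notin V(\varepsilon,\delta)$ reads $\|x_n\|_\infty>\varepsilon$. In this situation one replaces $q$ by a minimal atom $p$: then $\mu(t,x_n)=\|x_n\|_\infty>\varepsilon$ for $t\in[0,\tau(p))$, so $\mu(\varepsilon p)\le\mu(x_n)$ and the identical contradiction goes through. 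Once the projection is produced, the remainder is the purely formal domination step above, so the only genuine work is this structural case distinction in $\cM$.
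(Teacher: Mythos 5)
The paper does not prove this lemma at all: it is imported verbatim from \cite[Lemma 2.4]{HLS}, so your argument must be judged on its own merits rather than against an internal proof. Its core mechanism is correct: the translation of $x\notin V(\varepsilon,\delta)$ into $\mu(\delta,x)\ge\varepsilon$ via \eqref{def_mu} is right, and the scheme ``produce one fixed nonzero $y$ with $\mu(y)\le\mu(x_n)$ for every $n$ in the bad subsequence, deduce $\|y\|_E\le\|x_n\|_E\to0$, hence $y=0$'' is a legitimate route to a contradiction, using only the symmetry of the $\Delta$-norm and property (i). Moreover, under the standing assumptions of Section \ref{sec_norms}, where the lemma sits ($\cM$ atomless, or atomic with all atoms of equal trace), your case distinction really is exhaustive: in the atomless case projections of trace at most $\delta$ exist by semifiniteness, and in the equal-trace atomic case every atom has minimal trace, so your second case applies verbatim. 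To that extent the proof is complete.

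The gap appears when the lemma is read in the generality in which it is stated (and in which \cite{HLS} proves it), namely for arbitrary semifinite $\cM$: your last case silently assumes that an atom of minimal trace exists, and it need not. Concretely, take $\cM=\ell_\infty(\N)$ with $\tau(\{a_k\}_{k\ge1})=\sum_k(1+\tfrac1k)a_k$ and $\delta=1$: every atom $e_k$ has trace $1+\tfrac1k>\delta$, and there is no atom of least trace. For any chosen atom $p=e_m$, the identity $\mu(t,x_n)=\|x_n\|_\infty$ on $[0,\tau(p))$ then fails in general, because any atom $e_k$ with $k>m$ is an admissible projection in \eqref{def_mu} of trace smaller than $\tau(p)$; for instance $x_n=2\varepsilon e_k$ satisfies $\|x_n\|_\infty>\varepsilon$ but $\mu(t,x_n)=0$ for $t\ge1+\tfrac1k$, so $\mu(\varepsilon p)\not\le\mu(x_n)$. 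The failure is not cosmetic: in this algebra the only uniform consequence of $x_n\notin V(\varepsilon,\delta)$ is $\mu(x_n)\ge\varepsilon\chi_{[0,\delta]}$, and no nonzero $y\in\smt$ satisfies $\mu(y)\le\varepsilon\chi_{[0,\delta]}$ (indeed $\mu(y)\ge|y_k|\chi_{[0,1+1/k)}$ whenever the $k$-th coordinate of $y$ is nonzero), so no single dominating element can exist and the method itself breaks, not merely your choice of $p$. Closing this case needs an ingredient your proof never uses, the quasi-triangle inequality: choose atoms $q_n\le E^{|x_n|}(\varepsilon,\infty)$, so that $\|\varepsilon q_n\|_E\le\|x_n\|_E\to0$; after passing to a subsequence the traces $\tau(q_n)$ are constant or nondecreasing (then $\mu(\varepsilon q_1)\le\mu(\varepsilon q_n)$ gives a contradiction as in your argument), or strictly decreasing to some $L\ge\delta$, in which case the $q_n$ are orthogonal (atoms of distinct trace have orthogonal central supports), and picking $N$ with $\tau(q_N)<2L$ yields $\mu(\varepsilon q_N)\le\mu(\varepsilon(q_n+q_m))$, whence $\|\varepsilon q_N\|_E\le C_E\bigl(\|\varepsilon q_n\|_E+\|\varepsilon q_m\|_E\bigr)\to0$, again a contradiction. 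So: your proof is sound under this paper's standing hypotheses, but genuinely incomplete for the general semifinite statement being quoted.
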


Note that if $E(0,\infty)$ is a symmetric $\Delta$-normed function space, then the dilation operator $\sigma_s$ is a bounded operator on $E(0,\infty)$ for any $s>0$, and
\begin{equation}\label{norm_sigma}
	\|\sigma_{2^k}f\|_E\leq (2C_E)^k\|f\|_E,\quad k\in\N.
\end{equation}
The proof of this claim is similar to \cite[Section II.4]{KPS} and is therefore omitted.  


\begin{defn}
	Let $(E(\cM,\tau),\|\cdot\|_E)$ be a symmetrically $\Delta$-normed space on $\cM$. Following A.Pietsch \cite{P_trI} we say that the $\Delta$-norm $\|\cdot\|_E$ is \emph{stable} if $\mu(2^n,X)\leq \mu(2^n,Y),\, n\in\Z,\, X,Y\in E(\cM,\tau)$ implies that $\|X\|_E\leq \|Y\|_E$.
\end{defn}

As the following lemma shows, one can always assume that a symmetric $\Delta$-normed space $(E(\cM,\tau),\|\cdot\|_{E(\cM,\tau)})$ is equipped with a stable $\Delta$-norm $\|\cdot\|_{E(\cM,\tau)}$ (cf. \cite[Proposition 7.5]{P_trI}).
\begin{lem}\label{stable}
	For any symmetrically $\Delta$-normed space $(E(\cM,\tau), \|\cdot\|_{E(\cM,\tau)})$ there exists an equivalent stable (symmetric) $\Delta$-norm.
\end{lem}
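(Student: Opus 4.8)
The plan is to define the stable $\Delta$-norm by
\begin{equation*}
\|X\|^{st}:=\|\cD\Phi X\|_E,\qquad X\in E(\cM,\tau),
\end{equation*}
and to reduce every verification to the commutative level. Recall that $\cD\Phi X=\iota(D\Phi X)$, where $D\Phi X=\sum_{n\in\Z}\mu(2^n,X)\chi_{[2^n,2^{n+1})}$ is a \emph{decreasing} step function, so that $\mu(\cD\Phi X)=D\Phi X$. Since $\|X\|^{st}$ depends on $X$ only through the sequence $\Phi X=\{\mu(2^n,X)\}_{n\in\Z}$ and does so monotonically, stability will be essentially immediate: if $\mu(2^n,X)\le\mu(2^n,Y)$ for all $n$, then $D\Phi X\le D\Phi Y$, whence $\mu(\cD\Phi X)\le\mu(\cD\Phi Y)$ and the symmetry of $\|\cdot\|_E$ gives $\|X\|^{st}\le\|Y\|^{st}$.

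To handle the remaining points I would transport everything to the associated Calkin function space. Equip $E(0,\infty)$ with the $\Delta$-norm $\|g\|_{E(0,\infty)}:=\|\iota(g^*)\|_E$; using that $\mu(\iota(g^*))=g^*$ (Theorem \ref{atomless_diagonal}) and that $\|\cdot\|_E$ is symmetric, one checks that this is a symmetric $\Delta$-norm on $E(0,\infty)$, so that the dilation estimate \eqref{norm_sigma} becomes available there, and that
\begin{equation*}
\|X\|_E=\|\mu(X)\|_{E(0,\infty)},\qquad \|X\|^{st}=\|D\Phi\,\mu(X)\|_{E(0,\infty)},
\end{equation*}
the latter because $D\Phi X$ is already decreasing. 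The equivalence of the two norms then follows from the sandwich $\mu(X)\le D\Phi\,\mu(X)\le\sigma_2\mu(X)$, which is precisely \eqref{f_via_expec}--\eqref{expec_via_f} in the proof of Lemma \ref{exp_f_f} applied to the decreasing function $f=\mu(X)$: monotonicity of $\|\cdot\|_{E(0,\infty)}$ gives $\|X\|_E\le\|X\|^{st}$, while $\|X\|^{st}\le\|\sigma_2\mu(X)\|_{E(0,\infty)}\le 2C_E\|\mu(X)\|_{E(0,\infty)}=2C_E\|X\|_E$ by \eqref{norm_sigma}.

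The axioms (i)--(iii) of a $\Delta$-norm for $\|\cdot\|^{st}$ are inherited from $\|\cdot\|_E$ together with the identity $\Phi(\alpha X)=|\alpha|\Phi X$; the only nontrivial point is the quasi-triangle inequality. For this I would use the subadditivity estimate \eqref{sum_rear}, which yields $\mu(2^n,X+Y)\le\mu(2^{n-1},X)+\mu(2^{n-1},Y)$ and hence, comparing step functions on each dyadic block,
\begin{equation*}
D\Phi(X+Y)\le\sigma_2\big(D\Phi X+D\Phi Y\big).
\end{equation*}
Since the right-hand side is decreasing, the symmetry of $\|\cdot\|_{E(0,\infty)}$, the dilation bound \eqref{norm_sigma}, and one application of the quasi-triangle inequality of $\|\cdot\|_E$ give
\begin{equation*}
\|X+Y\|^{st}\le 2C_E\,\|D\Phi X+D\Phi Y\|_{E(0,\infty)}\le 2C_E^2\big(\|X\|^{st}+\|Y\|^{st}\big).
\end{equation*}

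The main obstacle is the bookkeeping at the interface between the operator space and the function space: one must confirm that the transported norm $\|\cdot\|_{E(0,\infty)}$ is genuinely a symmetric $\Delta$-norm, so that \eqref{norm_sigma} legitimately applies, and that the identity $\|X\|_E=\|\mu(X)\|_{E(0,\infty)}$ holds. The latter rests on the fact, guaranteed by Theorem \ref{atomless_diagonal}, that every $X\in E(\cM,\tau)$ has the same singular value function as the diagonal operator $\iota(\mu(X))\in\cM_0$, combined with the two-sided monotonicity of a symmetric $\Delta$-norm under $\mu(Y)\le\mu(X)$. Once this identification is in place the whole argument becomes commutative and collapses onto the dilation bound already recorded in \eqref{norm_sigma}.
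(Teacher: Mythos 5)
Your proposal is correct and takes essentially the same route as the paper: you define the identical equivalent norm $\|X\|^{st}=\|\cD\Phi X\|_{E(\cM,\tau)}=\|D\Phi X\|_{E(0,\infty)}$ and obtain equivalence from the sandwich $\mu(X)\le D\Phi\,\mu(X)\le\sigma_2\mu(X)$ combined with the dilation bound \eqref{norm_sigma}. The only difference is that you spell out the stability check and the verification of the $\Delta$-norm axioms (via \eqref{sum_rear}), which the paper's proof leaves implicit with ``the fact that $\|\cdot\|^\sim_{E(\cM,\tau)}$ is a $\Delta$-norm now follows.''
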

\begin{proof}
	We define 
	$$\|X\|^\sim_{E(\cM,\tau)}:=\|\cD\{\mu(2^n,X)\}\|_{E(\cM,\tau)}=\|D\{\mu(2^n,X)\}\|_{E(0,\infty)},\ X\in E(\cM,\tau).$$
	
	Since $\mu(X)\leq D\{\mu(2^n,X)\}\leq sigma_2\mu(X)$ for any $X\in E(\cM,\tau)$, it follows  that 
	$$\|X\|_{E(\cM,\tau)}=\|\mu(X)\|_{E(0,\infty)}\leq \|D\{\mu(2^n,X)\}\|_{E(0,\infty)}\stackrel{\eqref{norm_sigma}}{\leq}2C_E\|\mu(X)\|_{E(0,\infty)}.$$
	Thus, $\|X\|_{E(cM,\tau)}\leq \|X\|^\sim_{E(\cM,\tau)}\leq 2C_E\|X\|_{E(\cM,\tau)}.$ The fact that $\|\cdot\|^\sim_{E(\cM,\tau)}$ is a $\Delta$-norm on $E(\cM,\tau)$ now follows. 
\end{proof}

Next, we introduce the notion of a $\Delta$-normed shift-monotone sequence space. 

\begin{defn}\label{def_si_normed}
	A shift-invariant space $E(\Z)\subset S(\Z)$ equipped with a $\Delta$-norm $\|\cdot\|_E$ is called a \emph{$\Delta$-normed shift-monotone space} if
	\begin{itemize}
		\item[(i).] $x\in S(Z), y\in E(\Z)$ and $o(x)\leq o(y)$ implies that $x\in E(\Z)$ and $\|x\|_E\leq \|y\|_E$.
		\item[(ii).] The shift-operator $S_+$ is bounded on $E(\Z)$.
	\end{itemize}
	If, in addition, $(E(\Z),\|\cdot\|_{E(\Z)})$ is a complete space, then $E(\Z)$ is called \emph{complete $\Delta$-normed shift-monotone space} on $\Z$. Similarly, one can define $\Delta$-normed shift-monotone spaces on $\Z_-$ and $\Z_+$.
\end{defn}

Following convention of Remark \ref{rem_diff_types} we write $E(\cM,\tau)\leftrightarrows E(\Z)$ for the associated spaces  even in the case when $\cM$ is atomless with $\tau(\mathbf{1})=1$ (so that the corresponding sequence space is indexed by $\Z_-$) or when $\cM$ is atomic (so that the corresponding sequence space is indexed by $\Z_+$). The following theorem extends \cite[Theorem 7.9]{P_trI} to the setting of semifinte von Neumann algebras. 

\begin{thm}
	\label{thm_corr_nc_norms}Let $\cM$ be an atomless or atomic von Neumann algebra equipped with a faithful normal semifinite trace $\tau$ and let $E(\cM,\tau)$ and $E(\Z)$ be associated spaces (via Theorem \ref{thm_corr_nc}). The rule
	\begin{align}\label{eq_Pie_norm_corr}
		\|\cdot\|_{E(\cM,\tau)}&\mapsto \|\cdot\|_{E(\Z)}: &\|x\|_{E(\Z)}=\|\cD x\|_{E(\cM,\tau)}, \quad x\in E(\Z),\\
		\|\cdot\|_{E(\Z)}&\mapsto \|\cdot\|_{E(\cM,\tau)}: &\|X\|_{E(\cM,\tau)}=\|\Phi X\|_{E(\Z)}, \quad X\in E(\cM,\tau),\nonumber
	\end{align}
	extends Pietsch correspondence $E(\cM,\tau)\leftrightarrows E(\Z)$ of Theorem \ref{thm_corr_nc} up to a bijective correspondence of symmetric $\Delta$-normed operator spaces $(E(\cM,\tau),\|\cdot\|_{E(\cM,\tau)})$ with stable $\Delta$-norm $\|\cdot\|_{E(\cM,\tau)}$  and $\Delta$-normed shift-invariant sequence spaces $(E(\Z),\|\cdot\|_{E(\Z)})$.

\end{thm}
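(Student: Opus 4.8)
The plan is to verify separately that each of the two assignments in \eqref{eq_Pie_norm_corr} produces an object of the required type, and then to check that the two assignments are mutually inverse. Throughout I would exploit the two identities that tie the maps $\cD$ and $\Phi$ to the singular value function: since $\iota$ preserves $\mu$ we have $\mu(\cD x)=(Dx)^*$, and Lemma~\ref{order_vs_rear} then gives $\mu(2^n,\cD x)=(Dx)^*(2^n)=o_n(x)$; moreover $\Phi X=\{\mu(2^n,X)\}_{n\in\Z}$ is always a positive decreasing sequence, so $o(\Phi X)=\Phi X$.

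First I would show that $\|x\|_{E(\Z)}:=\|\cD x\|_{E(\cM,\tau)}$ is a $\Delta$-norm making $E(\Z)$ a $\Delta$-normed shift-monotone space in the sense of Definition~\ref{def_si_normed}. Because $\cD$ is linear and injective ($\cD x=\iota Dx=0$ forces $Dx=0$, hence $x=0$), the four $\Delta$-norm axioms transfer verbatim from $\|\cdot\|_{E(\cM,\tau)}$, with the same $\Delta$-constant. For the solidity axiom~(i), if $o(x)\le o(y)$ then $\mu(2^n,\cD x)=o_n(x)\le o_n(y)=\mu(2^n,\cD y)$ for all $n$, and \emph{stability} of $\|\cdot\|_{E(\cM,\tau)}$ immediately yields $\|x\|_{E(\Z)}\le\|y\|_{E(\Z)}$. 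For boundedness of $S_+$ (axiom~(ii)), the identity $D(S_+x)=\sigma_2(Dx)$ from the proof of Theorem~\ref{thm_corr_commutative} gives $\mu(\cD S_+x)=\sigma_2\mu(\cD x)$, so by symmetry of $E(\cM,\tau)$ and the dilation bound \eqref{norm_sigma} one gets $\|S_+x\|_{E(\Z)}\le 2C_E\|x\|_{E(\Z)}$.

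Next I would show that $\|X\|_{E(\cM,\tau)}:=\|\Phi X\|_{E(\Z)}$ is a stable symmetric $\Delta$-norm. Definiteness holds since $\mu(2^n,X)=0$ for all $n$ forces $\mu(t,X)=0$ for every $t>0$ (as $2^n\downarrow 0$), hence $X=0$; axioms~(ii) and (iii) follow from $\Phi(\alpha X)=|\alpha|\,\Phi X$. Symmetry and stability are then automatic: if $\mu(2^n,X)\le\mu(2^n,Y)$ for all $n$ then $o(\Phi X)=\Phi X\le\Phi Y=o(\Phi Y)$, and axiom~(i) of Definition~\ref{def_si_normed} gives $\Phi X\in E(\Z)$ together with $\|\Phi X\|_{E(\Z)}\le\|\Phi Y\|_{E(\Z)}$. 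The one genuinely nontrivial point, and the main obstacle, is the triangle inequality, since $\Phi$ is \emph{not} additive. Here I would invoke \eqref{sum_rear} in the form $\mu(2^n,X+Y)\le\mu(2^{n-1},X)+\mu(2^{n-1},Y)$, i.e. $\Phi(X+Y)\le S_+(\Phi X+\Phi Y)$ pointwise; as all these sequences are positive and decreasing, axiom~(i) gives $\|\Phi(X+Y)\|_{E(\Z)}\le\|S_+(\Phi X+\Phi Y)\|_{E(\Z)}$, after which boundedness of $S_+$ and the $\Delta$-inequality on $E(\Z)$ produce a finite constant $C_{E(\cM,\tau)}=\|S_+\|\,C_E$.

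Finally I would confirm that the two assignments are inverse. Composing in one order, $\mu(2^m,\cD\Phi X)=D\{\mu(2^n,X)\}(2^m)=\mu(2^m,X)$ for all $m$ (because $D$ of a decreasing sequence is already decreasing), so stability of the operator-side norm forces $\|\cD\Phi X\|_{E(\cM,\tau)}=\|X\|_{E(\cM,\tau)}$, recovering the original norm. Composing in the other order, $\Phi\cD x=\{o_n(x)\}=o(x)$ by Lemma~\ref{order_vs_rear}, and since $o(o(x))=o(x)$, applying axiom~(i) in both directions gives $\|o(x)\|_{E(\Z)}=\|x\|_{E(\Z)}$; hence the reconstructed sequence-side norm equals the original. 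I expect the hard part to be solely the triangle inequality on the operator side, where the non-additivity of $\Phi$ must be absorbed by the submajorisation estimate \eqref{sum_rear} and a single application of the shift $S_+$; the remaining work is a transfer of axioms through the linear map $\cD$ and the order-monotone map $\Phi$, with \emph{stability} of the operator-side norm playing the decisive role both in establishing axiom~(i) on the sequence side and in the round-trip identity.
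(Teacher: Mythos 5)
Your proposal is correct and follows essentially the same route as the paper's own proof: the $\Delta$-norm axioms are transferred through the linear map $\cD$, the solidity axiom comes from stability combined with Lemma \ref{order_vs_rear}, boundedness of $S_+$ from the identity $D(S_+x)=\sigma_2(Dx)$ and the dilation bound \eqref{norm_sigma}, the operator-side triangle inequality from the shifted subadditivity $\Phi(X+Y)\le S_+(\Phi X+\Phi Y)$, and the two round-trip identities from stability and from $\Phi\cD x=o(x)$. The only remark is that your definiteness argument for $\|X\|_{E(\cM,\tau)}=\|\Phi X\|_{E(\Z)}$ (via $2^n\downarrow 0$) is valid in the atomless cases, where the index set is $\Z$ or $\Z_-$, and in the atomic case (index set $\Z_+$) it requires the indexing convention for singular values, a point the paper itself leaves implicit.
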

\begin{proof}
	Suppose firstly that $(E(\cM,\tau),\|\cdot\|_{E(\cM,\tau)})$ is symmetric $\Delta$-normed operator space with stable $\Delta$-norm. We claim that $(E(\Z),\|\cdot\|_{E(\Z)})$ with $\|\cdot\|_{E(\Z)}$ defined by 
	$$\|x\|_{E(\Z)}=\|\cD x\|_{E(\cM,\tau)},\qquad x\in E(\Z),$$
	is $\Delta$-normed shift-invariant space. It is clear that $\|\cdot\|_{E(\Z)}$ is a $\Delta$-norm (in particular, the property (iv) is readily verified, since $\cD$ is linear).

	Assume that $x\in E(\Z)$, $y\in S(\Z)$ are such that $o(y)\leq o(x).$
	By Lemma~\ref{order_vs_rear} we have 
	$$\mu(2^n, \cD y)=\mu(2^n, Dy)=o_n(y)\leq o_n(x)=\mu(2^n, \cD x),\quad n\in\Z.$$
	Since the $\Delta$-norm $\|\cdot\|_{E(\cM,\tau)}$ is stable, it follows that  
	\begin{align*}
		\|y\|_{E(\Z)}&=\|\cD y\|_{E(\cM,\tau)}\leq \|\cD x\|_{E(\cM,\tau)}=\|x\|_{E(\Z)}.
	\end{align*}

	Furthermore, for any $x\in E(\Z)$, we have 
	$$D(S_+x)=\sum_{n\in\Z}x_{n-1}\chi_{[2^n,2^{n+1})}=\sigma_2 \Big(\sum_{n\in\Z}x_{n-1}\chi_{[2^{n-1},2^{n})}\Big)=\sigma_2(Dx).$$
	Therefore, 
	\begin{align*}
		\|S_+x\|_{ E(\Z)}&=\|\cD(S_+x)\|_{E(\cM,\tau)}=\|DS_+x\|_{E(0,\infty)}\\
		&=\|\sigma_2(Dx)\|_{E(0,\infty)}\stackrel{\eqref{norm_sigma}}{\leq} 2C_{E(\cM,\tau)}\|Dx\|_{E(0,\infty)}=2C_E\|x\|_{E(\Z)},
	\end{align*}
	that is $S_+$ is a bounded operator on $ E(\Z)$.
	Thus, $ E(\Z)$ is a $\Delta$-normed shift-monotone sequence space.

	Conversely, suppose that $(E(\Z),\|\cdot\|_{E(\Z)})$ is $\Delta$-normed shift-invariant space. It is clear that $\|\cdot\|_{E(\cM,\tau)}$ satisfies the first three axioms of a $\Delta$-norm. Suppose that $X,Y\in E(\cM,\tau)$. By \eqref{eq_Phi_shift} we have 
	\begin{align}\label{eq_mod_conc}
		\begin{split}
			\|X+Y\|_{E(\cM,\tau)}&=\|\Phi(X+Y)\|_{E(\Z)}\leq \|S_+(\Phi X+\Phi Y)\|_{E(\Z)}\\
			&\leq C_{E(\Z)}\|S_+\|(\|\Phi X\|_{E(\Z)}+\|\Phi Y\|_{E(\Z)})\\
			&=C_{E(\Z)}\|S_+\|(\|X\|_{E(\cM,\tau)}+\| Y\|_{E(\cM,\tau)}).
		\end{split}
	\end{align}
	Thus, $\|\cdot\|_{E(\cM,\tau)}$ is a $\Delta$-norm on the associated Calkin space $E(\cM,\tau)$. 
	
	Now, let $X\in S(\cM,\tau)$, $Y\in E(\cM,\tau)$ be such that $\mu(X)\leq \mu(Y)$.
	It follows from definition of $\Phi$, that $\Phi X\leq \Phi Y$, and so  
	$$\|X\|_{E(\cM,\tau)}=\|\Phi X\|_{E(\Z)}\leq\|\Phi Y\|_{E(\Z)}= \|Y\|_{E(\cM,\tau)},$$ which suffices to show that $(E(\cM,\tau),\|\cdot\|_{E(\cM,\tau)})$ is a symmetric $\Delta$-normed operator space. The stability of the $\Delta$-norm $\|\cdot\|_{E(\cM,\tau)}$ follows directly from the definition of $\Phi$.

	Next, we prove that the rule \eqref{eq_Pie_norm_corr} is bijective. 
	For any $X\in E(\cM,\tau)$, we have that $\mu(2^n,X)=\mu(2^n, \cD\Phi X)$, and therefore, stability of the $\Delta$-norm $\|\cdot\|_{E(\cM,\tau)}$ implies that 
	\begin{align*}
		\|X\|_{E(\cM,\tau)}=\|\cD\Phi X\|_{E(\cM,\tau)}.
	\end{align*}
	
	Furthermore, for any $x\in E(\Z)$ by Lemmas \ref{order_vs_rear} and \ref{exp_f_f} we have
	\begin{align*}
		\|x\|_{E(\Z)}&=\|o(x)\|_{E(\Z)}=\|\cD o(x)\|_{E(\cM,\tau)}=\|\Phi \cD o(x)\|_{E(\Z)}\\
		&=\|o(x)\|_{E(\Z)}=\|x\|_{E(\Z)},
	\end{align*}
	which show that the rule \eqref{eq_Pie_norm_corr} is a bijective correspondence.

\end{proof}

\subsection{Completeness of associated spaces}

We now prove that the Pietsch correspondence in Theorem \ref{thm_corr_nc_norms} preserves completeness. We recall the following auxiliary lemma. The proof of this lemma can be found in e.g. \cite[Criterion 4.2]{LPSZ}, \cite[Lemma 3.7]{HLS}.

\begin{lem}
	\label{crit}Let $(Z,\|\cdot\|)$ be a complete $\Delta$-normed space with constant $C_Z$. Then  the convergence of the series $\sum_{k=1}^\infty C_Z^k\|x_k\|, x_k\in Z,$ implies that the series $\sum_{k=1}^\infty x_k$ converges in $Z$.
	In this case,
	$\|\sum_{k=1}^\infty x_k\|=\sum_{k=1}^\infty C_Z^k\|x_k\|.$
\end{lem}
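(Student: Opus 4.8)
The plan is to reduce everything to a single ``left-nesting'' estimate for the quasi-triangle inequality $\|a+b\|\le C_Z(\|a\|+\|b\|)$ and then to use it twice: once to produce a Cauchy sequence of partial sums, and once to bound the norm of the limit. Since $\|\cdot\|$ is only a $\Delta$-norm, genuine subadditivity is unavailable and one must track the accumulating powers of $C_Z$; the point is that nesting the inequality \emph{from the left} makes these powers grow with the summation index, which is precisely the shape of the weights $C_Z^k$ appearing in the conclusion. (I note in passing that the displayed equality in the statement must be read as the inequality $\le$, since equality already fails when some $x_k$ vanishes.)

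First I would establish, for all $m<n$, the estimate
\begin{equation*}
\Big\|\sum_{k=m+1}^{n}x_k\Big\|\le\sum_{j=1}^{n-m}C_Z^{\,j}\,\|x_{m+j}\|,
\end{equation*}
by induction on $n-m$. Writing $\sum_{k=m+1}^{n}x_k=x_{m+1}+\sum_{k=m+2}^{n}x_k$ and applying property (iv) of the $\Delta$-norm puts a factor $C_Z$ on each of the two summands; the inductive hypothesis applied to the second summand then reproduces the geometric weights.

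Next I would deduce that the partial sums $s_n:=\sum_{k=1}^{n}x_k$ are Cauchy. Since $C_Z\ge1$ by property (iv), we have $C_Z^{\,j}\le C_Z^{\,m+j}$, so the estimate above gives
\begin{equation*}
\|s_n-s_m\|\le\sum_{j=1}^{n-m}C_Z^{\,j}\|x_{m+j}\|\le\sum_{k=m+1}^{\infty}C_Z^{\,k}\|x_k\|,
\end{equation*}
and the right-hand side is a tail of the convergent series $\sum_k C_Z^k\|x_k\|$, hence tends to $0$ as $m\to\infty$. Completeness of $Z$ then provides $s\in Z$ with $s_n\to s$, that is $\sum_{k=1}^{\infty}x_k=s$.

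Finally, for the norm bound I would estimate $\|s\|$ directly rather than appeal to continuity of the $\Delta$-norm (which holds only up to a factor $C_Z$ and would spoil the clean constant). Writing $s^{(N+1)}:=\sum_{k=N+1}^{\infty}x_k$ and applying property (iv) $N$ times yields
\begin{equation*}
\|s\|\le\sum_{k=1}^{N}C_Z^{\,k}\|x_k\|+C_Z^{\,N}\big\|s^{(N+1)}\big\|.
\end{equation*}
The tail term is controlled by passing to the limit in the finite estimate: each partial sum of $s^{(N+1)}$ has norm at most $\sum_{j\ge1}C_Z^{\,j}\|x_{N+j}\|$, so one further use of property (iv) gives $C_Z^{\,N}\|s^{(N+1)}\|\le C_Z\sum_{k=N+1}^{\infty}C_Z^{\,k}\|x_k\|$, which vanishes as $N\to\infty$. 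Letting $N\to\infty$ then produces $\|s\|\le\sum_{k=1}^{\infty}C_Z^{\,k}\|x_k\|$, as required. The main obstacle, and the only genuine subtlety, is this bookkeeping of $C_Z$: the very feature that forces the index-dependent weights in the conclusion is also what renders the tail sums summable, so the convergence and the norm-bound halves of the argument dovetail.
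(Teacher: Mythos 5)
Your proof is correct, and it is essentially the standard argument: the paper itself does not prove Lemma \ref{crit} but defers to \cite[Criterion 4.2]{LPSZ} and \cite[Lemma 3.7]{HLS}, whose proofs proceed exactly as yours does, by left-nesting the quasi-triangle inequality to get the weighted tail estimate $\|s_n-s_m\|\leq\sum_{k=m+1}^{n}C_Z^{k}\|x_k\|$, invoking completeness, and then bounding the norm of the limit by a limiting argument that avoids an extra factor of $C_Z$. You are also right that the displayed equality in the statement is a typo for the inequality $\leq$; this is how the lemma is stated in the cited sources and how it is actually used in the proof of Theorem \ref{thm_complete}.
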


\begin{thm}\label{thm_complete}
	Pietsch correspondence $(E(\cM,\tau),\|\cdot\|_{E(\cM,\tau)})\leftrightarrows (E(\Z),\|\cdot\|_{E(\Z)})$ of Theorem \ref{thm_corr_nc_norms} preserves completeness. That is, if either $(E(\cM,\tau),\|\cdot\|_{E(\cM,\tau)})$ or $(E(\Z), \|\cdot\|_{E(\Z)})$ is complete, then the associated space is complete too.
\end{thm}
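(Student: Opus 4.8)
The plan is to prove the two implications of the theorem separately, exploiting that, by the correspondence of Theorem \ref{thm_corr_nc_norms}, the map $\cD$ is a \emph{linear} norm-preserving injection $E(\Z)\to E(\cM,\tau)$ (so $\|\cD x\|_{E(\cM,\tau)}=\|x\|_{E(\Z)}$), while $\Phi$ is a (nonlinear) norm-preserving map $E(\cM,\tau)\to E(\Z)$ (so $\|\Phi X\|_{E(\Z)}=\|X\|_{E(\cM,\tau)}$). Throughout I would use that both spaces embed continuously into complete ambient spaces: $E(\cM,\tau)\hookrightarrow(\smt,t_\tau)$ by Lemma \ref{embedding}, and $E(\Z)$ into $S(\Z)$ with coordinatewise convergence, the latter following from $\|x\|_{E(\Z)}=\|Dx\|_{E(0,\infty)}$ together with Lemma \ref{embedding}, since convergence in measure of the dyadic step functions $Dx$ forces coordinatewise convergence of the $x_n$.

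First I would dispatch the implication ``$E(\cM,\tau)$ complete $\Rightarrow E(\Z)$ complete'', which is the routine one. Given a Cauchy sequence $\{x^{(m)}\}$ in $E(\Z)$, linearity and norm-preservation of $\cD$ make $\{\cD x^{(m)}\}$ Cauchy in $E(\cM,\tau)$, hence convergent to some $A\in E(\cM,\tau)$; by Lemma \ref{embedding}, $\cD x^{(m)}=\iota D x^{(m)}\to A$ in $t_\tau$. Transporting through the trace-preserving isomorphism $\iota$ and using that the class of functions constant on the dyadic intervals $[2^n,2^{n+1})$ is closed under convergence in measure, I would identify $\iota^{-1}A$ with $Dx$ for the sequence $x\in S(\Z)$ obtained as the coordinatewise limit $x_n=\lim_m x^{(m)}_n$. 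Then $A=\cD x\in E(\cM,\tau)$ forces $x\in E(\Z)$ via Theorem \ref{thm_corr_nc}, and $\|x^{(m)}-x\|_{E(\Z)}=\|\cD x^{(m)}-A\|_{E(\cM,\tau)}\to0$ by linearity of $\cD$.

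The substantial direction is ``$E(\Z)$ complete $\Rightarrow E(\cM,\tau)$ complete'', where the nonlinearity of $\Phi$ blocks the previous argument. Here I would start from a Cauchy sequence $\{X^{(m)}\}$ in $E(\cM,\tau)$, pass (via Lemma \ref{embedding} and completeness of $(\smt,t_\tau)$) to its $t_\tau$-limit $X\in\smt$, and extract a subsequence with differences $Y_j:=X^{(j+1)}-X^{(j)}$ so rapidly decaying that $\sum_j (C_{E(\Z)}\,\|S_+\|)^{j}\|Y_j\|_{E(\cM,\tau)}<\infty$. Since $X-X^{(m)}=\sum_{j\ge m}Y_j$ converges in $t_\tau$, Lemma \ref{un_maj} (with $\lambda=2$) yields $X-X^{(m)}\lhd 2\sum_{j\ge m}\sigma_{2^{\,j-m+1}}\mu(Y_j)=:B_m$. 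The key elementary observation is that uniform submajorisation transfers to a coordinatewise shift estimate: specialising \eqref{uniform majorization func} to $a=2^{n-1}$, $b=2^{n+1}$ and using monotonicity of $\mu$ gives $\mu(2^{n+1},A)\le\tfrac32\mu(2^{n-1},B)$ whenever $A\lhd B$, that is $\Phi A\le\tfrac32 S_+^2\Phi B$. Applying this with $A=X-X^{(m)}$, $B=B_m$ and computing $\Phi B_m=2\sum_{j\ge m}S_+^{\,j-m+1}\Phi Y_j$ (valid since $B_m$ is decreasing), I would invoke Lemma \ref{crit} in the complete space $E(\Z)$: the weighted-norm summability above guarantees that this series converges in $E(\Z)$ and that $\|\Phi B_m\|_{E(\Z)}\to0$ as $m\to\infty$. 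Solidity and boundedness of $S_+$ on $E(\Z)$ (Definition \ref{def_si_normed}) then give $\Phi(X-X^{(m)})\le\tfrac32 S_+^2\Phi B_m\in E(\Z)$, whence $X-X^{(m)}\in E(\cM,\tau)$ by Theorem \ref{thm_corr_nc}, so $X\in E(\cM,\tau)$, and $\|X-X^{(m)}\|_{E(\cM,\tau)}=\|\Phi(X-X^{(m)})\|_{E(\Z)}\to0$; the full Cauchy sequence then converges to $X$.

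I expect the main obstacle to be exactly this second direction, namely reconciling the nonlinearity of $\Phi$ with the linear structure needed for summation. The device that resolves it is the passage, through Lemma \ref{un_maj} and the shift estimate $\Phi A\le\tfrac32 S_+^2\Phi B$, from the operator-level Cauchy differences to a genuinely summable series of shifted decreasing sequences in $E(\Z)$, where completeness is harnessed via Lemma \ref{crit}. Two technical points I would check carefully are that $B_m$ really defines an element of $S(0,\infty)$ (equivalently $\Phi B_m\in S(\Z)$), which follows a posteriori from the $E(\Z)$-convergence of $\sum_{j\ge m}S_+^{\,j-m+1}\Phi Y_j$ together with monotone convergence of the partial sums of $B_m$, and the standard fact that a Cauchy sequence possessing a norm-convergent subsequence converges as a whole.
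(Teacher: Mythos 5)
Your proposal is correct and, in both directions, follows essentially the same route as the paper's own proof. The direction ``$E(\cM,\tau)$ complete $\Rightarrow E(\Z)$ complete'' is the paper's argument (there run through $E(0,\infty)$ and the limit function $f=Dx$) transported through $\iota$; the substantial direction uses exactly the paper's three pillars: rapid subsequence extraction, uniform majorisation of the tail sums via Lemma \ref{un_maj}, and summation of the shifted sequences $\sum_k S_+^{k}\Phi Y_{k+h}$ in the complete space $E(\Z)$ via Lemma \ref{crit}, followed by solidity and boundedness of $S_+$. Your packaged transfer estimate ($A\lhd B$ with $\lambda=2$ implies $\Phi A\le \tfrac32 S_+^2\Phi B$, obtained by specialising the majorisation inequality to $a=2^{n-1}$, $b=2^{n+1}$) is a cleaner form of the paper's in-line computation, which integrates the majorant over $[2^{n-1},2^{n+1}]$ and splits over the two dyadic subintervals to obtain $\bigl(\Phi\bigl(\sum_{k>h}Y_k\bigr)\bigr)_{n+1}\le (S_+a^{(h)})_n+\tfrac12(a^{(h)})_n$; the two estimates differ only in shifts and constants, which are immaterial here.

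The one place where your write-up, as stated, would not survive scrutiny is the verification of the hypothesis of Lemma \ref{un_maj}: the lemma requires that $\sum_{j\ge m}\sigma_{2^{j-m+1}}\mu(Y_j)$ converge \emph{in the measure topology} on $S(0,\infty)$, and you propose to obtain this ``a posteriori'' from the $E(\Z)$-convergence of $\sum_{j\ge m}S_+^{j-m+1}\Phi Y_j$ together with ``monotone convergence of the partial sums of $B_m$''. Pointwise monotone convergence alone does not give $t_\tau$-convergence in $S(0,\infty)$ (for instance $\chi_{[0,N]}\uparrow\mathbf{1}$ pointwise, but not in the measure topology, since the differences are indicators of sets of infinite measure), so this step needs an actual argument. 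Two repairs are available. Either dominate the increments pointwise,
\begin{equation*}
0\le \sum_{N<j\le N'}\sigma_{2^{j-m+1}}\mu(Y_j)\le D\Bigl(\sum_{j>N}S_+^{j-m+1}\Phi Y_j\Bigr),
\end{equation*}
using $\mu(Y_j)\le D\Phi Y_j$ and $DS_+^k=\sigma_{2^k}D$, and note that the right-hand sides tend to $0$ in measure because the tails of the sequence series tend to $0$ in the $E(\Z)$-norm (Lemma \ref{crit}) and the embedding of Lemma \ref{embedding} is continuous; this makes the partial sums Cauchy, hence convergent, in the complete space $S(0,\infty)$. Or, as the paper does, show directly that the partial sums of $\sum_j\sigma_{2^{j-m+1}}\mu(Y_j)$ are Cauchy in the $E$-norm, using the same weighted geometric estimates (with the dilation bound \eqref{norm_sigma}) that you already employ for Lemma \ref{crit}, and again conclude measure convergence from Lemma \ref{embedding}. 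With this point repaired, your proof is complete and coincides with the paper's.
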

\begin{proof}
	Suppose firstly that $(E(\Z),\|\cdot\|_{E(\Z)})$ is a complete $\Delta$-normed shift-monotone space. 
	Let $\{X_k\}_{k=1}^\infty$ be a Cauchy sequence in $E(\cM,\tau)$. By Theorem \ref{thm_corr_nc_norms} the pair  $(E(\cM,\tau), \|\cdot\|_{E})$ is a symmetric  $\Delta$-normed  space on $\cM$. Hence, Lemma \ref{embedding} implies
	that the embedding $E(\cM,\tau)\subset S(\cM,\tau)$ is continuous, and therefore, there exists $X\in S(\cM,\tau)$ such that $X_k\to X$ with respect to the measure topology.
	
	Since $\{X_k\}_{k=1}^\infty$ is a Cauchy sequence, for every $k\in\N$ we can choose $h_k$ such that
	\begin{equation}\label{estim_Cauchy}
		\|X_h-X_{h_k}\|\leq (2C_{E(\Z)}\|S_+\|)^{-2k}\leq (2C_{E(\cM,\tau)})^{-2k}
	\end{equation} for $h\geq h_k$, where the second inequality follows from the fact that $C_{E(\cM,\tau)}\leq C_{E(\Z)}\|S_+\|$ (see \eqref{eq_mod_conc}). We set
	$$Y_k=X_{h_{k+1}}-X_{h_k}, \ k=1,2,...$$
	It follows that the sequence of partial sums of $\sum_{k=1}^\infty Y_k$ is a Cauchy sequence in $(E(\cM,\tau), \|\cdot\|_{E(\cM,\tau)})$. Hence, it is a Cauchy sequence in measure topology on $S(\cM,\tau)$,
	and therefore, the series $\sum_{k=1}^\infty Y_k$ converges in the measure topology to $X-X_{h_1}$.
	
	First we claim, that for every fixed $h\in\N$, the series $\sum_{k=h+1}^\infty \sigma_{2^{k-h}}\mu(Y_k)$ converges with respect to the measure topology on $S(0,\infty)$.
	
	For $h<m$ we have 
	\begin{align*}
		\Big\|\sum_{k=h+1}^m \sigma_{2^{k-h}}\mu(Y_k)\Big\|_{E}&\leq \sum_{k=h+1}^mC_{E}^k\|\sigma_{2^{k-h}}\mu(Y_k)\|_{E}\\
		&\stackrel{\eqref{norm_sigma}}{\leq} \sum_{k=h+1}^mC_{E}^k(2C_{E})^{k-h}\|Y_k\|_{E}\\
		&\stackrel{\eqref{estim_Cauchy}}{\leq}\sum_{k=h+1}^mC_{E}^k(2C_{E})^{k-h}(2C_{E})^{-2k}\\
		&=(2C_{E})^{-h}\sum_{k=h+1}^m2^{-k}\to (4C_{E})^{-h} \text{ as } m\to\infty.
	\end{align*}
	Hence, using again Lemma \ref{embedding}, we obtain that  for every fixed $h\in\N$ the series $\sum_{k=h+1}^\infty \sigma_{2^{k-h}}\mu(Y_k)$ converges with respect to the measure topology.
	By Lemma~\ref{un_maj} we have that
	\begin{align}\label{req_maj}
		\sum_{k=h+1}^\infty Y_k=\sum_{k=1}^\infty Y_{k+h}\triangleleft 2 \sum_{k=1}^\infty \sigma_{2^k}\mu(Y_{k+h})= 2 \sum_{k=h+1}^\infty \sigma_{2^{k-h}}\mu(Y_{k}).
	\end{align}
	
	Secondly, we claim that for every $h\in\N$ the series $\sum_{k=h+1}^\infty S_+^{k-h}\Phi(Y_k)=\sum_{k=1}^\infty S_+^{k}\Phi(Y_{k+h})$ converges in $E(\Z)$ and $\|\sum_{k=h+1}^\infty S_+^{k-h}\Phi(Y_k)\|_{E(\Z)}\to 0$ as $h\to \infty$.
	We have
	\begin{align*}
		\|S_+^{k}\Phi(Y_{k+h})\|_{E(\Z)}&\leq \|S_+\|^{k}\|\Phi(Y_{k+h})\|_{E(\Z)}= \|S_+\|^{k}\|Y_{k+h}\|_{E(\cM,\tau)}\\
		&\stackrel{\eqref{estim_Cauchy}}{\leq} \frac{\|S_+\|^{k}}{(2C_{E}\|S_+\|)^{2k+2h}}\leq(2C_{E})^{-2k-2h}.
	\end{align*}
	Hence,
	$$\sum_{k=1}^\infty C_E^k\|S_+^{k}\Phi(Y_{k+h})\|_E\leq  (2C_E)^{-2h}\sum_{k=1}^\infty (4C_E)^{-k},$$
	and therefore, by Lemma \ref{crit} the series
	\begin{align}\label{def_a_h}
		a^{(h)}:=\sum_{k=h+1}^\infty S_+^{k-h}\Phi(Y_k)
	\end{align} converges in $E(\Z)$ and
	\begin{align}\label{norm_a_h}
		\|a^{(h)}\|_{E(\Z)}=\Big\|\sum_{k=h+1}^\infty S_+^{k-h}\Phi(Y_k)\Big\|_{E(\Z)}\leq (2C_E)^{-2h}\sum_{k=1}^\infty (4C_E)^{-k}\to 0
	\end{align}
	as $h\to \infty$.
	
	Now, using \eqref{req_maj} and \eqref{def_a_h} for every $n\in\Z$ we have
	\begin{align*}
		\Big(\Phi \Big(\sum_{k=h+1}^\infty Y_k\Big)\Big)_{n+1}&=\mu\Big(2^{n+1},\sum_{k=h+1}^\infty Y_k\Big)\leq 
		\frac1{2^{n}}\int_{2^{n}}^{2^{n+1}}\mu(s,\sum_{k=h+1}^\infty Y_k)ds\\
		&\stackrel{\eqref{req_maj}}{\leq} \frac1{2^{n-1}}\int_{2^{n-1}}^{2^{n+1}}\sum_{k=h+1}^\infty
		\sigma_{2^{k-h}}\mu(s,Y_k)ds
		\\
		&
		=\sum_{k=h+1}^\infty \frac1{2^{n-k+h-1}}\int_{2^{n-k+h-1}}^{2^{n-k+h+1}}\mu(s,Y_k)ds\\
		&=\sum_{k=h+1}^\infty \frac1{2^{n-k+h-1}}\int_{2^{n-k+h-1}}^{2^{n-k+h}}\mu(s,Y_k)ds\\
		&+\sum_{k=h+1}^\infty \frac12\frac1{2^{n-k+h}}\int_{2^{n-k+h}}^{2^{n-k+h+1}}\mu(s,Y_k)ds\\
		&\leq\sum_{k=h+1}^\infty \big(\Phi Y_k\big)_{n-k+h-1}+\sum_{k=h+1}^\infty \frac12\big(\Phi Y_k\big)_{n-k+h}\\
		&=S_+\sum_{k=h+1}^\infty S_+^{k-h}\big(\Phi Y_k\big)_n+\frac12\sum_{k=h+1}^\infty S_+^{k-h}\big(\Phi Y_k\big)_n\\
		&\stackrel{\eqref{def_a_h}}{=}(S_+a^{(h)})_n+\frac12 (a^{(h)})_n.
	\end{align*}
	
	Thus, by the definition of the $\Delta$-norm of $E(\cM,\tau)$ and \eqref{norm_a_h} we have that
	$$\Big\|\sum_{k=h+1}^\infty Y_k\Big\|_{E(\cM,\tau)}=\Big\|\Phi\Big( \sum_{k=h+1}^\infty Y_k\Big)\Big\|_{E(\Z)}\leq (\|S_+\|+\frac12)\|a^{(h)}\|_{E(\Z)}\to  0$$
	as $h\to\infty$. That is, the series $\sum_{k=1}^\infty Y_k$ converges in $(E(\cM,\tau),\|\cdot\|_{E})$.
	Since $\sum_{k=h+1}^\infty Y_k=X-X_{m_{h+1}}$, it follows that the subsequence $\{X_{m_{h}}\}_{h=1}^\infty$ converges to $X$ in $E(\cM,\tau)$. Since a Cauchy sequence $\{X_n\}_{n=1}^\infty$ in $E(\cM,\tau)$ has a
	convergent subsequence, it converges in $E(\cM,\tau)$. Thus, $(E(\cM,\tau),\|\cdot\|_{E})$ is complete.

	Conversely, assume that $ E(\cM,\tau)$ is a complete symmetric $\Delta$-normed operator space on $\cM$. It follows from the definition of $E(0,\infty)$ that the space $(E(0,\infty),\|\cdot\|_E)$ is a complete symmetric $\Delta$-normed function space.
	Let $\{x^{(n)}\}_{n=1}^\infty$ be a Cauchy sequence in $E(\Z).$ Therefore, the sequence $\{Dx^{(n)}\}_{n=1}^\infty$ is a Cauchy sequence in $E(0,\infty)$. Since $E(0,\infty)$ is complete, there exists $f\in E(0,\infty)$ such that $Dx^{(n)}\to f$ in
	$E(0,\infty)$. By Lemma \ref{embedding}, $Dx^{(n)}\to f$ with respect to the measure topology on $S(0,\infty)$, in particular, $x_k^{(n)}=Dx^{(n)}\chi_{[2^k,2^{k+1})}\to f\chi_{[2^k,2^{k+1})}$ for every fixed $k\in \Z$.
	Setting $x_k=\lim_{n\to\infty}x_k^{(n)}$, we obtain that $f=\sum_{k\in\Z}x_k\chi_{[2^k,2^{k+1})}.$ Hence, we have that $x=\{x_k\}_{k=1}^\infty\in E(\Z)$ (since $Dx=f\in E(0,\infty)$) and
	$$\|x^{(n)}-x\|_{E(\Z)}=\|Dx^{(n)}-Dx\|_{E(\Z)}=\|Dx^{(n)}-f\|_{E(\Z)}\to 0.$$
	Thus $(E(\Z),\|\cdot\|_{E})$ is a complete space.
\end{proof}



\end{document}